\documentclass[11pt]{amsart}

\usepackage[utf8]{inputenc}
\usepackage[T1]{fontenc}

\usepackage{amsmath,amssymb,amsfonts,textcomp,amsthm,xifthen,graphicx,color,pgfplots}

\usepackage{enumerate}
\usepackage{enumitem}

\usepackage{pgfplots}
\usepgfplotslibrary{groupplots}
\usepgfplotslibrary{colorbrewer}
\pgfplotsset{compat=newest}
\usepgfplotslibrary{external}
\tikzexternalize

\usepackage{fullpage}

\usepackage[utf8]{inputenc}

\usepackage[pdftex,
  pdfauthor={Thomas F\"uhrer and Diego Paredes},
            pdftitle={Hybrid methods for reaction-dominated diffusion problem},
            ]{hyperref}

\newtheorem{theorem}{Theorem}

\newtheorem{lemma}[theorem]{Lemma}
\newtheorem{corollary}[theorem]{Corollary}
\newtheorem{proposition}[theorem]{Proposition}

\newcommand{\uhcg}{u_h^\mathrm{cG}}


%

\DeclareMathOperator{\linhull}{span}


\newcommand{\ip}[2]{(#1\hspace*{.5mm},#2)}
\newcommand{\dual}[2]{\langle#1\hspace*{.5mm},#2\rangle}

\newcommand{\diam}{\mathrm{diam}}

\newcommand\curl{\operatorname{curl}}

\def\div{{\rm div\,}}

\newcommand{\jump}[1]{[#1]}

\newcommand{\Hdivset}[1]{\boldsymbol{H}(\div;#1)}

\newcommand{\set}[2]{\big\{#1\,:\,#2\big\}}

\newcommand{\pwnabla}{\operatorname{\nabla_{\cT}}}
\newcommand{\pwdiv}{\operatorname{div_{\cT}}}

\newcommand{\RT}{\ensuremath{\boldsymbol{RT}}}

\newcommand{\R}{\ensuremath{\mathbb{R}}}
\newcommand{\N}{\ensuremath{\mathbb{N}}}

\newcommand{\cT}{\ensuremath{\mathcal{T}}}
\newcommand{\cF}{\ensuremath{\mathcal{F}}}
\newcommand{\cE}{\ensuremath{\mathcal{E}}}

\newcommand{\OO}{\ensuremath{\mathcal{O}}}

\newcommand{\normal}{\ensuremath{{\boldsymbol{n}}}}
\newcommand{\tangential}{\ensuremath{{\boldsymbol{t}}}}

\newcommand{\cV}{\ensuremath{\mathcal{V}}}


\newcommand{\bsigma}{{\boldsymbol\sigma}}
\newcommand{\eeta}{{\boldsymbol\eta}}
\newcommand{\btau}{{\boldsymbol\tau}}

\newcommand{\bq}{{\boldsymbol{q}}}
\newcommand{\bx}{{\boldsymbol{x}}}


\newcommand{\tr}{\operatorname{tr}}

\newcommand{\trdiv}{\tr^{\div}}
\newcommand{\trnabla}{\tr^{\nabla}}

\newcommand{\est}{\operatorname{est}}

\begin{document}

\title{Robust hybrid finite element methods for reaction-dominated diffusion problems}
\date{\today}

\author{Thomas F\"uhrer}
\address{Facultad de Matem\'{a}ticas, Pontificia Universidad Cat\'{o}lica de Chile, Santiago, Chile}
\email{tofuhrer@mat.uc.cl}
\author{Diego Paredes}
\address{Departamento de Ingenier\'ia Matem\'atica  and CI$^2$MA,  Universidad de Concepci\'on, Concepci\'on, Chile}
\email{dparedes@udec.cl}

\thanks{{\bf Acknowledgment.} 
This work was supported by ANID through FONDECYT project 1210391 (TF) and FONDECYT project 1181572 (DP).  Parts of the presented research have been conducted while DP was visiting the mathematics faculty of Pontificia Universidad Cat\'{o}lica de Chile during December 2023.}

\keywords{Singularly perturbed problem, reaction-diffusion, hybrid method, Lagrange multipliers, a posteriori, DPG method}
\subjclass[2010]{65N30, 
                 65N12 
                 }
\begin{abstract}
  For a reaction-dominated diffusion problem we study a primal and a dual hybrid finite element method where weak continuity conditions are enforced by Lagrange multipliers. 
  Uniform robustness of the discrete methods is achieved by enriching the local discretization spaces with modified face bubble functions which decay exponentially in the interior of an element depending on the ratio of the singular perturbation parameter and the local mesh-size.
  A posteriori error estimators are derived using Fortin operators. They are robust with respect to the singular perturbation parameter.
  Numerical experiments are presented that show that oscillations, if present, are significantly smaller then those observed in common finite element methods.
\end{abstract}
\maketitle

\section{Introduction}
\noindent
The aim of this article is to study robust hybrid finite element methods for the reaction-diffusion problem
\begin{subequations}\label{eq:model}
\begin{alignat}{2}
  -\varepsilon^2 \Delta u + u &= f &\quad&\text{in }\Omega,\label{eq:model:a} \\
  u &= 0 &\quad&\text{on } \Gamma :=\partial\Omega,
\end{alignat}
\end{subequations}
where $f\in L_2(\Omega)$ and $\Omega\subseteq \R^d$ ($d=2,3$) denotes an open Lipschitz domain with polygonal/polyhedral boundary $\Gamma = \partial\Omega$ and $0<\varepsilon\leq 1$.
We note that~\eqref{eq:model} admits a unique weak solution $u\in H_0^1(\Omega)$.
Particularly, the reaction-dominated regime where $0<\varepsilon\ll 1$ is of interest and poses several challenges for numerical methods. 
Indeed, discretizing the weak formulation with piecewise polynomials that are continuous on $\Omega$ gives approximations with non-physical oscillation phenomena in the region of the boundary (or interior) layer. 
For a detailed description and numerical treatment of singularly perturbed problems we refer the interested reader to~\cite{RoosStynesTobiska96}.
Different numerical methods for problem~\eqref{eq:model} have been studied extensively, see, e.g.,~\cite{LinStynes12,DPGrefusion,CaiKu20,MR2452853,MR2583792,MR2557053,MR2454526,MR3358551,MR2164094}, to name a few. 

Finite element analysis based on layer-adapted meshes is popular, cf.~\cite{RoosStynesTobiska96}, but needs a priori knowledge of the location of layers. 
An alternative are adaptive methods driven by robust a posteriori error estimators, see, e.g.~\cite{VerfReactDiff}.
Other finite element methods are based on the use of balanced norms see, e.g.,~\cite{LinStynes12,DPGrefusion,CaiKu20}. In~\cite{LinStynes12} the authors propose a finite element method that is inspired by least-squares finite element methods and uses the primal variable $u$ and an additional flux variable.
The authors of~\cite{DPGrefusion} propose and analyze a discontinuous Petrov--Galerkin method (DPG) with optimal test functions for an ultra-weak formulation of~\eqref{eq:model} and show that their method is robust with respect to a balanced norm.

On the other hand, \cite{MR0701094} introduced the idea to modify the approximation spaces by using the differential operator of the problem to define improved basis functions.  This concept marked a turning point in the field and initiated a highly productive avenue of research for instance  \cite{MR1291139},~\cite{MR2203943} and~\cite{MR2863007}.

Furthermore, in~\cite{CaiKu20} a dual formulation of~\eqref{eq:model} is derived that allows to eliminate the primal variable from the system.   
In this work (see \eqref{eq:dualdglm}) we introduce a  hybrid FEM based on a similar idea but with a smaller number of degrees of freedom.

Hybrid finite element methods (HFEM) are popular as they allow to relax constraints on the discrete trial/test functions, e.g., alleviate interelement continuities, see, e.g.,~\cite{RT77}.
This feature is particularly useful for discretizations of fourth-order problems~\cite{Pian72}.
A type of HFEM for singularly perturbed problems has been studied in~\cite{MR2640986} and \cite{MR3336297} presents a multiscale finite element method based on the hybrid formulation of \eqref{eq:model} with an approximation space inspired by the basis functions defined in~\cite{MR2863007}.  
We note that our proposal differs from those in \cite{MR2640986} and \cite{MR3336297} in the definition of the approximation spaces. Specifically, we have designed the basis functions to demonstrate the robustness of the resulting numerical method with respect to small values of $\varepsilon$.

\subsection{Novel contributions}
In the work at hand we study a primal and a dual HFEM.
The first one is derived by multiplying~\eqref{eq:model:a} by some discontinuous test function and by introducing a Lagrange multiplier $\lambda$ (formally, $\lambda$ is the normal trace of $\varepsilon \nabla u$ on each element boundary). The resulting equivalent formulation is of mixed form: Find $(u,\lambda)\in U\times \Lambda$ such that
\begin{alignat*}{2}
  &a(u,v) + b(\lambda,v) &\,=\,& \ip{f}{v}_\Omega, 
  \\
  &b(\mu,u) &\,=\,&0
\end{alignat*}
for all $(v,\mu)\in U\times \Lambda$. Here, $a(u,v) = \varepsilon^2\ip{\pwnabla u}{\pwnabla v}_\Omega + \ip{u}v_\Omega$ and $b(\lambda,v) = -\varepsilon\dual{\lambda}{v}_{\partial\cT}$, and $\cT$ denotes a shape-regular mesh of simplices.
For a precise definition of the spaces and bilinear forms we refer to Section~\ref{sec:dglm}.
We replace $U\times \Lambda$ by some finite element spaces $U_h\times \Lambda_h$. 
By the theory of mixed finite element methods~\cite{BoffiBrezziFortin} the pair $(U_h,\Lambda_h)$ needs to satisfy the discrete $\inf$--$\sup$ condition
\begin{align}\label{eq:discreteInfSup}
  \inf_{0\neq \lambda_h\in \Lambda_h}\sup_{0\neq v_h\in U_h} \frac{b(\lambda_h,v_h)}{\|\lambda_h\|_{\Lambda,\varepsilon}\|v_h\|_{U,\varepsilon}} \geq \gamma_h >0
\end{align}
to ensure solvability of the discretized mixed scheme. 

For quasi-optimality results (and others) with constants independent of the mesh-size and the singular perturbation parameter, one also requires that $\gamma_h$ does not deteriorate when $h\to 0$ or $\varepsilon\to 0$.
Choosing $\Lambda_h$ to be a piecewise polynomial space the last requirement (i.e., $\gamma_h$ independent of $\varepsilon$) is not satisfied if $U_h = P^p(\cT)$  (the space of piecewise polynomials of degree $\leq p$ on the shape-regular mesh of simplices $\cT$) and $\varepsilon\ll h$. 
To be more precise, following the analysis from~\cite{FuehrerHeuerFortin} one can prove (details not shown here) that $\gamma_h\eqsim \min\{1,\sqrt{\varepsilon/h}\}$. 
This means that on coarse meshes ($h\eqsim 1$) and small values of the perturbation parameter ($\varepsilon\ll 1$), the $\inf$--$\sup$ constant is small, yielding possible non-trustworthy approximations.
To avoid dependence on the ratio $\varepsilon/h$ we follow the recent work~\cite{FuehrerHeuerFortin} and choose $U_h$ to be a polynomial space enriched with modified face bubble functions. These are defined to be equal to polynomial face bubble functions on the boundary of elements but decay exponentially in the interior of elements depending on the (local) ratio $\varepsilon/h$, see Figure~\ref{fig:bubbles} for a visualization. 
Their use allows to prove existence of a Fortin operator which is bounded independently of $\varepsilon$ and $h$ and consequently $\gamma_h \eqsim 1$. 

For our second method we consider the flux given by $\varepsilon\nabla u = \bsigma$ as independent variable. This allows to eliminate the primal variable from the system. Details on this method and its derivation are found below (Section~\ref{sec:dualdglm}). 
We stress that~\cite{CaiKu20} propose and thoroughly analyze a continuous Galerkin method based on the flux variable (with a different defined flux $\bsigma = \nabla u$).

For both proposed methods we derive and study a posteriori error estimators that are robust with respect to $\varepsilon$ and can be used to steer adaptive mesh-refinements. 
The error indicators are defined by using Fortin operators with additional orthogonality properties. 
We show that the estimators are reliable (for the first estimator under the assumption that $\Omega$ is convex) and locally efficient up to best approximation terms and data oscillations. 
For one of the efficiency estimates we employ the usual bubble function technique~\cite{Verf94} but replace the polynomial face bubble functions by the modified ones, see Lemma~\ref{lem:effTraceSigma}.

By taking local Schur complements our methods reduce to efficient numerical methods as the dimension of the system is equal to the dimension of the discrete Lagrange multiplier space. 

\subsection{Overview}
The remainder of this work is organized as follows: 
In Section~\ref{sec:analysis} we introduce some notation, spaces and the HFEMs together with an analysis thereof. 
Section~\ref{sec:stability} discusses discretization and conditions on the discrete spaces to ensure stability. 
In particular, we introduce the modified face bubble functions mentioned above. 
An a posteriori error estimator for each of the methods is derived in Section~\ref{sec:apost}. 
This work is concluded by numerical experiments presented in Section~\ref{sec:numeric}.

Throughout this article by $A\lesssim B$ we abbreviate $A \leq C\cdot B$ for $A,B>0$ and $C>0$ a constant independent of $\varepsilon$ and the mesh (except possible its shape-regularity) and other quantities of interest. 
In the same spirit we write $A\eqsim B$ if $A\lesssim B$ and $B\lesssim A$ hold simultaneously.

\section{Derivation and abstract analysis of the hybrid methods}\label{sec:analysis}
In this section we derive and analyze our two numerical schemes, see Section~\ref{sec:mixedLM} and~\ref{sec:dglm} for the primal hybrid FEM (PHFEM) and Section~\ref{sec:dualmixedLM} and~\ref{sec:dualdglm} for the dual hybrid FEM (DHFEM).
Before that we introduce some notation, spaces and auxiliary results in Section~\ref{sec:spaces}.

\subsection{Sobolev spaces and traces}\label{sec:spaces}
For $T\subset \R^d$ a Lipschitz domain let
\begin{align*}
  H^1(T) &= \set{v\in L_2(T)}{\nabla v\in L_2(T)^d}, \\
  \Hdivset{T} &= \set{\btau\in L_2(T)^d}{\div\btau \in L_2(T)},
\end{align*}
with trace spaces $H^{1/2}(\partial T)$ and $H^{-1/2}(\partial T)$, respectively. 
The duality between $H^{-1/2}(\partial T)$ and $H^{1/2}(\partial T)$ is denoted by $\dual{\mu}{v}_{\partial T}$ and extends the $L_2(\Gamma)$ scalar product. 
The canonic inner product on $L_2(T)$ is denoted by $\ip{\cdot}\cdot_T$ with induced norm $\|\cdot\|_T$.

The normal trace operator $\trdiv_T\colon \Hdivset{T}\to \big(H^1(T)\big)'$ is given by
\begin{align*}
  \dual{\trdiv_T\btau}{v}_{\partial T} = \ip{\div\btau}{v}_T + \ip{\btau}{\nabla v}_T \quad\forall \btau\in \Hdivset{T}, v\in H^1(T).
\end{align*}
Note that for sufficiently smooth $\btau$ we can identify this operator with $\btau\cdot\normal_T$ with $\normal_T$ being the (exterior) normal on $\partial T$. Thus, we may use $\btau\cdot\normal_T$ instead of $\trdiv_T\btau$ from this point on.

We equip spaces $H^1(T)$ and $\Hdivset{T}$ with the parameter dependent norms ($0<\varepsilon\leq 1$), 
\begin{align*}
  \|v\|_{1,\varepsilon,T}^2 = \|v\|_T^2 + \varepsilon^2 \|\nabla v\|_T^2, \qquad
  \|\btau\|_{\div,\varepsilon,T}^2 = \|\btau\|_T^2 + \varepsilon^2 \|\div\btau\|_T^2.
\end{align*}

Let $\cT$ denote a finite covering of $\Omega$ into Lipschitz domains, i.e., $\overline\Omega = \bigcup_{T\in\cT} \overline{T}$.
The broken variants of the spaces above are denoted by $U:=H^1(\cT)$, $\Sigma:=\Hdivset{\cT}$ and the global trace operators $\trdiv_{\cT}\colon \Hdivset\Omega\to \big(H^1(\cT)\big)'$ resp. $\trnabla_{\cT}\colon H^1(\Omega)\to \big(\Hdivset\cT\big)'$ are given by
\begin{align*}
  \dual{\trdiv_{\cT}\btau}{v}_{\partial \cT} &= \sum_{T\in\cT} \dual{\trdiv_T(\btau|_T)}{v}_{\partial T}
  \quad\forall\btau\in\Hdivset{\Omega},v\in H^1(\cT), \quad\text{resp.}\\
  \dual{\trnabla_{\cT}v}{\btau}_{\partial \cT} &= \sum_{T\in\cT} \dual{\trnabla_Tv}{\btau}_{\partial T} := \sum_{T\in\cT}\dual{\btau\cdot\normal_T}{v}_{\partial T}
  \quad\forall v\in H^1(\Omega), \, \btau\in \Hdivset\cT.
\end{align*}
Furthermore, let $\Lambda := H^{-1/2}(\cT) = \trdiv_{\cT}(\Hdivset\Omega)$ and $W := H^{1/2}_{00}(\cT) = \trnabla_{\cT}(H_0^1(\Omega))$. 
We equip the latter two spaces with their respective minimum energy norms
\begin{align*}
  \|\lambda\|_{\Lambda,\varepsilon} &= \inf\set{\|\btau\|_{\div,\varepsilon,\Omega}}{\trdiv_{\cT}\btau = \lambda}, \\
  \|w\|_{W,\varepsilon} &= \inf\set{\|v\|_{1,\varepsilon,\Omega}}{\trnabla_{\cT}v = w}.
\end{align*}
which makes $\Lambda$ resp. $W$ a Banach space.
Norms on the broken spaces are denoted by
\begin{align*}
  \|v\|_{U,\varepsilon}^2 := \|v\|_{1,\varepsilon,\cT}^2 := \sum_{T\in\cT} \|v\|_{1,\varepsilon,T}^2,
  \qquad
  \|\btau\|_{\Sigma,\varepsilon}^2 := \|\btau\|_{\div,\varepsilon,\cT}^2 := \sum_{T\in\cT} \|\btau\|_{\div,\varepsilon,T}^2
\end{align*}

The trace operators $\trdiv_{\cT}$ and $\trnabla_\cT$ are bounded, i.e., 
\begin{align*}
  |\varepsilon\dual{\trdiv_{\cT}\btau}{v}_{\partial\cT}| = |\ip{\varepsilon\div\btau}{v}_\Omega+\ip{\btau}{\varepsilon\pwnabla v}|
  \leq \|\btau\|_{\div,\varepsilon,\Omega}\|v\|_{1,\varepsilon,\cT} = \|\btau\|_{\div,\varepsilon,\Omega}\|v\|_{U,\varepsilon}
\end{align*}
for all $\btau\in \Hdivset\Omega$, $v\in U=H^1(\cT)$ and 
\begin{align*}
|\varepsilon\dual{\trnabla_\cT v}{\btau}_{\partial\cT}| = |\ip{\varepsilon\pwdiv\btau}v_\Omega + |\ip{\btau}{\varepsilon\nabla v}_\Omega|
\leq \|v\|_{1,\varepsilon,\Omega}\|\btau\|_{\Sigma,\varepsilon}
\end{align*}
for all $v\in H_0^1(\Omega)$, $\btau\in \Sigma=\Hdivset\cT$.
We define $\pwnabla \colon H^1(\cT)\to L_2(\Omega)^d$ by $(\pwnabla v)|_T = \nabla(v|_T)$ 
and $\pwdiv\colon \Hdivset\cT\to L_2(\Omega)$ by $(\pwdiv\btau)|_T = \div(\btau|_T)$ for all $T\in\cT$.

The next result relates norms $\|\cdot\|_{\Lambda,\varepsilon}$ and $\|\cdot\|_{W,\varepsilon}$ to  dual norms. 
The proof follows along the lines of techniques developed for the analysis of discontinuous Petrov--Galerkin methods (DPG), see, e.g.~\cite[Theorem~2.3]{BrokenSpaces}.
While~\cite{BrokenSpaces} considers non-singular perturbed problems, i.e., $\varepsilon=1$, their proof can be easily adapted to the present situation ($\varepsilon\neq 1$). We thus omit further details.
\begin{lemma}\label{lem:traceinfsup}
  For all $\lambda\in \Lambda$ and all $w\in W$ we have that
  \begin{align*}
    \|\lambda\|_{\Lambda,\varepsilon} &= \sup_{0\neq v\in U} \frac{\varepsilon\dual{\lambda}{v}_{\partial\cT}}{\|v\|_{U,\varepsilon}}, \\
    \|w\|_{W,\varepsilon} &= \sup_{0\neq \btau\in \Sigma} \frac{\varepsilon\dual{w}{\btau}_{\partial\cT}}{\|\btau\|_{\Sigma,\varepsilon}}.
  \end{align*}
\end{lemma}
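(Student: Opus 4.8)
The plan is to prove the two identities by a standard duality argument adapted to the $\varepsilon$-dependent norms, exploiting that $\Lambda$ and $W$ carry minimum-energy (quotient) norms. I focus on the first identity; the second is entirely analogous with the roles of $\Hdivset\Omega$ and $H_0^1(\Omega)$ interchanged.

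\textbf{Upper bound.} Fix $\lambda\in\Lambda$. By the boundedness of $\trdiv_\cT$ recorded just above the lemma, for any $\btau\in\Hdivset\Omega$ with $\trdiv_\cT\btau=\lambda$ and any $0\neq v\in U$ we have $|\varepsilon\dual{\lambda}{v}_{\partial\cT}|=|\varepsilon\dual{\trdiv_\cT\btau}{v}_{\partial\cT}|\leq\|\btau\|_{\div,\varepsilon,\Omega}\,\|v\|_{U,\varepsilon}$. Dividing by $\|v\|_{U,\varepsilon}$, taking the supremum over $v$, and then the infimum over all admissible $\btau$ yields
\begin{align*}
  \sup_{0\neq v\in U}\frac{\varepsilon\dual{\lambda}{v}_{\partial\cT}}{\|v\|_{U,\varepsilon}}\leq \inf\set{\|\btau\|_{\div,\varepsilon,\Omega}}{\trdiv_\cT\btau=\lambda}=\|\lambda\|_{\Lambda,\varepsilon}.
\end{align*}

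\textbf{Lower bound.} This is the substantive direction. Given the supremum on the right — call it $S$ — I want to produce a single $\btau^\star\in\Hdivset\Omega$ with $\trdiv_\cT\btau^\star=\lambda$ and $\|\btau^\star\|_{\div,\varepsilon,\Omega}\leq S$, which forces $\|\lambda\|_{\Lambda,\varepsilon}\leq S$. The natural candidate is the minimizer of $\|\cdot\|_{\div,\varepsilon,\Omega}$ over the affine subspace $\{\btau:\trdiv_\cT\btau=\lambda\}$ of the Hilbert space $\Hdivset\Omega$ equipped with the $\varepsilon$-inner product $\ip{\btau}{\btau'}_\Omega+\varepsilon^2\ip{\div\btau}{\div\btau'}_\Omega$; since this subspace is closed and nonempty (by definition of $\Lambda$), such a minimizer $\btau^\star$ exists and is characterized by orthogonality to the kernel, i.e. $\ip{\btau^\star}{\btau_0}_\Omega+\varepsilon^2\ip{\div\btau^\star}{\div\btau_0}_\Omega=0$ for every $\btau_0$ with $\trdiv_\cT\btau_0=0$. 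The key point, following the DPG-type argument of \cite[Theorem~2.3]{BrokenSpaces}, is to test with $v\in U$ defined elementwise so that on each $T$ one has $-\varepsilon^2\div(\varepsilon^{-2}\btau^\star)|_T$... more precisely, I pick $v$ piecewise as the Riesz representative making $\varepsilon\dual{\lambda}{v}_{\partial\cT}=\|\btau^\star\|_{\div,\varepsilon,\Omega}^2$ while $\|v\|_{U,\varepsilon}=\|\btau^\star\|_{\div,\varepsilon,\Omega}$: concretely, on each $T$ let $v|_T$ solve the local problem $-\varepsilon^2\Delta v|_T+v|_T=\varepsilon^{-2}(\varepsilon^2\div\btau^\star)$ type identity so that $\ip{\div\btau^\star}{v}_T+\ip{\btau^\star}{\nabla v}_T=\dual{\btau^\star\cdot\normal_T}{v}_{\partial T}$ reproduces the $\varepsilon$-norm. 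Then $S\geq \varepsilon\dual{\lambda}{v}_{\partial\cT}/\|v\|_{U,\varepsilon}=\|\btau^\star\|_{\div,\varepsilon,\Omega}\geq\|\lambda\|_{\Lambda,\varepsilon}$, and together with the upper bound we get equality.

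\textbf{Main obstacle.} The crux — and the only place any real work is needed — is constructing the test function $v$ that simultaneously realizes the pairing with $\lambda$ and whose $\|\cdot\|_{U,\varepsilon}$-norm is controlled by $\|\btau^\star\|_{\div,\varepsilon,\Omega}$; equivalently, showing that the minimum-energy extension norm on $\Lambda$ equals the relevant dual norm. This is exactly the content imported from \cite[Theorem~2.3]{BrokenSpaces}, where it is done for $\varepsilon=1$; the adaptation amounts to checking that the local Neumann/Riesz problems and the associated integration-by-parts identities go through verbatim with the weights $\varepsilon^2$ on the gradient/divergence terms, which they do since $\varepsilon$ is a fixed positive constant. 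Hence, following the excerpt's own remark, I would simply invoke that reference and omit the routine verification. The second identity is proved the same way, using the boundedness of $\trnabla_\cT$, the quotient norm on $W=\trnabla_\cT(H_0^1(\Omega))$, and minimizing $\|\cdot\|_{1,\varepsilon,\Omega}$ over $\{v\in H_0^1(\Omega):\trnabla_\cT v=w\}$.
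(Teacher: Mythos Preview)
Your proposal is correct and follows the same approach as the paper, which likewise simply defers the substantive lower-bound argument to \cite[Theorem~2.3]{BrokenSpaces} and remarks that the $\varepsilon$-weighting carries through unchanged. Your description of the local Riesz problem is a bit garbled (the clean version is: on each $T$ solve $-\varepsilon^2\Delta v|_T+v|_T=0$ with Neumann data $\varepsilon\,\partial_{\normal} v|_{\partial T}=\lambda$, so that $\btau^\star=\varepsilon\nabla v\in\Hdivset\Omega$ has trace $\lambda$ and $\|\btau^\star\|_{\div,\varepsilon,\Omega}=\|v\|_{U,\varepsilon}$), but since you explicitly defer to the reference for the details this is harmless.
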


The next result precisely characterizes continuity of elements in $U$ and $\Sigma$. It can be found in~\cite[Theorem~2.3]{BrokenSpaces}.
\begin{lemma}\label{lem:cont}
  Let $v\in U$ and $\btau\in \Sigma$ be given. The following equivalences hold:
  \begin{align*}
    v\in H_0^1(\Omega) &\quad\Longleftrightarrow\quad \dual{\mu}{v}_{\partial\cT} = 0 \quad\forall \mu\in \Lambda, \\
    \btau\in \Hdivset\Omega &\quad\Longleftrightarrow\quad \dual{w}{\btau\cdot\normal}_{\partial\cT} = 0 \quad\forall w\in W.
  \end{align*}
\end{lemma}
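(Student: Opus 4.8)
The plan is to prove the two equivalences in Lemma~\ref{lem:cont} separately, each by establishing both implications; by symmetry (integration by parts exchanging the roles of $H^1$ and $\Hdivset{}$) the two cases are structurally identical, so I describe the first one, $v\in H_0^1(\Omega)\Leftrightarrow \dual{\mu}{v}_{\partial\cT}=0$ for all $\mu\in\Lambda$, and indicate the change for the second.

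The forward implication is the easy direction: if $v\in H_0^1(\Omega)$ then for any $\mu\in\Lambda$ we may pick $\btau\in\Hdivset\Omega$ with $\trdiv_{\cT}\btau=\mu$, and then
\begin{align*}
  \dual{\mu}{v}_{\partial\cT} = \dual{\trdiv_{\cT}\btau}{v}_{\partial\cT} = \ip{\div\btau}{v}_\Omega + \ip{\btau}{\pwnabla v}_\Omega = \ip{\div\btau}{v}_\Omega + \ip{\btau}{\nabla v}_\Omega = 0,
\end{align*}
where the first equality uses that $v|_T\in H^1(T)$ and the elementwise definition of $\trdiv_{\cT}$, the middle step uses that $\pwnabla v=\nabla v$ as an $L_2(\Omega)^d$ function because $v\in H^1(\Omega)$ (no interelement jumps), and the last step is the definition of the global normal trace $\trdiv_\Omega\btau\in (H^1(\Omega))'$ together with $v\in H^1_0(\Omega)$, so $\dual{\btau\cdot\normal}{v}_\Gamma=0$. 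This shows $\dual{\mu}{v}_{\partial\cT}=0$ for all $\mu\in\Lambda$.

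For the converse, suppose $v\in U=H^1(\cT)$ satisfies $\dual{\mu}{v}_{\partial\cT}=0$ for all $\mu\in\Lambda$. I first show $v\in H^1(\Omega)$, i.e.\ $\pwnabla v$ is a weak gradient of $v$ on all of $\Omega$: for any $\bphi\in C_c^\infty(\Omega)^d$ we compute, integrating by parts elementwise,
\begin{align*}
  \ip{\pwnabla v}{\bphi}_\Omega + \ip{v}{\div\bphi}_\Omega = \sum_{T\in\cT}\dual{\bphi\cdot\normal_T}{v}_{\partial T} = \dual{\trdiv_\cT\bphi}{v}_{\partial\cT} = 0,
\end{align*}
since $\trdiv_\cT\bphi\in\Lambda$ (because $\bphi\in\Hdivset\Omega$, indeed $\bphi$ is smooth with compact support). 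Hence $\nabla v=\pwnabla v\in L_2(\Omega)^d$ and $v\in H^1(\Omega)$. It remains to show $v$ has vanishing trace on $\Gamma$. Now that $v\in H^1(\Omega)$, for any $\btau\in\Hdivset\Omega$ we have $\ip{\div\btau}{v}_\Omega+\ip{\btau}{\nabla v}_\Omega = \dual{\btau\cdot\normal}{v}_\Gamma$, while by hypothesis the left-hand side equals $\dual{\trdiv_\cT\btau}{v}_{\partial\cT}=0$. Thus $\dual{\btau\cdot\normal}{v}_\Gamma=0$ for every $\btau\in\Hdivset\Omega$; since the normal trace map $\btau\mapsto\btau\cdot\normal|_\Gamma$ from $\Hdivset\Omega$ onto $H^{-1/2}(\Gamma)$ is surjective, this forces the trace of $v$ on $\Gamma$ to vanish, so $v\in H^1_0(\Omega)$.

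The second equivalence follows by the same argument with the roles of $v$ and $\btau$ interchanged: the forward direction uses the global integration-by-parts identity defining $\trnabla_\cT$ together with $\btau\in\Hdivset\Omega$ implying no interelement jump of the normal component; the converse first tests with $\bphi$ the curl/gradient of $C_c^\infty$ fields to conclude $\pwdiv\btau$ is a weak divergence on $\Omega$, hence $\btau\in\Hdivset\Omega$, and then tests against all $v\in H^1(\Omega)$ (using surjectivity of $v\mapsto v|_\Gamma$ onto $H^{1/2}(\Gamma)$) to kill the boundary term and conclude $\btau\in\Hdivset\Omega$ with the appropriate boundary behavior encoded in the vanishing pairing against all of $W$. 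The only subtle point — and the one I would be most careful about — is the precise duality bookkeeping: making sure the elementwise trace pairings $\dual{\cdot}{\cdot}_{\partial T}$ sum correctly to the global ones, that $\trdiv_\cT\bphi$ and $\trnabla_\cT\bphi$ genuinely land in $\Lambda$ and $W$ for the test fields used, and that the surjectivity of the classical trace operators onto $H^{\pm1/2}(\Gamma)$ is invoked correctly to pass from "pairing vanishes against all boundary data" to "trace is zero". Since this is exactly~\cite[Theorem~2.3]{BrokenSpaces}, I would cite it and keep the proof brief.
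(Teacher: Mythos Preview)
Your argument for the first equivalence is correct and is the standard one. The paper itself gives no proof here; it simply cites~\cite[Theorem~2.3]{BrokenSpaces}, which is exactly what you suggest doing at the end, so in that sense your proposal matches the paper.

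One small wobble in your sketch of the second equivalence: once you have tested with $\phi\in C_c^\infty(\Omega)$ (scalar, so that $\trnabla_\cT\phi\in W$) and concluded that $\pwdiv\btau$ is the weak divergence of $\btau$ on $\Omega$, you are already done; the statement only asks for $\btau\in\Hdivset\Omega$, with no boundary condition. Your additional step of ``testing against all $v\in H^1(\Omega)$ and using surjectivity onto $H^{1/2}(\Gamma)$'' is both unnecessary and not quite available, since $W=\trnabla_\cT(H_0^1(\Omega))$ only lets you test with $v\in H_0^1(\Omega)$, not general $v\in H^1(\Omega)$. The asymmetry between the two equivalences is genuine: the boundary condition lives on the $H^1$ side, not on the $\Hdivset{}$ side. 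Dropping that last sentence fixes the sketch.
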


\subsection{Primal hybrid variational formulation}\label{sec:mixedLM}
We test~\eqref{eq:model:a} with $v\in H^1(T)$ and integrate by parts to obtain
\begin{align*}
  \varepsilon^2\ip{\nabla u}{\nabla v}_T -\varepsilon^2\dual{\nabla u\cdot\normal_T}{v}_{\partial T} + \ip{u}v_T = \ip{f}{v}_T. 
\end{align*}
Summing over all $T\in\cT$ and introducing $\lambda = \trdiv_{\cT}\varepsilon\nabla u$ we get the system
\begin{subequations}
\begin{align*}
  \varepsilon^2\ip{\pwnabla u}{\pwnabla v}_\Omega + \ip{u}v_\Omega - \varepsilon\dual{\lambda}{v}_{\partial\cT} = \ip{f}{v}_{\cT}.
\end{align*}
To ensure conformity of solutions we include continuity constraints by imposing
\begin{align*}
  \dual{\mu}{u}_{\partial\cT} = 0 \quad\forall \mu\in \Lambda.
\end{align*}
\end{subequations}

Introducing the bilinear forms 
\begin{align*}
  a(u,v) &= \varepsilon^2\ip{\pwnabla u}{\pwnabla v}_\Omega + \ip{u}v_\Omega, \\
  b(\lambda,v) &= -\varepsilon\dual{\lambda}{v}_{\partial\cT}
\end{align*}
for all $u,v\in U=H^1(\cT)$, $\lambda\in \Lambda=H^{-1/2}(\cT)$ the primal hybrid variational formulation reads: Find $(u,\lambda)\in U\times \Lambda$ such that
\begin{subequations}\label{eq:dglm}
\begin{alignat}{2}
  &a(u,v) + b(\lambda,v) &\,=\,& \ip{f}{v}, \label{eq:dglm:a}
  \\
  &b(\mu,u) &\,=\,&0 \label{eq:dglm:b}
\end{alignat}
\end{subequations}
for all $(v,\mu)\in U\times \Lambda$.

Let us first show that formulation~\eqref{eq:dglm} is equivalent to problem~\eqref{eq:model}.
\begin{proposition}
  Let $f\in L_2(\Omega)$. Problems~\eqref{eq:model} and~\eqref{eq:dglm} are equivalent in the following sense: If $u\in H_0^1(\Omega)$ solves~\eqref{eq:model} then $(u,\trdiv_{\cT}\varepsilon\nabla u)\in U\times \Lambda$ solves~\eqref{eq:dglm}. 
  Conversely, if $(u,\lambda)\in U\times \Lambda$ solves~\eqref{eq:dglm} then $u\in H_0^1(\Omega)$ and $u$ solves~\eqref{eq:model}. Furthermore, $\lambda = \trdiv_{\cT}\varepsilon\nabla u$.
\end{proposition}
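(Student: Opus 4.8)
The plan is to prove the two implications separately, using Lemma~\ref{lem:cont} to convert the constraint~\eqref{eq:dglm:b} into $H_0^1$-conformity, and Lemma~\ref{lem:traceinfsup} (injectivity of the broken normal-trace operator) to pin down the Lagrange multiplier at the end.

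\emph{Forward implication.} Suppose $u\in H_0^1(\Omega)$ solves~\eqref{eq:model}. Read as an identity in $L_2(\Omega)$, equation~\eqref{eq:model:a} gives $\div(\varepsilon\nabla u)=\varepsilon\Delta u\in L_2(\Omega)$, so $\varepsilon\nabla u\in\Hdivset{\Omega}$ and $\lambda:=\trdiv_{\cT}\varepsilon\nabla u\in\Lambda$ is well defined. Applying the defining relation of $\trdiv_T$ on each $T\in\cT$ and summing over $\cT$, I obtain $\varepsilon\dual{\lambda}{v}_{\partial\cT}=\varepsilon^2\ip{\Delta u}{v}_\Omega+\varepsilon^2\ip{\pwnabla u}{\pwnabla v}_\Omega$ for all $v\in U$; substituting $\varepsilon^2\Delta u=u-f$ and rearranging turns this into $a(u,v)+b(\lambda,v)=\ip{f}{v}$, which is~\eqref{eq:dglm:a}. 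Since $u\in H_0^1(\Omega)$, Lemma~\ref{lem:cont} yields $\dual{\mu}{u}_{\partial\cT}=0$ for all $\mu\in\Lambda$, i.e.~\eqref{eq:dglm:b}.

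\emph{Converse implication.} Let $(u,\lambda)\in U\times\Lambda$ solve~\eqref{eq:dglm}. Equation~\eqref{eq:dglm:b} states $\dual{\mu}{u}_{\partial\cT}=0$ for all $\mu\in\Lambda$, so $u\in H_0^1(\Omega)$ by Lemma~\ref{lem:cont}. Testing~\eqref{eq:dglm:a} only with the smaller class $v\in H_0^1(\Omega)\subset U$ makes $b(\lambda,v)=-\varepsilon\dual{\lambda}{v}_{\partial\cT}$ vanish (Lemma~\ref{lem:cont} applied now to $v$), leaving $\varepsilon^2\ip{\pwnabla u}{\pwnabla v}_\Omega+\ip{u}{v}_\Omega=\ip{f}{v}$ for all $v\in H_0^1(\Omega)$; this is the weak form of~\eqref{eq:model}, and since $f\in L_2(\Omega)$ it also yields $\Delta u\in L_2(\Omega)$ and hence~\eqref{eq:model:a} a.e.\ in $\Omega$. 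Finally, to identify $\lambda$, set $\widetilde\lambda:=\trdiv_{\cT}\varepsilon\nabla u\in\Lambda$ (legitimate by the forward argument); then $(u,\widetilde\lambda)$ also satisfies~\eqref{eq:dglm:a}, so subtracting the two identities gives $\varepsilon\dual{\lambda-\widetilde\lambda}{v}_{\partial\cT}=0$ for all $v\in U$, whence $\|\lambda-\widetilde\lambda\|_{\Lambda,\varepsilon}=0$ by Lemma~\ref{lem:traceinfsup}, and therefore $\lambda=\trdiv_{\cT}\varepsilon\nabla u$.

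The argument is essentially bookkeeping; the one point deserving attention is that the boundary pairing $\dual{\lambda}{v}_{\partial\cT}$ and the element-wise integration by parts are meaningful only once $\varepsilon\nabla u\in\Hdivset{\Omega}$ is known, which is why in both directions one must first extract $\Delta u\in L_2(\Omega)$ from the equation before introducing the multiplier. Likewise, the uniqueness of $\lambda$ is not a pointwise trace statement but rests on the injectivity already packaged in Lemma~\ref{lem:traceinfsup}.
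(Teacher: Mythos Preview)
Your proof is correct and follows essentially the same route as the paper's: Lemma~\ref{lem:cont} handles the conformity constraint, testing~\eqref{eq:dglm:a} with $v\in H_0^1(\Omega)$ recovers the weak form, and the identification of $\lambda$ comes from redoing the element-wise integration by parts. The only difference is cosmetic: where the paper tersely writes ``This follows by integration by parts in~\eqref{eq:model:a}'', you spell out the subtraction argument and invoke Lemma~\ref{lem:traceinfsup} explicitly for the injectivity of $\mu\mapsto\dual{\mu}{\cdot}_{\partial\cT}$, which is a welcome clarification rather than a different method.
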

\begin{proof}
  Let $u\in H_0^1(\Omega)$ be the solution of~\eqref{eq:model}. 
  By Lemma~\ref{lem:cont} we have that $b(\mu,u) = 0$ for all $\mu\in \Lambda$. 
  Note that $u\in H_0^1(\Omega)$ and $\Delta u \in L_2(\Omega)$ by~\eqref{eq:model:a}. 
  Integration by parts on each $T\in\cT$ and setting $\lambda = \trdiv_{\cT}\varepsilon\nabla u$ then shows that $(u,\lambda)$ solves~\eqref{eq:dglm}. 

  Conversely, let $(u,\lambda)\in U\times \Lambda$ denote a solution of~\eqref{eq:dglm}. By~\eqref{eq:dglm:b} and Lemma~\ref{lem:cont} we get that $u\in H_0^1(\Omega)$. Taking $v\in H_0^1(\Omega)$ we, again by Lemma~\ref{lem:cont}, get that $b(\lambda,v) = 0$. Then,~\eqref{eq:dglm:a} yields
  \begin{align*}
    \varepsilon^2\ip{\nabla u}{\nabla v}_\Omega + \ip{u}v_\Omega = \ip{f}v_\Omega \quad\forall v\in H_0^1(\Omega). 
  \end{align*}
  Thus, $u\in H_0^1(\Omega)$ is the weak solution of~\eqref{eq:model}. It remains to prove that $\lambda = \trdiv_{\cT}(\varepsilon\nabla u)$. This follows by integration by parts in~\eqref{eq:model:a} which finishes the proof. 
\end{proof}

In what follows we analyze problem~\eqref{eq:dglm}. Since it is a mixed system we can apply the classic theory from~\cite{BoffiBrezziFortin}. 
\begin{theorem}\label{thm:dglm}
  Problem~\eqref{eq:dglm} is well posed. In particular, for any $f\in L_2(\Omega)$ there exists a unique solution $(u,\lambda)\in U\times \Lambda$ with
  \begin{align*}
    \|u\|_{U,\varepsilon} + \|\lambda\|_{\Lambda,\varepsilon} \lesssim \|f\|_\Omega. 
  \end{align*}
  The involved constant is independent of $\varepsilon$ and $\cT$. 
\end{theorem}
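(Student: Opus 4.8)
The plan is to verify the hypotheses of the standard Babu\v{s}ka--Brezzi theory for mixed problems (e.g.\ \cite{BoffiBrezziFortin}): namely (i) boundedness of $a(\cdot,\cdot)$ and $b(\cdot,\cdot)$ on the $\varepsilon$-weighted norms, (ii) coercivity of $a(\cdot,\cdot)$ on the kernel $\ker b := \{v\in U : b(\mu,v)=0 \ \forall\mu\in\Lambda\}$, and (iii) the inf--sup condition for $b(\cdot,\cdot)$. Boundedness of $b$ is immediate from the continuity estimate for $\trdiv_\cT$ already recorded above: $|b(\lambda,v)| = |\varepsilon\dual{\lambda}{v}_{\partial\cT}| \le \|\lambda\|_{\Lambda,\varepsilon}\|v\|_{U,\varepsilon}$. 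Boundedness of $a$ is a one-line Cauchy--Schwarz estimate, since $a(u,v) = \varepsilon^2\ip{\pwnabla u}{\pwnabla v}_\Omega + \ip{u}{v}_\Omega \le \|u\|_{U,\varepsilon}\|v\|_{U,\varepsilon}$; in fact $a(v,v) = \|v\|_{U,\varepsilon}^2$, so $a$ is coercive on all of $U$ (not just the kernel) with constant $1$, which disposes of (ii) entirely.

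The inf--sup condition (iii) is where the actual content lies, and the key tool is Lemma~\ref{lem:traceinfsup}. I would argue as follows: given $0\neq\lambda\in\Lambda$, Lemma~\ref{lem:traceinfsup} gives
\begin{align*}
  \|\lambda\|_{\Lambda,\varepsilon} = \sup_{0\neq v\in U}\frac{\varepsilon\dual{\lambda}{v}_{\partial\cT}}{\|v\|_{U,\varepsilon}} = \sup_{0\neq v\in U}\frac{-b(\lambda,v)}{\|v\|_{U,\varepsilon}},
\end{align*}
so $\sup_{0\neq v\in U} b(\lambda,v)/\|v\|_{U,\varepsilon} = \|\lambda\|_{\Lambda,\varepsilon}$ (replacing $v$ by $-v$), which is exactly the inf--sup condition with constant $1$, uniform in $\varepsilon$ and $\cT$. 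With (i)--(iii) in hand, the abstract theory yields existence, uniqueness, and the a priori bound $\|u\|_{U,\varepsilon} + \|\lambda\|_{\Lambda,\varepsilon} \lesssim \sup_{0\neq v}\ip{f}{v}/\|v\|_{U,\varepsilon}$; since $|\ip{f}{v}_\Omega| \le \|f\|_\Omega\|v\|_\Omega \le \|f\|_\Omega\|v\|_{U,\varepsilon}$, the right-hand side is bounded by $\|f\|_\Omega$, giving the claimed estimate with an $\varepsilon$- and $\cT$-independent constant.

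I do not anticipate a genuine obstacle here: the singular-perturbation difficulty has already been absorbed into the definitions of the $\varepsilon$-weighted norms $\|\cdot\|_{U,\varepsilon}$ and $\|\cdot\|_{\Lambda,\varepsilon}$, and Lemma~\ref{lem:traceinfsup} is precisely the statement that the trace pairing is perfectly balanced in these norms. The only point requiring a modicum of care is bookkeeping the constants through the Brezzi theory to confirm none of them secretly depends on $\varepsilon$; but since the boundedness constant of $b$, the coercivity constant of $a$, and the inf--sup constant are all equal to $1$, and the continuity of the functional $v\mapsto\ip{f}{v}_\Omega$ costs only $\|f\|_\Omega$, this is transparent. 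An alternative, equally short route would be to write the combined bilinear form $\mathcal{B}((u,\lambda),(v,\mu)) := a(u,v)+b(\lambda,v)+b(\mu,u)$ on $U\times\Lambda$ and verify a single inf--sup condition for it directly, but the two-field Brezzi formulation is cleaner here given that $a$ is coercive on the whole space.
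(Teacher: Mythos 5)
Your proposal is correct and follows essentially the same route as the paper: boundedness of $a$ and $b$, the inf--sup condition for $b$ supplied by Lemma~\ref{lem:traceinfsup}, and coercivity of $a$ on all of $U$ (since $a(v,v)=\|v\|_{U,\varepsilon}^2$), followed by the standard Brezzi theory. You merely spell out a few details the paper leaves implicit (the sign flip $v\mapsto -v$ and the bound $|\ip{f}{v}_\Omega|\le\|f\|_\Omega\|v\|_{U,\varepsilon}$), which is fine.
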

\begin{proof}
  First, note that $a(\cdot,\cdot)$ and $b(\cdot,\cdot)$ are bounded bilinear forms and that $v\mapsto \ip{f}v_\Omega$ defines a bounded linear functional on $U$. Furthermore, Lemma~\ref{lem:traceinfsup} shows that
  \begin{align*}
    \sup_{0\neq v\in U} \frac{b(\mu,v)}{\|v\|_{U,\varepsilon}} = \|\mu\|_{\Lambda,\varepsilon} \quad\forall \mu \in \Lambda. 
  \end{align*}
  Since $a(\cdot,\cdot)$ is an inner product which induces the norm $\|\cdot\|_{U,\varepsilon}$ the proof is finished. 
\end{proof}

\subsection{PHFEM}\label{sec:dglm}
Let $U_h\subset U$, $\Lambda_h\subset \Lambda$ denote finite-dimensional subspaces. We say that $\Pi_U\colon U\to U_h$ is a Fortin operator if there exists $C_U>0$ with
\begin{align}\label{eq:def:fortin}
  b(\lambda_h,v-\Pi_Uv) = 0, \qquad \|\Pi_U v\|_{U,\varepsilon} \leq C_U \|v\|_{U,\varepsilon} 
  \qquad\forall \lambda_h\in \Lambda_h, v\in U.
\end{align}
The discrete version of~\eqref{eq:dglm} reads as follows: Find $(u_h,\lambda_h)\in U_h\times \Lambda_h$ such that
\begin{subequations}\label{eq:dglm:disc}
\begin{alignat}{2}
  &a(u_h,v_h) + b(\lambda_h,v_h) &\,=\,& \ip{f}{v_h}_\Omega, \label{eq:dglm:disc:a}
  \\
  &b(\mu_h,u_h) &\,=\,&0 \label{eq:dglm:disc:b}
\end{alignat}
\end{subequations}
for all $(v_h,\mu_h)\in U_h\times \Lambda_h$.
We call this numerical scheme the \emph{primal hybrid finite element method} (PHFEM).

The following result is immediate by the theory of mixed methods and Fortin operators, see~\cite{BoffiBrezziFortin}. 
\begin{theorem}\label{thm:discrete}
  Let $f\in L_2(\Omega)$. Suppose there exists a Fortin operator~\eqref{eq:def:fortin}. Then, problem~\eqref{eq:dglm:disc} admits a unique solution $(u_h,\lambda_h)\in U_h\times \Lambda_h$. Let $(u,\lambda) \in U\times \Lambda$ denote the solution of~\eqref{eq:dglm}. The quasi-optimality estimate
  \begin{align*}
    \|u-u_h\|_{U,\varepsilon} + \|\lambda-\lambda_h\|_{\Lambda,\varepsilon} \leq C\min_{(v,\mu)\in U_h\times \Lambda_h} \|u-v\|_{U,\varepsilon} + \|\lambda-\mu\|_{\Lambda,\varepsilon}
  \end{align*}
  holds, where $C>0$ depends on $C_U$ from~\eqref{eq:def:fortin}.
  \qed
\end{theorem}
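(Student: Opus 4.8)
The plan is to verify the three hypotheses of the abstract Babuška--Brezzi theory for saddle-point problems (in the form found in~\cite{BoffiBrezziFortin}) for the discrete pair $(U_h,\Lambda_h)$, and then simply read off existence, uniqueness, stability and quasi-optimality from that theory. The three ingredients are: (i) boundedness of the bilinear forms $a$, $b$ and of the functional $v\mapsto\ip{f}v_\Omega$; (ii) coercivity of $a(\cdot,\cdot)$ on the discrete kernel $K_h:=\set{v_h\in U_h}{b(\mu_h,v_h)=0\ \forall\mu_h\in\Lambda_h}$; and (iii) a discrete inf--sup condition for $b$ on $U_h\times\Lambda_h$ whose constant is bounded below independently of $\varepsilon$ and $\cT$. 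Ingredients (i) and (ii) are essentially free here: boundedness is exactly what was shown in the proof of Theorem~\ref{thm:dglm} and passes to subspaces verbatim, while $a(v_h,v_h)=\|v_h\|_{U,\varepsilon}^2$ for every $v_h\in U_h$ (in particular on $K_h$) because $a$ is the inner product inducing $\|\cdot\|_{U,\varepsilon}$. Note that, reassuringly, coercivity of $a$ holds on \emph{all} of $U_h$, so the fact that $K_h$ need not be contained in the continuous kernel $H_0^1(\Omega)$ causes no difficulty.

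The only point with real content is (iii), and this is precisely where the Fortin operator $\Pi_U$ is used. Fix $0\neq\mu_h\in\Lambda_h$. The first property in~\eqref{eq:def:fortin}, applied with $\lambda_h=\mu_h$, gives $b(\mu_h,\Pi_U v)=b(\mu_h,v)$ for all $v\in U$; since $\Pi_U v\in U_h$ and $\|\Pi_U v\|_{U,\varepsilon}\leq C_U\|v\|_{U,\varepsilon}$, together with Lemma~\ref{lem:traceinfsup} this yields
\begin{align*}
  \sup_{0\neq v_h\in U_h}\frac{b(\mu_h,v_h)}{\|v_h\|_{U,\varepsilon}}
  \;\geq\; \sup_{\substack{v\in U\\ \Pi_U v\neq 0}}\frac{b(\mu_h,\Pi_U v)}{\|\Pi_U v\|_{U,\varepsilon}}
  \;\geq\; \frac{1}{C_U}\sup_{0\neq v\in U}\frac{b(\mu_h,v)}{\|v\|_{U,\varepsilon}}
  \;=\; \frac{\|\mu_h\|_{\Lambda,\varepsilon}}{C_U}.
\end{align*}
Hence the discrete inf--sup constant satisfies $\gamma_h\geq 1/C_U$, which is independent of $\varepsilon$ and $\cT$.

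With (i)--(iii) in hand, the abstract theory of~\cite{BoffiBrezziFortin} delivers a unique solution $(u_h,\lambda_h)\in U_h\times\Lambda_h$ of~\eqref{eq:dglm:disc}, the a priori bound $\|u_h\|_{U,\varepsilon}+\|\lambda_h\|_{\Lambda,\varepsilon}\lesssim\|f\|_\Omega$, and the quasi-optimality estimate of the statement, with a constant $C$ that depends only on the (absolute) boundedness constants of $a$ and $b$, the coercivity constant of $a$ on $K_h$ (equal to $1$), and the lower bound $\gamma_h\geq 1/C_U$; thus $C$ depends on $C_U$ alone, as asserted. In short, there is no genuine obstacle: the entire argument reduces to the standard ``Fortin trick'' of the displayed estimate, and everything else is bookkeeping of constants.
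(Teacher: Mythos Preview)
Your proposal is correct and is precisely the standard argument that the paper leaves implicit: the paper gives no proof beyond the remark that the result ``is immediate by the theory of mixed methods and Fortin operators, see~\cite{BoffiBrezziFortin}'', and what you have written is exactly that theory spelled out (boundedness, full coercivity of $a$, and the Fortin trick for the discrete inf--sup). There is nothing to add.
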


\subsection{Dual hybrid variational formulation}\label{sec:dualmixedLM}
In this section we derive an alternative formulation based on the dual variable of model problem~\eqref{eq:model}. Consider the following first-order reformulation of~\eqref{eq:model}, 
\begin{align*}
  \bsigma - \varepsilon\nabla u &= 0, \\
  -\varepsilon\div\bsigma + u &= f. 
\end{align*}
Testing the first equation on $T\in\cT$ with $\btau\in \Hdivset{T}$ and the second with $v\in L^2(T)$ we find that (formally)
\begin{align*}
  \ip{\bsigma}{\btau}_T + \ip{u}{\varepsilon\div\bsigma}_T - \varepsilon\dual{\bsigma\cdot\normal_T}{u}_{\partial T}&= 0, \\
  \ip{-\varepsilon\div\bsigma}v_T + \ip{u}v_T &= \ip{f}v_T.
\end{align*}
Note that the second equation can be resolved as $u = \varepsilon\div\bsigma + f$ on $T$. Putting the latter identity into the first equation gives
\begin{align*}
  \ip{\bsigma}{\btau}_T + \varepsilon^2\ip{\div\bsigma}{\btau}_T - \varepsilon\dual{\bsigma\cdot\normal_T}{u}_{\partial T} &= \ip{f}{-\varepsilon\div\btau}_T.
\end{align*}
Summing over all $T\in\cT$ and introducing the novel variable $w = \trnabla_{\cT} u$ we end up with the following variational formulation: 
Find $(\bsigma,w) \in \Sigma \times W$ such that
\begin{subequations}\label{eq:dualdglm}
  \begin{alignat}{2}
    &a^\mathrm{dual}(\bsigma,\btau) + b^\mathrm{dual}(w,\btau) &\,=\,& \ip{f}{-\varepsilon\pwdiv\btau}_\Omega, \label{eq:dualdglm:a} \\
    &b^\mathrm{dual}(v,\bsigma) &\, = \, & 0 \label{eq:dualdglm:b}
  \end{alignat}
\end{subequations}
for all $(\btau,v)\in \Sigma\times W$ where the bilinear forms $a^\mathrm{dual}\colon \Sigma\times \Sigma \to \R$, $b^\mathrm{dual}\colon W\times \Sigma \to \R$ are given as
\begin{align*}
  a^\mathrm{dual}(\bsigma,\btau) &= \varepsilon^2\ip{\pwdiv\bsigma}{\pwdiv\btau}_\Omega + \ip{\bsigma}{\btau}_\Omega, \\
  b^\mathrm{dual}(w,\btau) &= -\varepsilon\dual{\btau\cdot\normal}{w}_{\partial\cT}. 
\end{align*}
We recall that the homogeneous boundary conditions are included in space $W$. Furthermore, we stress that~\eqref{eq:dualdglm:b} enforces continuity of $\bsigma$.

First, we prove that formulation~\eqref{eq:dualdglm} is equivalent to model problem~\eqref{eq:model}.
\begin{proposition}
  Let $f\in L_2(\Omega)$. Problems~\eqref{eq:model} and~\eqref{eq:dualdglm} are equivalent in the following sense: If $u\in H_0^1(\Omega)$ solves~\eqref{eq:model} then $(\varepsilon\nabla u,\trnabla_{\cT}u)\in \Sigma\times W$ solves~\eqref{eq:dualdglm}. 
  Conversely, if $(\bsigma,w)\in \Sigma\times W$ solves~\eqref{eq:dualdglm} then $\bsigma\in\Hdivset\Omega$ and $u:=\varepsilon\div\bsigma+f\in H_0^1(\Omega)$ solves~\eqref{eq:model}. Furthermore, $w = \trnabla_{\cT}u$ and $\bsigma = \varepsilon\nabla u$.
\end{proposition}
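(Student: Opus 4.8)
\noindent
The plan is to verify the two implications separately; in both the workhorse is elementwise integration by parts (the defining identity of $\trdiv_T$), while Lemma~\ref{lem:cont} takes care of all the conformity bookkeeping.

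\emph{Forward implication.} Suppose $u\in H_0^1(\Omega)$ solves~\eqref{eq:model} and set $\bsigma:=\varepsilon\nabla u$, $w:=\trnabla_\cT u$. First I would note that $\bsigma\in L_2(\Omega)^d$ and, by~\eqref{eq:model:a}, $\varepsilon\div\bsigma=u-f\in L_2(\Omega)$, so $\bsigma\in\Hdivset\Omega\subset\Sigma$, and $w\in W$ by definition of $W$. Then~\eqref{eq:dualdglm:b} is immediate from Lemma~\ref{lem:cont} since $\bsigma\in\Hdivset\Omega$. For~\eqref{eq:dualdglm:a} I would expand $-\varepsilon\dual{\btau\cdot\normal}{w}_{\partial\cT}=-\varepsilon\sum_{T\in\cT}\big(\ip{\div\btau}{u}_T+\ip{\btau}{\nabla u}_T\big)$; the term $\varepsilon\ip{\btau}{\nabla u}_\Omega=\ip{\btau}{\bsigma}_\Omega$ cancels the $\ip{\bsigma}{\btau}_\Omega$ coming from $a^{\mathrm{dual}}$, leaving $\varepsilon\ip{\pwdiv\btau}{\varepsilon\div\bsigma-u}_\Omega=-\varepsilon\ip{\pwdiv\btau}{f}_\Omega$ by~\eqref{eq:model:a}, which is exactly $\ip{f}{-\varepsilon\pwdiv\btau}_\Omega$.

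\emph{Converse implication.} Let $(\bsigma,w)\in\Sigma\times W$ solve~\eqref{eq:dualdglm}. From~\eqref{eq:dualdglm:b} and Lemma~\ref{lem:cont} we get $\bsigma\in\Hdivset\Omega$; put $u:=\varepsilon\div\bsigma+f\in L_2(\Omega)$, so that $\varepsilon\ip{u}{\pwdiv\btau}_\Omega=\varepsilon^2\ip{\pwdiv\bsigma}{\pwdiv\btau}_\Omega+\varepsilon\ip{f}{\pwdiv\btau}_\Omega$. Inserting this into~\eqref{eq:dualdglm:a} collapses that equation to
\begin{align*}
  \ip{\bsigma}{\btau}_\Omega+\varepsilon\ip{u}{\pwdiv\btau}_\Omega=\varepsilon\dual{\btau\cdot\normal}{w}_{\partial\cT}\qquad\forall\,\btau\in\Sigma.
\end{align*}
Now I would specialise $\btau$ three times. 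Choosing $\btau\in C_c^\infty(\Omega)^d\subset\Hdivset\Omega$ kills the boundary term (Lemma~\ref{lem:cont}) and gives $\ip{\bsigma}{\btau}_\Omega=-\varepsilon\ip{u}{\div\btau}_\Omega$, i.e.\ the distributional gradient of $u$ equals $\varepsilon^{-1}\bsigma\in L_2(\Omega)^d$; hence $u\in H^1(\Omega)$ with $\bsigma=\varepsilon\nabla u$. Choosing next a general $\btau\in\Hdivset\Omega$, the boundary term again vanishes and global integration by parts (now legitimate, since $u\in H^1(\Omega)$) yields $\dual{\btau\cdot\normal}{u}_{\partial\Omega}=0$ for all such $\btau$; as the normal trace maps $\Hdivset\Omega$ onto $H^{-1/2}(\partial\Omega)$ and $\Omega$ is Lipschitz, this forces $u|_\Gamma=0$, so $u\in H_0^1(\Omega)$. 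Finally, for arbitrary $\btau\in\Sigma$, inserting $\bsigma=\varepsilon\nabla u$ and integrating by parts elementwise rewrites the collapsed identity as $\dual{\trnabla_\cT u}{\btau}_{\partial\cT}=\dual{w}{\btau}_{\partial\cT}$ for all $\btau\in\Sigma$; since $w-\trnabla_\cT u\in W$, Lemma~\ref{lem:traceinfsup} gives $\|w-\trnabla_\cT u\|_{W,\varepsilon}=0$, whence $w=\trnabla_\cT u$. To close, $u=\varepsilon\div\bsigma+f=\varepsilon^2\Delta u+f$ in $L_2(\Omega)$, i.e.\ $-\varepsilon^2\Delta u+u=f$, which together with $u\in H_0^1(\Omega)$ says that $u$ solves~\eqref{eq:model} (and $\bsigma=\varepsilon\nabla u$ as already shown).

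I expect the only genuinely delicate point to be the bootstrap in the converse: passing from the broken information initially carried by the collapsed identity to $u\in H^1(\Omega)$ and then to the homogeneous boundary condition, where one has to pick the right class of test tensors $\btau$ at each stage and be precise about the meaning of the $\partial\cT$- and $\partial\Omega$-pairings. The forward direction and the algebraic manipulations are routine once the elementwise Green identity is written down.
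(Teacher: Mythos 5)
Your proof is correct and follows essentially the same route as the paper: Lemma~\ref{lem:cont} handles both conformity statements, and in the converse one defines $u=\varepsilon\div\bsigma+f$ and recovers the primal problem. The only difference is one of detail: where the paper reduces to the standard dual mixed variational formulation of~\eqref{eq:model} and invokes its known equivalence, you verify that step explicitly (distributional gradient via $C_c^\infty$ test fields, boundary condition via surjectivity of the normal trace, and $w=\trnabla_\cT u$ via Lemma~\ref{lem:traceinfsup}), all of which is sound.
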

\begin{proof}
  Let $u\in H_0^1(\Omega)$ be the solution of~\eqref{eq:model}. Set $\bsigma = \varepsilon\nabla u$ and $w = \trnabla_\cT u$.
  By Lemma~\ref{lem:cont} we have that $b^\mathrm{dual}(v,\bsigma) = 0$ for all $v\in W$ which is~\eqref{eq:dualdglm:b}.
  Testing $\bsigma-\varepsilon\nabla u =0$ with some $\btau\in\Sigma$, integrating by parts and replacing $u$ by $\varepsilon\div\bsigma + f$ shows~\eqref{eq:dualdglm:a}.

  Conversely, let $(\bsigma,w)\in \Sigma\times W$ denote the solution of~\eqref{eq:dualdglm}. 
  By~\eqref{eq:dualdglm:b} and Lemma~\ref{lem:cont} we get that $\bsigma\in\Hdivset\Omega$. Taking $\btau\in \Hdivset\Omega$ in~\eqref{eq:dualdglm:a} and using Lemma~\ref{lem:cont} to conclude that $b^\mathrm{dual}(w,\btau) = 0$, we find that
  \begin{align*}
    \ip{\bsigma}{\btau}_\Omega + \varepsilon^2 \ip{\div\bsigma}{\div\btau}_\Omega = \ip{f}{-\varepsilon\div\btau}_\Omega \quad\forall \btau\in \Hdivset\Omega.
  \end{align*}
  Define $u = \varepsilon\div\bsigma+f$. Then, 
  \begin{align*}
    \ip{\bsigma}{\btau}_\Omega + \ip{u}{\varepsilon\div\btau}_\Omega &= 0, \\
    -\ip{\varepsilon\div\bsigma}{v}_\Omega + \ip{u}v_\Omega &= \ip{f}v_\Omega
  \end{align*}
  for all $(\btau,v)\in \Hdivset\Omega\times L^2(\Omega)$. This is the variational formulation of the first-order reformulation of~\eqref{eq:model}. 
  We conclude that $\bsigma=\varepsilon\nabla u$, $u\in H_0^1(\Omega)$ and $-\varepsilon^2\Delta u + u = f$. 
  It remains to prove that $w = \trnabla_{\cT}u$. This follows by integration by parts, which finishes the proof. 
\end{proof}

In what follows we analyze problem~\eqref{eq:dualdglm}. As for problem~\eqref{eq:dglm} we can apply the classic theory on mixed methods from~\cite{BoffiBrezziFortin}. 
\begin{theorem}
  Problem~\eqref{eq:dualdglm} is well posed. In particular, for any $f\in L_2(\Omega)$ there exists a unique solution $(\bsigma,w)\in \Sigma\times W$ with
  \begin{align*}
    \|\bsigma\|_{\Sigma,\varepsilon} + \|w\|_{W,\varepsilon} \lesssim \|f\|_\Omega. 
  \end{align*}
  The involved constant is independent of $\varepsilon$ and $\cT$. 
\end{theorem}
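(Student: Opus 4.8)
The plan is to verify the Brezzi conditions for the mixed system~\eqref{eq:dualdglm}, exactly mirroring the proof of Theorem~\ref{thm:dglm} but with the roles of the spaces reversed. First I would observe that $a^\mathrm{dual}(\cdot,\cdot)$ is an inner product on $\Sigma = \Hdivset\cT$ whose induced norm is precisely $\|\cdot\|_{\Sigma,\varepsilon}$, hence it is bounded and coercive on all of $\Sigma$ (in particular on the kernel of $b^\mathrm{dual}$, so no further kernel-coercivity argument is needed). Next, $b^\mathrm{dual}(\cdot,\cdot)$ is bounded: from the definition $b^\mathrm{dual}(w,\btau) = -\varepsilon\dual{\btau\cdot\normal}{w}_{\partial\cT}$ and the boundedness of the trace operator $\trnabla_\cT$ recorded just after Lemma~\ref{lem:traceinfsup}, we get $|b^\mathrm{dual}(w,\btau)| \le \|w\|_{W,\varepsilon}\|\btau\|_{\Sigma,\varepsilon}$. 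Finally, the right-hand side $\btau \mapsto \ip{f}{-\varepsilon\pwdiv\btau}_\Omega$ is a bounded linear functional on $\Sigma$ since $|\ip{f}{-\varepsilon\pwdiv\btau}_\Omega| \le \|f\|_\Omega\,\varepsilon\|\pwdiv\btau\|_\Omega \le \|f\|_\Omega\|\btau\|_{\Sigma,\varepsilon}$.

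The crucial ingredient is the $\inf$--$\sup$ condition for $b^\mathrm{dual}$ on $W\times\Sigma$, and this is supplied directly by the second identity in Lemma~\ref{lem:traceinfsup}: for every $w\in W$,
\begin{align*}
  \sup_{0\neq\btau\in\Sigma}\frac{b^\mathrm{dual}(w,\btau)}{\|\btau\|_{\Sigma,\varepsilon}}
  = \sup_{0\neq\btau\in\Sigma}\frac{-\varepsilon\dual{\btau\cdot\normal}{w}_{\partial\cT}}{\|\btau\|_{\Sigma,\varepsilon}}
  = \|w\|_{W,\varepsilon},
\end{align*}
so the $\inf$--$\sup$ constant equals $1$, independently of $\varepsilon$ and $\cT$. (The sign in front is harmless since one may replace $\btau$ by $-\btau$ in the supremum.) With boundedness of both forms, coercivity of $a^\mathrm{dual}$ on the whole space, the uniform $\inf$--$\sup$ bound, and boundedness of the data functional all in hand, the Babu\v{s}ka--Brezzi theory from~\cite{BoffiBrezziFortin} yields existence, uniqueness, and the a priori bound $\|\bsigma\|_{\Sigma,\varepsilon}+\|w\|_{W,\varepsilon}\lesssim\|f\|_\Omega$ with a constant depending only on the (universal) bounds above — in particular independent of $\varepsilon$ and of the mesh.

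There is essentially no obstacle here: every piece has already been prepared in Section~\ref{sec:spaces}. The only point that requires a word of care is making explicit that $a^\mathrm{dual}$ is coercive on all of $\Sigma$ and not merely on the kernel $\set{\btau\in\Sigma}{b^\mathrm{dual}(v,\btau)=0\ \forall v\in W}$ — but this is trivially true because $a^\mathrm{dual}(\btau,\btau) = \|\btau\|_{\Sigma,\varepsilon}^2$ by definition of the norm, which is the same self-contained observation that closed the proof of Theorem~\ref{thm:dglm}. Thus the proof can be written in two or three sentences, referencing Lemma~\ref{lem:traceinfsup} for the $\inf$--$\sup$ condition and~\cite{BoffiBrezziFortin} for the abstract conclusion.
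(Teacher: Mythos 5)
Your proposal is correct and follows exactly the route the paper intends: the paper's own proof is simply the remark that the argument mirrors Theorem~\ref{thm:dglm}, i.e.\ verify the Brezzi conditions using the second identity of Lemma~\ref{lem:traceinfsup} for the $\inf$--$\sup$ condition and the observation that $a^\mathrm{dual}(\btau,\btau)=\|\btau\|_{\Sigma,\varepsilon}^2$ for coercivity. All the individual verifications you supply (boundedness of the forms and of the load functional, the sign remark in the supremum) are accurate, so nothing further is needed.
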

\begin{proof}
  The proof follows similar arguments as the one for Theorem~\ref{thm:dglm} and is left to the reader. 
\end{proof}

\subsection{DHFEM}\label{sec:dualdglm}
Let $\Sigma_h\subset \Sigma$, $W_h\subset W$ denote finite-dimensional subspaces. We say that $\Pi_\Sigma\colon \Sigma\to \Sigma_h$ is a Fortin operator if there exists $C_\Sigma>0$ with
\begin{align}\label{eq:def:fortin:dual}
  b^\mathrm{dual}(w_h,\btau-\Pi_\Sigma\btau) = 0, \qquad \|\Pi_\Sigma \btau\|_{\Sigma,\varepsilon} \leq C_\Sigma \|\btau\|_{\Sigma,\varepsilon} 
  \qquad\forall w_h\in W_h, \btau\in \Sigma.
\end{align}
The discrete version of~\eqref{eq:dualdglm} reads as follows: Find $(\bsigma_h,w_h)\in \Sigma_h\times W_h$ such that
\begin{subequations}\label{eq:dualdglm:disc}
\begin{alignat}{2}
  &a^\mathrm{dual}(\bsigma_h,\btau_h) + b^\mathrm{dual}(w_h,\btau_h) &\,=\,& \ip{f}{-\varepsilon\pwdiv\btau_h}_\Omega, \label{eq:dualdglm:disc:a}
  \\
  &b^\mathrm{dual}(v_h,\bsigma_h) &\,=\,&0 \label{eq:dualdglm:disc:b}
\end{alignat}
\end{subequations}
for all $(\btau_h,v_h)\in \Sigma_h\times W_h$.
We call this numerical scheme the \emph{dual hybrid finite element method} (DHFEM).

The following result is immediate by the theory of mixed methods and Fortin operators, see~\cite{BoffiBrezziFortin}. 
\begin{theorem}\label{thm:discrete:dual}
  Let $f\in L_2(\Omega)$. Suppose there exists a Fortin operator~\eqref{eq:def:fortin:dual}. Then, problem~\eqref{eq:dualdglm:disc} admits a unique solution $(\bsigma_h,w_h)\in \Sigma_h\times W_h$. Let $(\bsigma,w) \in \Sigma\times W$ denote the solution of~\eqref{eq:dualdglm}. The quasi-optimality estimate
  \begin{align*}
    \|\bsigma-\bsigma_h\|_{\Sigma,\varepsilon} + \|w-w_h\|_{W,\varepsilon} \leq C\min_{(\btau,v)\in \Sigma_h\times W_h} \|\bsigma-\btau\|_{\Sigma,\varepsilon} 
    + \|w-v\|_{W,\varepsilon}
  \end{align*}
  holds, where $C>0$ depends on $C_\Sigma$ from~\eqref{eq:def:fortin:dual}.
  \qed
\end{theorem}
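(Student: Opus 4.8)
The plan is to apply the abstract theory of mixed finite element methods \cite{BoffiBrezziFortin}, exactly as in the proof of Theorem~\ref{thm:discrete}, with $(U_h,\Lambda_h,a,b)$ replaced by $(\Sigma_h,W_h,a^{\mathrm{dual}},b^{\mathrm{dual}})$. Three ingredients are needed: boundedness of the two forms and of the right-hand side functional, coercivity of $a^{\mathrm{dual}}$ on the discrete kernel, and a discrete inf--sup condition for $b^{\mathrm{dual}}$ on $\Sigma_h\times W_h$. Only the last one uses the hypothesis that a Fortin operator \eqref{eq:def:fortin:dual} exists; everything else is immediate from what has already been established.

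First I would collect the structural facts. Since $a^{\mathrm{dual}}(\btau,\btau)=\|\btau\|_{\Sigma,\varepsilon}^2$, the bilinear form $a^{\mathrm{dual}}$ is an inner product inducing the norm on $\Sigma$; in particular it is coercive with constant $1$ on all of $\Sigma$, hence a fortiori on the discrete kernel $\{\btau_h\in\Sigma_h:b^{\mathrm{dual}}(v_h,\btau_h)=0\ \ \forall v_h\in W_h\}$, so no delicate kernel-coercivity estimate is required. The form $b^{\mathrm{dual}}$ is bounded with constant $1$ by the trace estimate recorded just before Lemma~\ref{lem:traceinfsup}, and $\btau\mapsto\ip{f}{-\varepsilon\pwdiv\btau}_\Omega$ is a bounded linear functional on $\Sigma$ of norm at most $\|f\|_\Omega$. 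All of these constants are independent of $\varepsilon$ and $\cT$.

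The core step is the discrete inf--sup condition, which I would obtain from the Fortin operator by the classical Fortin trick. For any $w_h\in W_h\subset W$, Lemma~\ref{lem:traceinfsup} (with the irrelevant sign absorbed by $\btau\mapsto-\btau$) gives
\[
  \|w_h\|_{W,\varepsilon}=\sup_{0\neq\btau\in\Sigma}\frac{b^{\mathrm{dual}}(w_h,\btau)}{\|\btau\|_{\Sigma,\varepsilon}}.
\]
Using the orthogonality relation in \eqref{eq:def:fortin:dual}, $b^{\mathrm{dual}}(w_h,\btau)=b^{\mathrm{dual}}(w_h,\Pi_\Sigma\btau)$, followed by the stability bound $\|\Pi_\Sigma\btau\|_{\Sigma,\varepsilon}\le C_\Sigma\|\btau\|_{\Sigma,\varepsilon}$, the right-hand side is bounded by $C_\Sigma\sup_{0\neq\btau_h\in\Sigma_h}b^{\mathrm{dual}}(w_h,\btau_h)/\|\btau_h\|_{\Sigma,\varepsilon}$. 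Hence
\[
  \inf_{0\neq w_h\in W_h}\ \sup_{0\neq\btau_h\in\Sigma_h}\ \frac{b^{\mathrm{dual}}(w_h,\btau_h)}{\|\btau_h\|_{\Sigma,\varepsilon}\,\|w_h\|_{W,\varepsilon}}\ \ge\ \frac{1}{C_\Sigma}\ >\ 0.
\]

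Finally I would feed the coercivity constant, this discrete inf--sup constant $1/C_\Sigma$, and the boundedness constants into the Brezzi well-posedness and quasi-optimality theorem for mixed problems \cite{BoffiBrezziFortin}; this yields existence and uniqueness of the solution $(\bsigma_h,w_h)\in\Sigma_h\times W_h$ of \eqref{eq:dualdglm:disc} and the stated C\'ea-type estimate, with a constant $C>0$ depending only on $C_\Sigma$, hence independent of $\varepsilon$ and $\cT$. I do not anticipate any real obstacle: the argument is a verbatim transcription of the primal case, and the one thing to keep an eye on is that every constant entering the Brezzi estimate is $\varepsilon$-free, which is automatic here since those constants are $1$, $1/C_\Sigma$, $1$, and $1$.
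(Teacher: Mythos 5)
Your proof is correct and follows exactly the route the paper intends: the paper gives no written proof but declares the result ``immediate by the theory of mixed methods and Fortin operators,'' and your argument is precisely that standard machinery --- coercivity of $a^{\mathrm{dual}}$ with constant $1$ since it induces $\|\cdot\|_{\Sigma,\varepsilon}$, boundedness of the forms, and the discrete inf--sup constant $1/C_\Sigma$ obtained from \eqref{eq:def:fortin:dual} together with Lemma~\ref{lem:traceinfsup} via the classical Fortin trick. Nothing to add.
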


Given the solution $(\bsigma_h,w_h)$ of~\eqref{eq:dualdglm:disc} we define an approximation of the primal variable $u$ by 
\begin{align}\label{eq:def:uhdual}
  u_h^\mathrm{dual} = \varepsilon\pwdiv\bsigma_h + f.
\end{align}
Since $u = \varepsilon\div\bsigma + f$ with $\bsigma=\varepsilon\nabla u$ and $u\in H_0^1(\Omega)$ being the solution of~\eqref{eq:model}, it is evident that
\begin{align*}
  \|u-u_h^\mathrm{dual}\|_\Omega \leq \varepsilon\|\div\bsigma-\pwdiv\bsigma_h\|_\Omega \leq \|\bsigma-\bsigma_h\|_{\Sigma,\varepsilon}.
\end{align*}

\subsection{Reducing degrees of freedom and parallel computations}
Recall that $a(\cdot,\cdot)$ is an inner product on the product space $U = H^1(\cT)$. 
Thus, $a(\cdot,\cdot)$ can be locally inverted on $U_{h}$ assuming that $U_h$ has product space structure, i.e,  $U_h = \prod_{T\in\cT} U_h(T)$.  Indeed,  let us define
\begin{equation}
a_T(u,v) = \epsilon^2(\nabla u,\nabla v)_T + (u,v)_T \quad \forall u,v\in H^1(T)\,.
\end{equation}
Then,  $a(u,v) = \sum_{T\in\cT} a_T(u,v) $ and,  from \eqref{eq:dglm:disc:a},  we arrive at
\begin{equation}
	a_T(u_h,v_h) = \varepsilon\, \langle\lambda_h,v_h\rangle_{\partial T} + (f,v_h)_{T}\,.
\end{equation}
Thus,  taking advantage of the bilinearity of $a_T(\cdot,\cdot)$ we propose the decomposition 
\begin{equation}
\label{uh-decom}
u_h|_T = \sum_{i=1}^{N_T} c_i^T\,\xi_i^T  + u_h^f|_T
\end{equation}
where  $c_i^T$  for $i=1,\ldots,N_T$ are constants to be determined
and,  $\xi_i^T,  u_h^f|_T\in U_h(T)$ respectively solve the problems
\begin{equation}
\label{local-problems}
a_T(\xi_i^T,v_h) =\varepsilon\, \langle\psi_i^T,v_h\rangle_{\partial T},
\text{ and,  }
a_T(u_h^f,v_h) = (f,v_h)_{T}\quad
\text{ for all } v_h \in U_h(T)
\end{equation}
and where $\{\psi_1^T,\ldots,\psi_{N_T}^T\}$ collects the elements of a basis
$\{\psi_1,\ldots,\psi_{\dim(\Lambda_h)}\}$
 for $\Lambda_h$ 
with support on $\partial T$.
On the other hand,   \eqref{eq:dglm:disc:a} can be used to define the skeleton problem:  Find $c_1,\ldots,c_{\dim(\Lambda_h)}$ such that
\begin{equation}
\sum_{i=1}^{\dim(\Lambda_h)}  c_i\,b(\psi_j,\xi_i) = -\,b(\psi_j,u_h^f)\,,
\text{ for all } j\in\{1,\ldots,\dim(\Lambda_h)\}\,,
\end{equation}
where each $c_i^T$ corresponds to some $c_j$ trough the same mapping 
that associate each $\psi_i^T$ to a $\psi_j$.  Therefore,  \eqref{uh-decom}
provides a local characterization of $u_h$ after an \emph{embarrassingly parallel}
process (in terms of $T\in\cT$)  related to solve local problems \eqref{local-problems}
and then to solve the reduced global problem with a number of degrees equal to $\dim(\Lambda_h)$.  
The process described above has been studied for other methods, e.g., multiscale methods~\cite{MR3336297}.

Similar considerations are valid for the DHFEM where the number of degrees of freedom reduces to $\dim(W_h)$.

\subsection{Formal comparison between the two methods}
If we choose polynomial spaces for the Lagrange multiplier space $\Lambda_h$ resp. $W_h$, e.g., 
\begin{align*}
  \Lambda_h = P^p(\partial\cT) \quad\text{resp.}\quad
  W_h = \trnabla_{\cT}(P^{p+1}(\cT)\cap H_0^1(\Omega))
\end{align*}
we have in general $\dim(\Lambda_h)>\dim(W_h)$ (see Section~\ref{sec:stability} below for precise definitions of the spaces). 
If $d=2$ and $p=0$, then $\dim(\Lambda_h)$ is equal to the number of edges of the triangulation $\cT$ and $\dim(W_h)$ is equal to the number of interior vertices of $\cT$.
From that point of view the DHFEM seems to be more attractive. Nevertheless, the DHFEM requires to discretize the vector-valued function space $\Sigma$. Furthermore, if approximations of the primal variable are of interest, then one needs an additional postprocessing step for DHFEM, see~\eqref{eq:def:uhdual}.

\section{Uniform stability of discrete methods}\label{sec:stability}
In this section we discuss and analyze certain properties of the discretization spaces that are sufficient to obtain uniform stable numerical schemes with respect to parameter $\varepsilon$ and mesh-size $h$. 
In view of the analysis of the methods from the previous section this means that given space $\Lambda_h$ we want to define $U_h$ so that~\eqref{eq:def:fortin} is satisfied (and similar for the DHFEM). 
First, we fix a discretization for $\Lambda$ and then discuss properties of $U_h$ to ensure uniform stability. Next, we fix a discretization for $W_h$ and discuss properties of $\Sigma_h$ that are sufficient for the existence of a Fortin operator with~\eqref{eq:def:fortin:dual}. 

For the discretization let $\cT$ denote a mesh of $\Omega$ into shape-regular simplices (that is, triangles for $d=2$ and tetrahedrons for $d=3$ such that $\max_{T\in\cT}\frac{h_T^d}{|T|}$ is bounded by a constant of order one).
We denote local and global mesh-size by $h_T = \diam(T)$ and $h = \max\{h_T\,:\,T\in\cT\}$, respectively. We choose the space
\begin{align}\label{eq:choice:Lambda}
  \Lambda_h = P^p(\partial\cT) = \trdiv_{\cT}(\RT^p(\cT))
\end{align}
where $\RT^p(\cT)$ denotes the Raviart--Thomas space of degree $p\in \N_0$, i.e., 
\begin{align*}
  \RT^p(\cT) = \set{\btau\in \Hdivset\Omega}{\btau|_T\in \RT^p(T), \, T\in\cT}, \quad \RT^p(T) = P^p(T)^d + \bx P^p(T). 
\end{align*}
Elements of $\Lambda_h$ are piecewise polynomials of degree $\leq p$ on the facets of $T\in\cT$.

The tricky part is to choose $U_h$ so that existence of a Fortin operator with~\eqref{eq:def:fortin} can be guaranteed. Indeed, choosing $U_h$ to be a purely polynomial space yields a constant $C_U \approx \max\{1,\sqrt{h_T\varepsilon^{-1}}\}$ as has been elaborated in~\cite[Theorem~13]{FuehrerHeuerFortin}.
Particularly, for coarse meshes $\varepsilon\ll h$ this means that $\inf$--$\sup$ constants can be small leading to large constants in Theorem~\ref{thm:discrete}.
To remove the dependence of $C_U$ on the ratio $\varepsilon/h_T$ the recent work~\cite{FuehrerHeuerFortin} suggests to include face bubble functions with exponential decay depending on the ratio $\varepsilon/h_T$ in the definition of space $U_h$.

For problem~\eqref{eq:dualdglm:disc} we choose
\begin{align}\label{eq:choice:W}
  W_h = \trnabla_{\cT}(P^{p+1}(\cT)\cap H_0^1(\Omega)).
\end{align}
Here, $P^p(\cT)$ denotes the space of $\cT$-piecewise polynomials of degree $\leq p$. Note that $W_h$ are traces of the standard Lagrange finite element spaces.

The remainder of this section is organized as follows: First, we recall the definition of the modified face bubble functions from~\cite{FuehrerHeuerFortin} in Section~\ref{sec:expbubble}. Then, in Section~\ref{sec:uFortin} resp.~\ref{sec:sigmaFortin} we discuss sufficient conditions for the existence of Fortin operators with~\eqref{eq:def:fortin} resp.~\eqref{eq:def:fortin:dual}. 
Finally, in Section~\ref{sec:expbubble:alt} we present another definition of modified face bubble functions that do not involve exponential functions.

\subsection{Face bubble functions with exponential layer}\label{sec:expbubble}
Given $T\in\cT$ let $\cF_T$ denote its facets (edges for $d=2$ resp. faces for $d=3$) and $\cV_T$ is the set of vertices of $T$.
For each $F\in\cF_T$ let $z_F^T\in \cV_T$ be the vertex opposite to $F$. We define the face bubble function by
\begin{align*}
  \eta_F^T = \prod_{z\in \cV_T\setminus\{z_F^T\}} \eta_z^T
\end{align*}
where $\eta_z^T\in P^1(T)$ is the barycentric coordinate function (i.e., hat function corresponding to a node $z$).
Further set $d_{F}^T = \eta_{z_F^T}^T$.
We use $P^p(T)$ to denote the space of polynomials over $T\in\cT$ of degree $\leq p\in\N_0$. Similarly, $P^p(F)$ is the space of polynomials over the facet $F\in\cF_T$.

We follow~\cite{FuehrerHeuerFortin} and modify $\eta_F^T$ by multiplying with an exponential layer depending on the ratio $h_T/\varepsilon$, 
\begin{align*}
  \eta_{F,\varepsilon}^T = \exp\big(-(h_T/\varepsilon) d_F^T\big) \eta_F^T.
\end{align*}
The function $\eta^T_{F,\varepsilon}$ is visualized in Figure~\ref{fig:bubbles} for the element $T$ spanned by the vertices $(0,0),(1,0),(0,1)$ and $F = (0,1)\times \{0\}$.
The next result summarizes some key properties of $\eta^T_{F,\varepsilon}$. A proof is given in~\cite[Lemma~6]{FuehrerHeuerFortin}.
\begin{lemma}\label{lem:propEtaFe}
  Let $F\in\cF_T$. Then, $\eta^T_F|_{\partial T} = \eta^T_{F,\varepsilon}|_{\partial T}$.
  \begin{align*}
    \|\eta^T_{F,\varepsilon}\|_T \lesssim |T|^{1/2}\left(\frac{\varepsilon}{h_T}\right)^{1/2}, \qquad
    \|\nabla\eta^T_{F,\varepsilon}\|_T \lesssim \frac{|T|^{1/2}}{h_T} \left(\frac{h_T}\varepsilon\right)^{1/2}.
  \end{align*}
  \qed
\end{lemma}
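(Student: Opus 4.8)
The plan is to prove Lemma~\ref{lem:propEtaFe} by direct computation on the reference simplex $\Tref$ and then transfer the estimates to a general element $T$ via the affine equivalence and shape-regularity. The statement $\eta_F^T|_{\partial T} = \eta_{F,\varepsilon}^T|_{\partial T}$ is immediate: on $\partial T$, at least one barycentric coordinate vanishes, and on the complement $F' \neq F$ the factor $\eta_{z_F^T}^T = d_F^T$ vanishes only on $F$ itself; but $d_F^T$ appears only in the exponent, while on $F$ the prefactor $\eta_F^T$ already vanishes since $\eta_F^T$ contains the factor $\eta_{z}^T$ for some $z \in \cV_T \setminus \{z_F^T\}$ that vanishes on $F$—wait, more carefully: on $F$ we have $d_F^T = 0$ so $\exp(-(h_T/\varepsilon)d_F^T) = 1$, hence $\eta_{F,\varepsilon}^T = \eta_F^T$ there; on any other facet $F' \in \cF_T$, the vertex $z_{F'}^T \in \cV_T\setminus\{z_F^T\}$ lies opposite $F'$, so $\eta_{z_{F'}^T}^T$ is a factor of $\eta_F^T$ and vanishes on $F'$, giving $\eta_F^T = 0 = \eta_{F,\varepsilon}^T$ on $F'$. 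So the boundary identity holds on all of $\partial T$.

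For the norm bounds I would first reduce to the reference element. Writing $T = A_T(\Tref) + b_T$ with $\|A_T\| \eqsim h_T$, $\|A_T^{-1}\| \eqsim h_T^{-1}$, $|\det A_T| \eqsim |T|$, and noting that barycentric coordinates are affine-invariant, one gets $\|\eta_{F,\varepsilon}^T\|_T^2 = |\det A_T|\,\|\widehat\eta\|_{\Tref}^2$ and $\|\nabla \eta_{F,\varepsilon}^T\|_T^2 \lesssim |\det A_T|\, h_T^{-2}\, \|\widehat\nabla \widehat\eta\|_{\Tref}^2$, where $\widehat\eta = \exp(-(h_T/\varepsilon)\widehat d_F)\,\widehat\eta_F$ on $\Tref$. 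So it suffices to show, with $s := h_T/\varepsilon \ge 1$,
\begin{align*}
  \int_{\Tref} e^{-2s\widehat d_F}\,\widehat\eta_F^2 \,\di x \lesssim s^{-1},
  \qquad
  \int_{\Tref} \big|\widehat\nabla\!\big(e^{-s\widehat d_F}\widehat\eta_F\big)\big|^2 \di x \lesssim s.
\end{align*}
Here the case $s \lesssim 1$ (i.e. $\varepsilon \eqsim h_T$) is trivial since all quantities are $\eqsim 1$, so one assumes $s$ large. By choosing coordinates so that $\widehat d_F$ is one of the coordinate variables, say $\widehat d_F = x_d$, one reduces the first integral essentially to $\int_0^1 e^{-2sx_d}\,x_d^{2}\,\cdots\,\di x_d$ (the remaining barycentric factors in $\widehat\eta_F$ are bounded), and a one-dimensional computation $\int_0^1 e^{-2sx_d}\,\di x_d \le (2s)^{-1}$ gives the $s^{-1}$ bound; the extra polynomial factors only improve the decay. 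For the gradient, the product rule gives two terms: $e^{-s\widehat d_F}\widehat\nabla\widehat\eta_F$, which is bounded and contributes $\int e^{-2s x_d}\,\di x \lesssim s^{-1} \lesssim s$, and $s\,e^{-s x_d}\,\widehat\eta_F\,\widehat\nabla x_d$, whose square is $s^2 e^{-2sx_d}\widehat\eta_F^2$; since $\widehat\eta_F$ contains the factor $\widehat d_F = x_d$ (as $z_F^{\Tref}$ is excluded from $\cV_{\Tref}\setminus\{z_F^{\Tref}\}$, so actually $\widehat\eta_F = \prod_{z \ne z_F}\widehat\eta_z$ and $x_d = \widehat\eta_{z_F}$ is \emph{not} among them)—hold on, that means $\widehat\eta_F$ does \emph{not} vanish at $x_d = 0$. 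So the dangerous term is genuinely of size $s^2\int e^{-2sx_d}\,\di x \eqsim s^2 \cdot s^{-1} = s$, which is exactly the claimed bound. No cancellation is needed; the $s$ on the right-hand side is sharp and comes from this term.

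The main obstacle is really just bookkeeping: making the change of variables that turns $\widehat d_F$ into a single coordinate precise while keeping track of how the other factors of $\widehat\eta_F$ behave, and confirming that the polynomial-times-exponential integrals have exactly the stated orders in $s$ uniformly. There is no deep difficulty—this is the standard scaling argument for bubble functions combined with elementary one-dimensional Laplace-type integral estimates—so I would state the reference-element inequalities, prove them by the coordinate change and the bound $\int_0^1 x^k e^{-2sx}\,\di x \le C_k s^{-k-1}$ for $s \ge 1$, and then invoke affine equivalence and shape-regularity to conclude. Since a complete proof already appears in~\cite[Lemma~6]{FuehrerHeuerFortin}, citing it after indicating this computation is appropriate.
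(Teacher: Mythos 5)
Your computation is correct and is essentially the only natural proof: the paper itself does not prove this lemma but defers to \cite[Lemma~6]{FuehrerHeuerFortin}, and your reference-element/Laplace-integral argument is the continuous analogue of what the paper \emph{does} write out for the companion Lemma~\ref{ref:lem:propEtaFe:alt}, where the strip $T_\varepsilon=\set{(x,y)}{d_F^T(x,y)<\varepsilon/h_T}$ of measure $\eqsim h_T^{d-1}\varepsilon$ plays exactly the role of your bound $\int_0^1 e^{-2sx}\,\di x\lesssim s^{-1}$ with $s=h_T/\varepsilon$. Your identification of the dominant gradient contribution as $s\,e^{-s\wat d_F}\wat\eta_F\,\wat\nabla\wat d_F$ (because $\wat\eta_F$ does \emph{not} contain the factor $\wat d_F$ and hence does not vanish on $F$) is the correct and essential observation, and it shows the factor $(h_T/\varepsilon)^{1/2}$ is sharp. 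One small imprecision: your dismissal of the case $s\lesssim 1$ as ``trivial since all quantities are $\eqsim 1$'' conflates $s\lesssim1$ with $\varepsilon\eqsim h_T$; for $\varepsilon\gg h_T$ the gradient estimate as literally stated cannot hold, since then $\eta_{F,\varepsilon}^T\to\eta_F^T$ with $\|\nabla\eta_F^T\|_T\eqsim|T|^{1/2}/h_T$ while the claimed right-hand side tends to zero. The lemma therefore carries the implicit hypothesis $\varepsilon\lesssim h_T$ (stated explicitly in Lemma~\ref{ref:lem:propEtaFe:alt}, and the only regime in which the modified bubbles are used); you should state that restriction rather than claim the small-$s$ case is trivial. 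With that proviso your proof is complete.
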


\begin{figure}
  \begin{center}
    \includegraphics[width=\textwidth]{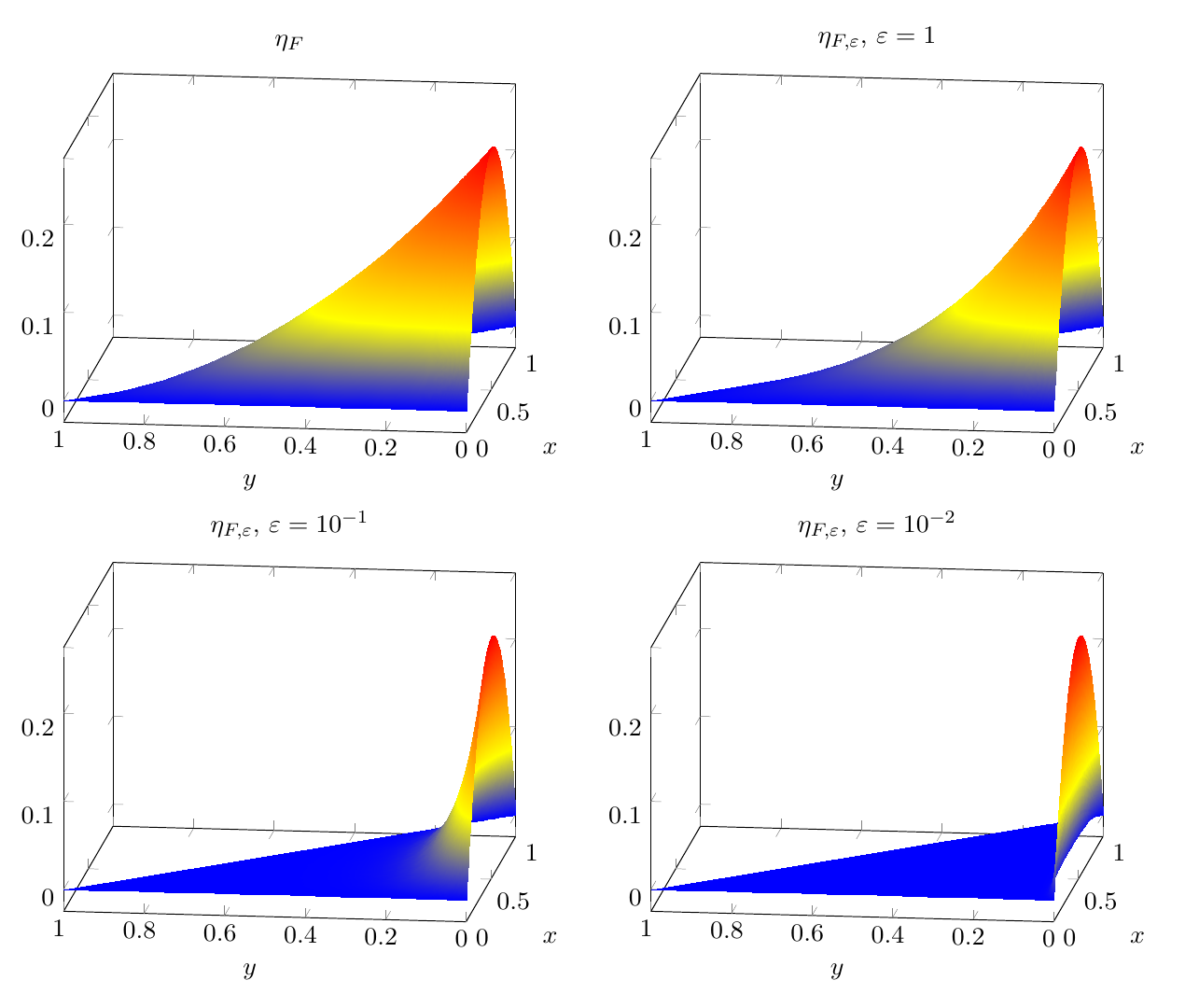}
  \end{center}
  \caption{Visualization of the exponential layer functions for different values of $\varepsilon$.}
  \label{fig:bubbles}
\end{figure}

To define higher order basis functions we follow~\cite[Section~2.4]{FuehrerHeuerFortin} but note that we use a slightly different notation here.
Let $\chi_{F,j}^T\in P^p(T)$, $j=1,\cdots,\dim(P^p(F))$ be such that the set $\{\chi_{F,j}^T|_F\,:\,F\in\cF_T\}$ forms a basis of $P^p(F)$.
Set 
\begin{align*}
  \eta_{F,j,\varepsilon}^T = \eta^T_{F,\varepsilon} \chi^T_{F,j}, \quad j=1,\ldots,\dim(P^p(F)), F\in \cF_T.
\end{align*}
We shall also define $\eta_{F,j}^T = \eta_F^T\chi^T_{F,j}$, $j=1,\cdots,\dim(P^p(F))$, $F\in\cF_T$.

Let $P_c^{p+1}(\cF_T)$ denote the trace space of $P^{p+1}(T)$. Let $\nu_{\partial T,j}\in P^{p+1}(T)$, $j=1,\dots,\dim(P_c^{p+1}(\cF_T))$ be such that $\set{\nu_{\partial T,j}|_{\partial T}}{j=1,\cdots,\dim(P_c^{p+1}(\cF_T))}$ denotes a basis of $P_c^{p+1}(\cF_T)$.
Set $\eeta_F^T = \eta_F^T\normal_T|_F$, $\eeta_{\partial T} = \sum_{F\in\cF_T}\eeta_F^T$ and $\eeta_{F,\varepsilon}^T = \eta_{F,\varepsilon}^T\normal_T|_F$, $\eeta_{\partial T,\varepsilon} = \sum_{F\in\cF_T}\eeta_{F,\varepsilon}^T$.
Further, define
\begin{align*}
  \eeta_{\partial T,j} = \eeta_{\partial T} \nu_{\partial T,j}, \quad \eeta_{\partial T,\varepsilon,j} = \eeta_{\partial T,\varepsilon} \nu_{\partial T,\varepsilon,j}
\end{align*}
for $j=1,\cdots,\dim(P_c^{p+1}(\cF_T))$.

\subsection{Conditions on $U_h$ and Fortin operator}\label{sec:uFortin}
Given $p\in \N_0$ we set for each $T\in\cT$, 
\begin{align*}
  \widetilde U_{hp,\varepsilon}(T) = \begin{cases}
    P^0(T) + \linhull\set{\eta_{F,j,\varepsilon}^T}{j=1,\ldots,\dim(P^p(F)), \, F\in \cF_T} & \varepsilon <h_T, \\
    P^0(T) + \linhull\set{\eta_{F,j}^T}{j=1,\ldots,\dim(P^p(F)), \, F\in \cF_T} & \varepsilon \geq h_T.
  \end{cases}
\end{align*}
Then, we define
\begin{align*}
  \widetilde U_{hp,\varepsilon} = \prod_{T\in\cT} \widetilde U_{hp,\varepsilon}(T).
\end{align*}

The following result follows by combining~\cite[Lemma~3.1 and Lemma~3.6]{FuehrerHeuerFortin}. 
\begin{theorem}
  With $\Lambda_h$ defined in~\eqref{eq:choice:Lambda}, suppose that $U_h\subset U$ is a finite-dimensional space. 
  If $\widetilde U_{hp,\varepsilon}\subseteq U_h$ then there exists an operator $\Pi_U\colon U\to U_h$ satisfying~\eqref{eq:def:fortin} with constant $C_U$ independent of the mesh-size and $\varepsilon$.

  In particular, the assumptions from Theorem~\ref{thm:discrete} are satisfied.
  \qed
\end{theorem}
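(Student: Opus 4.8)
The plan is to construct the Fortin operator $\Pi_U$ elementwise, since both the space $\widetilde U_{hp,\varepsilon}$ and the bilinear form $b(\lambda_h,v)=-\varepsilon\dual{\lambda_h}{v}_{\partial\cT}$ decouple over $T\in\cT$, and then to assemble the global bound from shape-regularity. Fix $T\in\cT$. The orthogonality requirement $b(\lambda_h,v-\Pi_Uv)=0$ for all $\lambda_h\in\Lambda_h$ reduces, by~\eqref{eq:choice:Lambda}, to the condition $\dual{\mu}{v-\Pi_Uv}_{\partial T}=0$ for all $\mu\in P^p(\partial T)$, i.e., the trace of $\Pi_Uv$ on $\partial T$ must match the trace of $v$ tested against all facet polynomials of degree $\le p$. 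First I would exhibit a local operator $\Pi_U^T\colon H^1(T)\to\widetilde U_{hp,\varepsilon}(T)$ doing exactly this: the piecewise-constant component $P^0(T)$ is fixed by, say, requiring $\int_T(v-\Pi_U^Tv)=0$ (or it is not needed for the orthogonality at all and can be used to absorb the $L_2(T)$ part of the norm), while the coefficients multiplying the modified bubbles $\eta_{F,j,\varepsilon}^T$ are chosen to enforce the $\dim(P^p(F))$ moment conditions on each facet. Because $\eta_{F,j,\varepsilon}^T|_{\partial T}=\eta_{F,j}^T|_{\partial T}$ by Lemma~\ref{lem:propEtaFe}, the resulting linear system for these coefficients is exactly the polynomial one on $\partial T$, hence uniquely solvable with a condition number depending only on shape-regularity and $p$.

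The substantive part is the stability bound $\|\Pi_U^Tv\|_{1,\varepsilon,T}\le C_U\|v\|_{1,\varepsilon,T}$ with $C_U$ independent of $\varepsilon$ and $h_T$. I would bound the bubble-coefficient vector in terms of the facet data of $v$ via a scaled trace inequality, $\|v\|_{F}\lesssim h_T^{-1/2}\|v\|_T+h_T^{1/2}\|\nabla v\|_T$, and then insert the sharp norm estimates for the modified bubbles from Lemma~\ref{lem:propEtaFe}, namely $\|\eta_{F,\varepsilon}^T\|_T\lesssim|T|^{1/2}(\varepsilon/h_T)^{1/2}$ and $\varepsilon\|\nabla\eta_{F,\varepsilon}^T\|_T\lesssim|T|^{1/2}(\varepsilon/h_T)^{1/2}$. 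The key point is that both contributions to $\|\cdot\|_{1,\varepsilon,T}$ of a single modified bubble scale like $|T|^{1/2}(\varepsilon/h_T)^{1/2}$, and this factor cancels precisely against the $(\varepsilon/h_T)^{-1/2}$ that appears when one divides by the (non-degenerate) Gram-type normalization of the bubbles on the facet — the scaling mismatch is exactly what the exponential layer is engineered to fix, and it is why a purely polynomial $U_h$ gives the bad constant $\max\{1,\sqrt{h_T/\varepsilon}\}$ of~\cite[Theorem~13]{FuehrerHeuerFortin}. In the regime $\varepsilon\ge h_T$ one uses the ordinary polynomial bubbles $\eta_{F,j}^T$ and the standard (uniformly bounded in this regime) estimates, so the two cases match up.

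With the local operator in hand, I would set $(\Pi_Uv)|_T=\Pi_U^T(v|_T)$ for each $T$; this is well defined into $U_h$ since $\widetilde U_{hp,\varepsilon}\subseteq U_h$ (and one checks $\Pi_U^Tv\in\widetilde U_{hp,\varepsilon}(T)$ by construction, choosing the polynomial-bubble branch or the exponential branch according to whether $\varepsilon\ge h_T$ or $\varepsilon<h_T$). Squaring and summing the local bounds over $T\in\cT$ gives $\|\Pi_Uv\|_{U,\varepsilon}^2=\sum_T\|\Pi_U^Tv\|_{1,\varepsilon,T}^2\le C_U^2\sum_T\|v\|_{1,\varepsilon,T}^2=C_U^2\|v\|_{U,\varepsilon}^2$, and the orthogonality $b(\lambda_h,v-\Pi_Uv)=\sum_T(-\varepsilon)\dual{\lambda_h}{v-\Pi_U^Tv}_{\partial T}=0$ holds termwise. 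This is exactly~\eqref{eq:def:fortin}, and then Theorem~\ref{thm:discrete} applies verbatim. The main obstacle is the $\varepsilon$-uniformity of the local stability estimate: one must track the $(\varepsilon/h_T)$-powers through the trace inequality, the bubble norms, and the inversion of the facet Gram matrix, and verify they cancel rather than accumulate — everything else (elementwise decoupling, well-posedness of the small local systems, summation) is routine. Since the two cited lemmas from~\cite{FuehrerHeuerFortin} already package precisely these local construction and estimate, the proof here is essentially the observation that $\widetilde U_{hp,\varepsilon}\subseteq U_h$ lets one transport that construction into $U_h$ unchanged, so I would keep it brief and defer the detailed local computation to the reference.
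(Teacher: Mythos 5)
Your plan is essentially the paper's: the paper gives no proof beyond citing \cite[Lemma~3.1 and Lemma~3.6]{FuehrerHeuerFortin}, and your elementwise construction (facet-moment matching via the modified bubbles, whose traces coincide with the polynomial ones, plus the $|T|^{1/2}(\varepsilon/h_T)^{1/2}$ scaling of both $\|\eta_{F,\varepsilon}^T\|_T$ and $\varepsilon\|\nabla\eta_{F,\varepsilon}^T\|_T$) is exactly what those lemmas package, and the observation that $\widetilde U_{hp,\varepsilon}\subseteq U_h$ transports the operator into $U_h$ is all that is needed on top.

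One detail as written would fail, though: if you bound the facet data with the \emph{additive} trace inequality $\|v\|_F\lesssim h_T^{-1/2}\|v\|_T+h_T^{1/2}\|\nabla v\|_T$, the gradient term contributes $\varepsilon^{1/2}h_T^{1/2}\|\nabla v\|_T=(h_T/\varepsilon)^{1/2}\,\varepsilon\|\nabla v\|_T$ after multiplying by the bubble scaling, which reinstates exactly the bad factor $\sqrt{h_T/\varepsilon}$ you are trying to remove. The cancellation you correctly identify as the crux requires the \emph{multiplicative} form, applied after subtracting the elementwise mean: $\|v-\Pi_T^0v\|_F\lesssim\|v\|_T^{1/2}\|\nabla v\|_T^{1/2}$, so that the coefficient times the bubble norm is controlled by $\|v\|_T^{1/2}\,(\varepsilon\|\nabla v\|_T)^{1/2}\le\tfrac12\|v\|_{1,\varepsilon,T}$ by Young's inequality. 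With that substitution your argument closes and coincides with the cited construction.
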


\subsection{Conditions on $\Sigma_h$ and Fortin operator}\label{sec:sigmaFortin}
Given $p\in \N_0$ we set for each $T\in\cT$, 
\begin{align*}
  \widetilde \Sigma_{hp,\varepsilon}(T) = \begin{cases}
    P^0(T)^d + \linhull\set{\eeta_{\partial T,j},\eeta_{\partial T,\varepsilon,j}}{j=1,\ldots,\dim(P^{p+1}_c(\cF_T))} & \varepsilon <h_T, \\
    P^0(T)^d + \linhull\set{\eeta_{\partial T,j}}{j=1,\ldots,\dim(P^{p+1}_c(\cF_T))} & \varepsilon \geq h_T.
  \end{cases}
\end{align*}
Then, we define
\begin{align*}
  \widetilde \Sigma_{hp,\varepsilon} = \prod_{T\in\cT} \widetilde \Sigma_{hp,\varepsilon}(T).
\end{align*}
We note that our definition of the spaces slightly deviates from~\cite[Section~4]{FuehrerHeuerFortin}. There, instead of the functions $\eeta_{\partial T,j}$, Raviart--Thomas basis functions are used. However, the same argumentations as in the proofs of~\cite[Lemma~4.1 and Lemma~4.5]{FuehrerHeuerFortin} can be used to show the next result.
\begin{theorem}\label{thm:fortin:dual}
  With $W_h$ defined in~\eqref{eq:choice:W}, suppose that $\Sigma_h\subset \Sigma$ is a finite-dimensional space. 
  If $\widetilde \Sigma_{hp,\varepsilon}\subseteq \Sigma_h$ then there exists an operator $\Pi_\Sigma\colon \Sigma\to \Sigma_h$ satisfying~\eqref{eq:def:fortin:dual}.

  In particular, the assumptions from Theorem~\ref{thm:discrete:dual} are satisfied.
  \qed
\end{theorem}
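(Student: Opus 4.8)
The plan is to adapt, with only minor changes, the two-step Fortin-operator construction of \cite[Lemma~4.1 and Lemma~4.5]{FuehrerHeuerFortin}, and then to invoke Theorem~\ref{thm:discrete:dual}. First I would reduce the verification of~\eqref{eq:def:fortin:dual} to a purely local statement. Since $W_h=\trnabla_\cT(P^{p+1}(\cT)\cap H_0^1(\Omega))$, every $w_h\in W_h$ is of the form $w_h=\trnabla_\cT v_h$ with $v_h\in P^{p+1}(\cT)\cap H_0^1(\Omega)$, so $b^\mathrm{dual}(w_h,\btau-\Pi_\Sigma\btau)=-\varepsilon\sum_{T\in\cT}\dual{(\btau-\Pi_\Sigma\btau)\cdot\normal_T}{v_h|_{\partial T}}_{\partial T}$ with $v_h|_{\partial T}\in P_c^{p+1}(\cF_T)$; and because $\widetilde\Sigma_{hp,\varepsilon}=\prod_{T\in\cT}\widetilde\Sigma_{hp,\varepsilon}(T)$ is a broken space contained in $\Sigma_h$, it suffices to construct, for each $T\in\cT$ separately, a linear map $\btau\mapsto\Pi_\Sigma\btau|_T$ with image in $\widetilde\Sigma_{hp,\varepsilon}(T)$ such that $\dual{(\btau-\Pi_\Sigma\btau)\cdot\normal_T}{\nu_{\partial T,j}}_{\partial T}=0$ for all $j$ and $\|\Pi_\Sigma\btau\|_{\div,\varepsilon,T}\lesssim\|\btau\|_{\div,\varepsilon,T}$ with a constant independent of $\varepsilon$ and $h_T$. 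Summation over $T$ then yields~\eqref{eq:def:fortin:dual}.

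For the construction I would follow \cite[Lemma~4.1 and Lemma~4.5]{FuehrerHeuerFortin}, distinguishing the two regimes built into $\widetilde\Sigma_{hp,\varepsilon}(T)$. When $\varepsilon\geq h_T$ one uses the construction of \cite[Lemma~4.1]{FuehrerHeuerFortin} with the face bubbles $\eeta_{\partial T,j}$ replacing the Raviart--Thomas basis functions: this gives a moment-preserving interpolation with image in $P^0(T)^d+\linhull\{\eeta_{\partial T,j}\}$, and since $h_T^2\lesssim\varepsilon^2$ the mesh-dependent terms $h_T^2\|\div(\cdot)\|_T^2$ appearing in the standard interpolation estimates are absorbed into the $\varepsilon^2\|\div(\cdot)\|_T^2$ part of the norm, so uniform $\|\cdot\|_{\div,\varepsilon,T}$-stability is a routine scaling argument. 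The genuinely delicate regime is $\varepsilon<h_T$. There I would set $\Pi_\Sigma\btau|_T=\Pi^1\btau|_T+\Pi^2(\btau-\Pi^1\btau)|_T$, with $\Pi^1$ a locally $\|\cdot\|_{\div,\varepsilon,T}$-stable low-order step — the componentwise $L_2$-projection onto $P^0(T)^d$ already serves, being trivially stable with divergence-free range — and $\Pi^2$ the modified-bubble correction: with $r:=\btau-\Pi^1\btau$, put $\Pi^2 r|_T=\sum_j c_j^T\,\eeta_{\partial T,\varepsilon,j}$ where the coefficients solve $\sum_j c_j^T\dual{\eeta_{\partial T,\varepsilon,j}\cdot\normal_T}{\nu_{\partial T,k}}_{\partial T}=\dual{r\cdot\normal_T}{\nu_{\partial T,k}}_{\partial T}$ for all $k$. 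Then $\Pi_\Sigma\btau|_T\in P^0(T)^d+\linhull\{\eeta_{\partial T,\varepsilon,j}\}\subseteq\widetilde\Sigma_{hp,\varepsilon}(T)\subseteq\Sigma_h$ (the further family $\eeta_{\partial T,j}$ present in the space is not needed for this argument), and $\dual{(\btau-\Pi_\Sigma\btau)\cdot\normal_T}{\nu_{\partial T,k}}_{\partial T}=\dual{(r-\Pi^2 r)\cdot\normal_T}{\nu_{\partial T,k}}_{\partial T}=0$ holds by construction, so the first requirement in~\eqref{eq:def:fortin:dual} is met. The local system is uniquely solvable because, by Lemma~\ref{lem:propEtaFe}, $\eeta_{\partial T,\varepsilon,j}$ and $\eeta_{\partial T,j}$ share the same normal trace on $\partial T$, so its matrix is the $\varepsilon$-independent, symmetric positive definite bubble-weighted mass matrix on $\partial T$ built from $\{\nu_{\partial T,j}|_{\partial T}\}$, whose affine pull-back to a reference element is a fixed SPD matrix; consequently $|c_j^T|\lesssim h_T^{-(d-1)}\max_k|\dual{r\cdot\normal_T}{\nu_{\partial T,k}}_{\partial T}|$.

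The step I expect to be the main obstacle is the $\varepsilon$- and $h_T$-robust bound $\|\Pi^2 r\|_{\div,\varepsilon,T}\lesssim\|r\|_{\div,\varepsilon,T}$, which is the heart of \cite[Lemma~4.5]{FuehrerHeuerFortin}. On the discrete side, Lemma~\ref{lem:propEtaFe} together with the Leibniz rule yields that \emph{both} $\|\eeta_{\partial T,\varepsilon,j}\|_T$ and $\varepsilon\|\div\eeta_{\partial T,\varepsilon,j}\|_T$ are $\lesssim|T|^{1/2}(\varepsilon/h_T)^{1/2}$, hence $\|\eeta_{\partial T,\varepsilon,j}\|_{\div,\varepsilon,T}\lesssim|T|^{1/2}(\varepsilon/h_T)^{1/2}$ — the exponential layer keeps the weighted divergence small even on coarse meshes, which is precisely the purpose of the modification. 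To secure the matching smallness on the data side, I would not estimate the moment $\dual{r\cdot\normal_T}{\nu_{\partial T,k}}_{\partial T}$ by a scaled trace inequality (which, one checks, loses a factor $(h_T/\varepsilon)^{1/2}$), but rewrite it, using the integration-by-parts identity defining $\trdiv_T$ — which depends only on the boundary values of its second argument — as $\ip{\div r}{\zeta_k}_T+\ip{r}{\nabla\zeta_k}_T$ for a \emph{layer-adapted} extension $\zeta_k\in H^1(T)$ of $\nu_{\partial T,k}|_{\partial T}$ carrying an exponential profile of width $\sim\varepsilon$, so that $\|\zeta_k\|_T$ and $\varepsilon\|\nabla\zeta_k\|_T$ are again $\lesssim|T|^{1/2}(\varepsilon/h_T)^{1/2}$; such an extension is built along the lines of the modified bubbles of Section~\ref{sec:expbubble} (cf.~\cite{FuehrerHeuerFortin}). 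Cauchy--Schwarz then gives $\varepsilon\,|\dual{r\cdot\normal_T}{\nu_{\partial T,k}}_{\partial T}|\lesssim|T|^{1/2}(\varepsilon/h_T)^{1/2}\|r\|_{\div,\varepsilon,T}$, and inserting this into the bound for $|c_j^T|$ and using $\|\eeta_{\partial T,\varepsilon,j}\|_{\div,\varepsilon,T}\lesssim|T|^{1/2}(\varepsilon/h_T)^{1/2}$ together with $|T|\eqsim h_T^d$, all powers of $\varepsilon$ and $h_T$ cancel and $\|\Pi^2 r\|_{\div,\varepsilon,T}\lesssim\|r\|_{\div,\varepsilon,T}$ follows; combined with the stability of $\Pi^1$ this gives $\|\Pi_\Sigma\btau\|_{\div,\varepsilon,T}\lesssim\|\btau\|_{\div,\varepsilon,T}$. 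The only real departure from \cite{FuehrerHeuerFortin} is that the $\eeta_{\partial T,j}$ replace the Raviart--Thomas basis functions used there; since the associated local problems stay uniformly well-posed after affine scaling, the cited arguments carry over unchanged. Summing over $T\in\cT$ establishes~\eqref{eq:def:fortin:dual}, whereupon Theorem~\ref{thm:discrete:dual} applies and the proof is finished.
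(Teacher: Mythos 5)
Your proposal follows the same route the paper takes: the paper offers no written proof of this theorem beyond the remark preceding it, namely that ``the same argumentations as in the proofs of \cite[Lemma~4.1 and Lemma~4.5]{FuehrerHeuerFortin}'' apply with the functions $\eeta_{\partial T,j}$ in place of Raviart--Thomas basis functions, and your localization, two-step construction $\Pi^1+\Pi^2(1-\Pi^1)$, and the matched scalings $\|\eeta_{\partial T,\varepsilon,j}\|_{\div,\varepsilon,T}\lesssim|T|^{1/2}(\varepsilon/h_T)^{1/2}$ versus $\varepsilon|\dual{r\cdot\normal_T}{\nu_{\partial T,k}}_{\partial T}|\lesssim|T|^{1/2}(\varepsilon/h_T)^{1/2}\|r\|_{\div,\varepsilon,T}$ (via a layer-adapted extension) are precisely the mechanism of those cited lemmas. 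So in substance you are reconstructing, correctly, the argument the paper delegates to the reference; the regime $\varepsilon<h_T$, which is the heart of the matter, is handled convincingly.

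One point deserves sharpening: your dispatch of the regime $\varepsilon\geq h_T$ as ``a routine scaling argument'' in which $h_T^2\|\div(\cdot)\|_T^2$ is absorbed into $\varepsilon^2\|\div(\cdot)\|_T^2$ is not sufficient as stated. Pure affine/Piola scaling of a reference-element bound yields $\|\Pi_\Sigma\btau\|_T+h_T\|\div\Pi_\Sigma\btau\|_T\lesssim\|\btau\|_T+h_T\|\div\btau\|_T$, and upgrading the left-hand divergence weight from $h_T$ to $\varepsilon\gg h_T$ multiplies the \emph{entire} right-hand side by $\varepsilon/h_T$, polluting the $\|\btau\|_T$ term. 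What rescues this regime is an ingredient you already have but do not invoke: since $r=\btau-\Pi^1\btau$ is $L_2$-orthogonal to $P^0(T)^d$ and (for $p=0$) $\nabla\nu_{\partial T,k}$ is constant, the identity $\dual{r\cdot\normal_T}{\nu_{\partial T,k}}_{\partial T}=\ip{\div r}{\nu_{\partial T,k}}_T+\ip{r}{\nabla\nu_{\partial T,k}}_T$ reduces to a pure divergence moment, bounded by $|T|^{1/2}\|\div\btau\|_T$ with no $h_T^{-1}\|r\|_T$ contribution; only then do the coefficient and bubble scalings combine to give $\|\Pi^2r\|_T+\varepsilon\|\div\Pi^2r\|_T\lesssim(h_T+\varepsilon)\|\div\btau\|_T\lesssim\varepsilon\|\div\btau\|_T$. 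You should make this step explicit (for $p\geq1$ it moreover requires orthogonality of $r$ to $P^p(T)^d$, i.e.\ a richer low-order step than projection onto constants, matching the Raviart--Thomas functions used in the cited reference). With that repair the argument is complete.
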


In~\cite[Section~4.3 and~4.4]{FuehrerHeuerFortin} some lowest-order cases ($p=0$) are considered. Define
\begin{align*}
  \widetilde \Sigma_{h,\varepsilon}(T) = \begin{cases}
    P^0(T)^d + \linhull\set{\eeta_{F,\varepsilon}}{F\in \cF_T} & \varepsilon <h_T, \\
    P^0(T)^d + \linhull\set{\eeta_{F}}{\, F\in \cF_T} & \varepsilon \geq h_T.
  \end{cases}
\end{align*}
The following result follows from the analysis given in~\cite[Section~4.3 and~4.4]{FuehrerHeuerFortin}.
\begin{theorem}
  Let $p=0$. The assertion of Theorem~\ref{thm:fortin:dual} hold true if $\widetilde \Sigma_{hp,\varepsilon}$ is replaced by $\widetilde \Sigma_{h,\varepsilon}: = \prod_{T\in\cT} \widetilde \Sigma_{h,\varepsilon}(T)$.
\end{theorem}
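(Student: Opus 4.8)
The plan is to reduce the claim to Theorem~\ref{thm:fortin:dual} by showing that, in the lowest-order case $p=0$, the smaller space $\widetilde\Sigma_{h,\varepsilon}$ already contains enough local degrees of freedom to reproduce the normal-trace interactions with $W_h$ and to satisfy the stability bound with an $\varepsilon$-robust constant. The key observation is that for $p=0$ the basis $\{\nu_{\partial T,j}\}$ of $P_c^{1}(\cF_T)$ consists (up to a change of basis) of the barycentric/hat functions restricted to $\partial T$, so the functions $\eeta_{\partial T,\varepsilon,j} = \eeta_{\partial T,\varepsilon}\nu_{\partial T,\varepsilon,j}$ span the same space as the $\eeta_{F,\varepsilon}$ once one also has $P^0(T)^d$ available; analogously in the non-singular regime $\varepsilon\ge h_T$. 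Hence $\widetilde\Sigma_{hp,\varepsilon}(T)$ and $\widetilde\Sigma_{h,\varepsilon}(T)$ differ only by elements lying in $P^0(T)^d + \linhull\{\eeta_{F}^T,\eeta_{F,\varepsilon}^T\}$, i.e.\ the \emph{relevant} part for constructing a Fortin operator coincides.

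First I would make precise the local Fortin construction underlying Theorem~\ref{thm:fortin:dual}: the operator $\Pi_\Sigma$ is built element by element as $\Pi_\Sigma\btau|_T = \Pi^0_T\btau + (\text{correction in }\linhull\{\eeta_{\partial T,\varepsilon,j}\})$, where the correction is chosen so that $\dual{(\btau-\Pi_\Sigma\btau)\cdot\normal_T}{w_h}_{\partial T}=0$ for all $w_h\in W_h|_{\partial T}$, and the $\varepsilon$-robust bound $\|\Pi_\Sigma\btau\|_{\div,\varepsilon,T}\lesssim\|\btau\|_{\div,\varepsilon,T}$ follows from Lemma~\ref{lem:propEtaFe}. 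For $p=0$, $W_h|_{\partial T}$ is spanned by the vertex hat-function traces, so the moments that must be matched are exactly $\dual{(\btau-\Pi_\Sigma\btau)\cdot\normal_T}{\eta_z^T}_{\partial T}$ for $z\in\cV_T$; these can be matched using the $|\cF_T|=d+1$ functions $\eeta_{F,\varepsilon}^T$ (one per facet), because the corresponding moment matrix is, by the same computation as in~\cite[Section~4.3 and~4.4]{FuehrerHeuerFortin}, invertible with $\varepsilon$-robustly bounded inverse. Therefore the correction can be taken in $\linhull\{\eeta_{F,\varepsilon}^T\,:\,F\in\cF_T\}\subseteq\widetilde\Sigma_{h,\varepsilon}(T)$, and the whole construction goes through verbatim with $\widetilde\Sigma_{hp,\varepsilon}$ replaced by $\widetilde\Sigma_{h,\varepsilon}$. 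The $\varepsilon\ge h_T$ branch is handled identically using the unmodified $\eeta_F^T$ and the standard polynomial estimates.

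Concretely, the steps in order are: (i) recall from~\cite{FuehrerHeuerFortin} the local characterization of $\Pi_\Sigma$ and the fact that only the normal traces on $\partial T$, tested against $W_h|_{\partial T}$, enter the Fortin conditions; (ii) specialize to $p=0$ and identify $W_h|_{\partial T}$ with the span of $\{\eta_z^T|_{\partial T}\}_{z\in\cV_T}$; (iii) show the local moment system $M_{z,F}=\dual{\eeta_{F,\varepsilon}^T\cdot\normal_T}{\eta_z^T}_{\partial T}$ (for $\varepsilon<h_T$; the analogue with $\eeta_F^T$ otherwise) is invertible with condition number bounded independently of $\varepsilon$ and $h_T$ — this is essentially re-reading~\cite[Section~4.3 and~4.4]{FuehrerHeuerFortin}; (iv) define the correction in $\linhull\{\eeta_{F,\varepsilon}^T\}$ accordingly and verify the robust $\|\cdot\|_{\div,\varepsilon,T}$ bound using Lemma~\ref{lem:propEtaFe}; (v) assemble globally to get $\Pi_\Sigma$ on $\widetilde\Sigma_{h,\varepsilon}$, hence on any $\Sigma_h\supseteq\widetilde\Sigma_{h,\varepsilon}$, and invoke Theorem~\ref{thm:discrete:dual}.

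The main obstacle I anticipate is step (iii): verifying that the local moment matrix built from the \emph{modified} facet bubbles $\eeta_{F,\varepsilon}^T$ (rather than the Raviart--Thomas functions used in~\cite{FuehrerHeuerFortin}) is uniformly well-conditioned. Since $\eta_{F,\varepsilon}^T$ and $\eta_F^T$ agree on $\partial T$ by Lemma~\ref{lem:propEtaFe}, the \emph{boundary} moments $\dual{\eeta_{F,\varepsilon}^T\cdot\normal_T}{\eta_z^T}_{\partial T}$ are in fact identical to those of the unmodified bubbles and hence $\varepsilon$-independent, so the conditioning question reduces to the already-resolved polynomial case; the only $\varepsilon$-dependence is confined to the interior $L_2$- and $\div$-norm estimates, which are exactly Lemma~\ref{lem:propEtaFe}. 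Once this is spelled out, the result is immediate. I would keep the write-up short: state that the argument is that of~\cite[Lemma~4.1 and Lemma~4.5]{FuehrerHeuerFortin} together with the low-order simplifications of~\cite[Section~4.3 and~4.4]{FuehrerHeuerFortin}, observing that for $p=0$ the correction space $\linhull\{\eeta_{\partial T,\varepsilon,j}\}$ may be replaced by $\linhull\{\eeta_{F,\varepsilon}^T\}$ without affecting any of the estimates.
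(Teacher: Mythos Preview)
Your concrete construction in steps (i)--(v) is correct and is exactly what the paper intends: it simply defers to the analysis in \cite[Section~4.3 and~4.4]{FuehrerHeuerFortin}, and your write-up reconstructs that argument faithfully, including the crucial observation that the boundary moments $\dual{\eeta_{F,\varepsilon}^T\cdot\normal_T}{\eta_z^T}_{\partial T}$ are $\varepsilon$-independent because $\eta_{F,\varepsilon}^T|_{\partial T}=\eta_F^T|_{\partial T}$.

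One correction, though: your opening paragraph claims that $\linhull\{\eeta_{\partial T,\varepsilon,j}\}$ and $\linhull\{\eeta_{F,\varepsilon}^T\}$ coincide modulo $P^0(T)^d$, and that this lets you \emph{reduce} to Theorem~\ref{thm:fortin:dual}. That is false: the functions $\eeta_{\partial T,\varepsilon,j}=\eeta_{\partial T,\varepsilon}\nu_{\partial T,j}$ carry an additional linear factor $\nu_{\partial T,j}$ and are genuinely of higher polynomial degree than the $\eeta_{F,\varepsilon}^T$, so neither space contains the other. Fortunately your actual argument never uses this claim---you build the Fortin operator directly in $\widetilde\Sigma_{h,\varepsilon}(T)$ rather than appealing to Theorem~\ref{thm:fortin:dual}---so simply drop that paragraph and keep the direct construction.
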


\subsection{Alternative for modified face bubbles}\label{sec:expbubble:alt}
In this section we present an alternative to the face bubble functions with exponential layer introduced in Section~\ref{sec:expbubble} above. 
For $T\in\cT$ and $(x,y)\in T$, $F\in\cF_T$ set
\begin{align*}
  \widetilde\eta_{F,\varepsilon}^T(x,y) = \begin{cases}
    \eta_F^T(x,y) \left(1-\frac{h_T}\varepsilon d_F^T(x,y)\right) & \text{if }d_F^T(x,y)<\tfrac\varepsilon{h_T}, \\
    0 & \text{if }d_F^T(x,y)\geq \tfrac\varepsilon{h_T}.
  \end{cases}
\end{align*}
Note that $\widetilde\eta_{F,\varepsilon}^T$ is a piecewise polynomial and the term $1-\frac{h_T}\varepsilon d_F^T(x,y)$ can be seen as a first-order approximation of the exponential function $\exp(-h_T/\varepsilon d_F^T(x,y))$.
Figure~\ref{fig:bubblesNew} visualizes differences between $\widetilde\eta_{F,\varepsilon}^T$ and $\eta_{F,\varepsilon}^T$ for some values of $\varepsilon$ on the element $T$ with vertices $(0,0)$, $(1,0)$, $(0,1)$ and $F = (0,1)\times \{0\}$.

\begin{figure}
  \begin{center}
    \includegraphics[width=\textwidth]{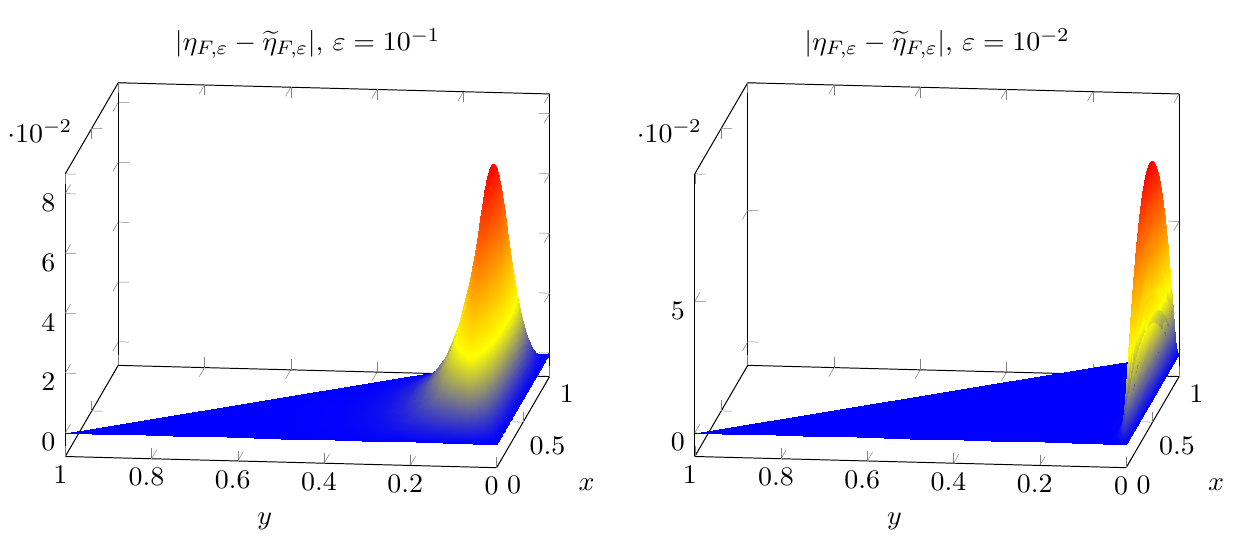}
  \end{center}
  \caption{Comparison of face bubble functions with exponential resp. polynomial layers defined in Section~\ref{sec:expbubble} resp.~\ref{sec:expbubble:alt} for different values of $\varepsilon$.}
  \label{fig:bubblesNew}
\end{figure}

The next result is essential for the analysis and states that $\widetilde\eta_{F,\varepsilon}^T$ has the same (scaling) properties as $\eta_{F,\varepsilon}^T$, cf. Lemma~\ref{lem:propEtaFe}. 
\begin{lemma}\label{ref:lem:propEtaFe:alt}
  Suppose that $T\in\cT$, $F\in\cF_T$. Then, $\widetilde\eta_{F,\varepsilon}^T\in H^1(T)$ and $\widetilde\eta_{F,\varepsilon}^T|_{\partial T} = \eta_F^T|_{\partial T}$. If $\varepsilon\lesssim h_T$ then
  \begin{align*}
    \|\widetilde\eta^T_{F,\varepsilon}\|_T \lesssim |T|^{1/2}\left(\frac{\varepsilon}{h_T}\right)^{1/2}, \qquad
    \|\nabla\widetilde\eta^T_{F,\varepsilon}\|_T \lesssim \frac{|T|^{1/2}}{h_T} \left(\frac{h_T}\varepsilon\right)^{1/2}.
  \end{align*}
\end{lemma}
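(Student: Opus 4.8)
The claim has three parts: membership $\widetilde\eta_{F,\varepsilon}^T\in H^1(T)$, the boundary identity $\widetilde\eta_{F,\varepsilon}^T|_{\partial T}=\eta_F^T|_{\partial T}$, and the two scaling bounds. The strategy is to treat the function as a continuous piecewise polynomial glued along the level set $\{d_F^T=\varepsilon/h_T\}$ and then estimate on each of the two pieces separately, using the explicit structure of the barycentric factors. For the $H^1$-membership, I would observe that $\widetilde\eta_{F,\varepsilon}^T$ is a polynomial on the subregion $T_1:=\{d_F^T<\varepsilon/h_T\}$ (where it equals $\eta_F^T(1-\tfrac{h_T}\varepsilon d_F^T)$) and identically zero on $T_2:=\{d_F^T\ge\varepsilon/h_T\}$; since $\eta_F^T=\prod_{z\neq z_F^T}\eta_z^T$ vanishes on all facets other than $F$, and on the interface $d_F^T=\varepsilon/h_T$ the factor $1-\tfrac{h_T}\varepsilon d_F^T$ vanishes, the two polynomial pieces agree (both equal to the zero trace) along the interface, so $\widetilde\eta_{F,\varepsilon}^T$ is globally Lipschitz on $T$, hence in $H^1(T)$ with weak gradient given piecewise. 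The boundary identity is immediate: on $\partial T$, either we are on $F$ where $d_F^T=0$ and the correction factor is $1$, or we are on a facet $F'\neq F$ where $\eta_F^T$ itself vanishes, and in either case $\widetilde\eta_{F,\varepsilon}^T=\eta_F^T$.

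For the scaling bounds (assuming $\varepsilon\lesssim h_T$, so that $T_1$ is the slab $0\le d_F^T<\varepsilon/h_T$), I would change variables to the reference element $\widehat T$ via the affine map to $T$, writing $d_F^T$ as the barycentric coordinate $\widehat d$ on $\widehat T$, with Jacobian $|T|/|\widehat T|\eqsim|T|$. On $\widehat T$, $\eta_F^T$ pulls back to the reference face bubble $\widehat\eta_F$ (bounded by $1$) and the correction factor becomes $1-(h_T/\varepsilon)\widehat d$, supported where $\widehat d< \varepsilon/h_T=:\delta$. The key geometric observation is that the slab $\{0\le\widehat d<\delta\}$ in $\widehat T$ has measure $\lesssim\delta$ (it is a thin layer of width $\delta$ parallel to the reference face). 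Since $|1-(h_T/\varepsilon)\widehat d|\le 1$ on this slab, one gets $\|\widehat\eta_{F,\varepsilon}\|_{\widehat T}^2\lesssim\delta$, and transforming back gives $\|\widetilde\eta_{F,\varepsilon}^T\|_T^2\lesssim|T|\,\delta=|T|\,\varepsilon/h_T$, which is the first bound. For the gradient, by the chain/product rule $\nabla\widetilde\eta_{F,\varepsilon}^T=\nabla\eta_F^T\,(1-\tfrac{h_T}\varepsilon d_F^T)-\tfrac{h_T}\varepsilon\eta_F^T\,\nabla d_F^T$ on $T_1$; the first term is bounded since $|\nabla\eta_F^T|\lesssim h_T^{-1}$ and $|1-\tfrac{h_T}\varepsilon d_F^T|\le1$, and the dominant second term is $\lesssim(h_T/\varepsilon)\,h_T^{-1}$ in sup-norm (using $|\nabla d_F^T|\lesssim h_T^{-1}$), while again integrating over the thin slab of measure $\lesssim|T|\,\varepsilon/h_T$; combining, $\|\nabla\widetilde\eta_{F,\varepsilon}^T\|_T^2\lesssim (h_T/\varepsilon)^2 h_T^{-2}\cdot|T|\,\varepsilon/h_T=|T|\,h_T^{-2}\,(h_T/\varepsilon)$, which is exactly the claimed bound after taking square roots.

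\textbf{Main obstacle.} The one genuinely delicate point is justifying the $H^1$-regularity across the interface $\{d_F^T=\varepsilon/h_T\}$ and, relatedly, making the ``thin slab of measure $\lesssim\varepsilon/h_T$'' estimate uniform in the shape of $T$. For the regularity, the cleanest route is the standard fact that a function which is piecewise $W^{1,\infty}$ on a partition by Lipschitz interfaces and globally continuous lies in $H^1$ — continuity here follows because both pieces have the matching trace (zero from $T_2$, and $\eta_F^T(1-\tfrac{h_T}\varepsilon d_F^T)$ from $T_1$, which vanishes on the interface by definition of the cutoff). For the measure estimate, one works on the reference element where the slab $\{0\le\widehat d<\delta\}$ is literally $\{(\widehat x_1,\ldots): \widehat x_j<\delta\}\cap\widehat T$ for the appropriate coordinate, whose volume is an explicit polynomial in $\delta$ bounded by $C\delta$ for $\delta\le1$; shape-regularity of $T$ then transfers this to $T$ with a constant depending only on the shape-regularity parameter. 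Everything else is a routine product-rule computation parallel to the proof of Lemma~\ref{lem:propEtaFe} in~\cite{FuehrerHeuerFortin}.
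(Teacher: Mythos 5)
Your proposal is correct and follows essentially the same route as the paper's proof: continuity of the two polynomial pieces across the level set $\{d_F^T=\varepsilon/h_T\}$ gives $H^1$-membership, the boundary identity follows from $d_F^T|_F=0$ and $\eta_F^T|_{F'}=0$ for $F'\neq F$, and both scaling bounds come from bounding the function (resp.\ its piecewise gradient, via the product rule with $|\nabla\eta_F^T|,|\nabla d_F^T|\lesssim h_T^{-1}$) in sup-norm and integrating over the thin slab of measure $\eqsim|T|\,\varepsilon/h_T$. Your detour through the reference element is only a cosmetic difference from the paper's direct computation on $T$.
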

\begin{proof}
  Since $\widetilde\eta_{F,\varepsilon}^T$ is defined as elementwise polynomial and is continuous across the line given by $d_F^T(x,y) = \varepsilon/h_T$ we have that $\widetilde\eta_{F,\varepsilon}^T\in H^1(T)$.
  Then,  $\widetilde\eta_{F,\varepsilon}|_{\partial T} = \eta_F^T|_{\partial T}$ follows since $d_F^T(x,y)=0$ for $(x,y)\in F$ and $\eta_{F}^T|_{F'}=0$ for $F'\in\cF_T\setminus\{F\}$.

  Let $T_\varepsilon = \set{(x,y)}{d_F^T(x,y)<\varepsilon/h_T}$ and note that $|T_\varepsilon| \eqsim h_T^{d-1}\varepsilon$.
  The function $|\widetilde\eta_{F,\varepsilon}^T|$ is bounded by one, therefore, 
  \begin{align*}
    \|\widetilde\eta_{F,\varepsilon}^T\|_T^2 \leq \|1\|_{T_\varepsilon}^2 \eqsim h_T^{d-1}\varepsilon \eqsim |T| \frac{\varepsilon}{h_T}
  \end{align*}
  For the estimation of the norm of the gradient we apply the triangle inequality, product rule, scaling arguments and the fact that $\varepsilon\lesssim h_T$ to find that
  \begin{align*}
    \|\nabla\widetilde\eta_{F,\varepsilon}^T\|_T &\lesssim \|\nabla\eta_F^T\|_{T_\varepsilon} 
    + \frac{h_T}\varepsilon\|d_F^T\nabla \eta_F^T\|_{T_\varepsilon} 
    + \frac{h_T}\varepsilon\|\eta_{F}^T\nabla d_F^T\|_{T_\varepsilon}
    \\
    &\lesssim h_T^{-1}(h_T^{d-1}\varepsilon)^{1/2} + \frac{h_T}{\varepsilon} h_T^{-1} (h_T^{d-1}\varepsilon)^{1/2}
    \lesssim \left(\frac{h_T}{\varepsilon}\right)^{1/2} \frac{1}{h_T} |T|^{1/2}.
  \end{align*}
  This finishes the proof.
\end{proof}

Following the technical analysis from~\cite{FuehrerHeuerFortin} replacing the functions $\eta_{F,\varepsilon}^T$ by $\widetilde\eta_{F,\varepsilon}^T$ shows the next result.
\begin{corollary}
  All results from Section~\ref{sec:uFortin} and~\ref{sec:sigmaFortin} hold true with $\eta_{F,\varepsilon}^T$ replaced by $\widetilde\eta_{F,\varepsilon}^T$ in all definitions of Section~\ref{sec:expbubble}.
\end{corollary}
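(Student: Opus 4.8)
The plan is to reduce the statement to the single observation that the proofs of the results in Sections~\ref{sec:uFortin} and~\ref{sec:sigmaFortin}, which are taken over from~\cite{FuehrerHeuerFortin}, use only two structural properties of the modified face bubble functions of Section~\ref{sec:expbubble}: (i) that their trace on $\partial T$ agrees with that of the corresponding polynomial bubble, $\eta_{F,\varepsilon}^T|_{\partial T}=\eta_F^T|_{\partial T}$; and (ii) the scaling bounds $\|\eta_{F,\varepsilon}^T\|_T\lesssim|T|^{1/2}(\varepsilon/h_T)^{1/2}$ and $\|\nabla\eta_{F,\varepsilon}^T\|_T\lesssim|T|^{1/2}h_T^{-1}(h_T/\varepsilon)^{1/2}$ of Lemma~\ref{lem:propEtaFe}, valid for $\varepsilon\lesssim h_T$. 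Lemma~\ref{ref:lem:propEtaFe:alt} establishes exactly (i) and (ii) for $\widetilde\eta_{F,\varepsilon}^T$, together with $\widetilde\eta_{F,\varepsilon}^T\in H^1(T)$; for $\varepsilon\geq h_T$ both variants use the purely polynomial bubbles $\eta_{F,j}^T$, so nothing changes there. It therefore remains to re-run~\cite[Lemmas~3.1 and~3.6]{FuehrerHeuerFortin} and~\cite[Lemmas~4.1 and~4.5]{FuehrerHeuerFortin} (as well as~\cite[Sections~4.3--4.4]{FuehrerHeuerFortin} for the lowest-order space $\widetilde\Sigma_{h,\varepsilon}$) with $\eta_{F,\varepsilon}^T$ replaced by $\widetilde\eta_{F,\varepsilon}^T$ in the definitions of Section~\ref{sec:expbubble}.

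First I would check that the higher-order and vector-valued functions built from $\widetilde\eta_{F,\varepsilon}^T$ inherit the properties (i)--(ii). The products $\widetilde\eta_{F,\varepsilon}^T\chi_{F,j}^T$ and the vector-valued analogues $\widetilde\eta_{F,\varepsilon}^T\normal_T|_F$ (and their products with the fixed polynomials $\nu_{\partial T,j}$) are again $\cT$-piecewise polynomials that stay continuous across the interface $\{d_F^T=\varepsilon/h_T\}$, hence they lie in $H^1(T)$, resp. $\Hdivset{T}$; their boundary traces coincide with those of $\eta_{F,j}^T$, resp. $\eeta_{\partial T,j}$, because the traces of the bubbles coincide on $\partial T$; and the scaling bounds carry over by the product rule together with the standard estimates $\|\chi_{F,j}^T\|_{L_\infty(T)}\lesssim 1$ and $\|\nabla\chi_{F,j}^T\|_{L_\infty(T)}\lesssim h_T^{-1}$ (and likewise for $\nu_{\partial T,j}$), exactly as in the proof of Lemma~\ref{ref:lem:propEtaFe:alt}. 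Consequently the enriched local spaces $\widetilde U_{hp,\varepsilon}(T)$, $\widetilde\Sigma_{hp,\varepsilon}(T)$ and $\widetilde\Sigma_{h,\varepsilon}(T)$ formed with $\widetilde\eta_{F,\varepsilon}^T$ are admissible subspaces of $H^1(T)$, resp. $\Hdivset{T}$, and the Fortin operators $\Pi_U$ and $\Pi_\Sigma$ are then constructed element by element verbatim as in~\cite{FuehrerHeuerFortin}.

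The part that actually needs care -- and the main obstacle -- is to confirm that those imported proofs really invoke nothing about $\eta_{F,\varepsilon}^T$ beyond (i) and (ii); in particular that they do not use pointwise nonnegativity or the explicit exponential profile. The elementwise definition of $\Pi_U$ (resp. $\Pi_\Sigma$) amounts, on each $T$, to a constrained projection whose well-posedness rests on a local unisolvence / inf--sup condition: the functionals $v\mapsto\dual{\lambda_h}{v}_{\partial T}$ (resp. $\btau\mapsto\dual{\btau\cdot\normal}{w_h}_{\partial T}$), restricted to the bubble part of the enriched space, must reproduce a basis of $\Lambda_h|_{\partial T}$ (resp. of the relevant local trace space). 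Since these functionals depend only on boundary values and $\widetilde\eta_{F,\varepsilon}^T|_{\partial T}=\eta_F^T|_{\partial T}$, the associated local matrix -- and hence the local inf--sup constant -- is literally the same as in the polynomial case, in particular independent of $\varepsilon$ and $h$. Boundedness of the operator in the $\varepsilon$-weighted norms then follows word for word from Lemma~\ref{ref:lem:propEtaFe:alt}, since $\|\widetilde\eta_{F,\varepsilon}^T\|_{1,\varepsilon,T}$ (resp. $\|\widetilde\eta_{F,\varepsilon}^T\normal_T|_F\|_{\div,\varepsilon,T}$) obeys the same balanced bound $\lesssim|T|^{1/2}(\varepsilon/h_T)^{1/2}$ that drives the argument in~\cite{FuehrerHeuerFortin}. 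A direct inspection of~\cite[Lemmas~3.1, 3.6, 4.1, 4.5]{FuehrerHeuerFortin} shows no further property of the bubbles is used, so the substitution is legitimate and all statements of Sections~\ref{sec:uFortin} and~\ref{sec:sigmaFortin} remain valid.
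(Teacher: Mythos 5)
Your proposal is correct and follows essentially the same route as the paper, which proves this corollary simply by observing that Lemma~\ref{ref:lem:propEtaFe:alt} gives $\widetilde\eta_{F,\varepsilon}^T$ the same trace and scaling properties as Lemma~\ref{lem:propEtaFe} gives $\eta_{F,\varepsilon}^T$, so the technical analysis of~\cite{FuehrerHeuerFortin} carries over verbatim. Your write-up merely makes explicit (and usefully so) the two points the paper leaves implicit: that the higher-order and vector-valued bubbles built from $\widetilde\eta_{F,\varepsilon}^T$ inherit these properties, and that the local unisolvence underlying the Fortin construction depends only on boundary traces, which are unchanged.
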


\section{A posteriori error analysis}\label{sec:apost}
In this section we derive some simple error estimators that can be used to steer adaptive algorithms. 
The estimators are based on an additional Fortin property. For the sake of a simpler presentation of the main arguments we restrict to some lowest-order discretizations in this section though we stress that the ideas can be applied for arbitrary order discretizations as well. 

\subsection{Auxiliary results}
In this section we collect some results that will be used in the analysis given in Section~\ref{sec:apost:dglm} and~\ref{sec:apost:dualdglm} below. 

Let $\Pi_T^p\colon L_2(T)\to P^p(T)$, $\Pi_\cT^p\colon L_2(\Omega)\to P^p(\cT)$, $\Pi_F^p\colon L_2(F)\to P^p(F)$ be the $L_2$ orthogonal projections and $h_\cT\in L_\infty(\Omega)$, $h_\cT|_T = h_T$ ($T\in\cT$) the mesh-width function.
With $\Omega_T\subseteq \Omega$, $\Omega_F\subseteq\Omega$ we denote neighborhoods of $T\in\cT$, resp. $F\in \cF$, 
\begin{align*}
  \Omega_T &= \mathrm{int}\left(\bigcup_{T'\in\cT, \,\overline{T}\cap\overline{T'}\neq \emptyset} \overline{T'}\right), \quad
  \Omega_F = \mathrm{int}\left(\bigcup_{T\in\cT, \,F\in\cF_T} \overline{T}\right).
\end{align*}
Here, $\mathrm{int}(\cdot)$ denotes the interior of a set and $\cF$ is the set of all facets of $\cT$.

We also use the trace inequality, cf.~\cite[Lemma~1.6.6]{BrennerScott},
\begin{align*}
  \|v\|_{\partial T} \lesssim \frac{1}{h_T^{1/2}} \|v\|_T^{1/2}(\|v\|_T+h_T\|\nabla v\|_T)^{1/2} \lesssim \frac{1}{h_T^{1/2}}\|v\|_T + h_T^{1/2}\|\nabla v\|_T \quad\forall v\in H^1(T).
\end{align*}
The first estimate also implies that
\begin{align*}
  \|(1-\Pi_F^0)v\|_F \leq \|(1-\Pi_T^0)v\|_{\partial T} \lesssim \|v\|_T^{1/2}\|\nabla v\|_T^{1/2}.
\end{align*}

Let $J_\cT\colon L_2(\Omega) \to P^1(\cT)\cap H^1(\Omega)$ denote a quasi-interpolation with the properties
\begin{align}\label{eq:propquasiint}
  \|J_{\cT}v\|_T\lesssim \|v\|_{\Omega_T}, \quad \|v-J_\cT v\|_T \lesssim h_T\|\nabla v\|_{\Omega_T}, \quad 
  \|\nabla J_\cT v\|_T \lesssim \|\nabla v\|_{\Omega_T} \quad\forall \, T\in\cT
\end{align}
and all $v\in H^1(\Omega)$.
An example of such an operator is the Cl\'ement quasi-interpolator~\cite{Clement75}.
With the trace inequality it follows that
\begin{align*}
  \|v-J_\cT v\|_{\partial T} \lesssim h_T^{1/2}\|\nabla v\|_{\Omega_T} \quad\forall v\in H^1(\Omega), \, T\in\cT.
\end{align*}
Furthermore, let $J_{\cT,0}\colon L_2(\Omega)\to P^1(\cT)\cap H_0^1(\Omega)$ denote a quasi-interpolation with the same properties as $J_\cT$ given above, i.e.,~\eqref{eq:propquasiint} holds for all $v\in H_0^1(\Omega)$. Again, we can use the Cl\'ement quasi-interpolator with vanishing boundary values.

Given $F\in\cF$ we denote by $\jump{v}|_F$ the jump of $v$ across the interface $F$. Similarly, $\jump{\btau\cdot\normal}|_F$ denotes the jump in normal direction and $\jump{\btau\times \normal}|_F$ the jump of the tangent component.
If $F$ is a boundary facet then the jump reduces to the trace on the boundary, i.e., $\jump{v}|_F = v|_F$.

\subsection{Error estimator for the PHFEM}\label{sec:apost:dglm}
Throughout this section we consider for each $T\in\cT$, 
\begin{align*}
  U_{h,\varepsilon}(T) = \begin{cases}
    P^1(T) + \linhull\set{\eta_{F,\varepsilon}^T}{F\in\cF_T} + \linhull\{\eta_T\} & \text{if } \varepsilon<h_T, \\
    P^1(T) + \linhull\set{\eta_{F}^T}{F\in\cF_T} + \linhull\{\eta_T\} & \text{if } \varepsilon\geq h_T, \\
  \end{cases}
\end{align*}
where $\eta_T = \prod_{z\in\cV_T} \eta_z^T$ is the element bubble function.
Then, define the product space
\begin{align*}
  U_h=U_{h,\varepsilon} = \prod_{T\in\cT} U_{h,\varepsilon}(T).
\end{align*}
We also consider space $\Lambda_h$ for $p=0$, i.e., $\Lambda_h = P^0(\partial\cT)$. Let us briefly comment on the choice of the local space $U_{h,\varepsilon}(T)$. We have included also polynomials of degree $1$ to ensure that gradients can be approximated (particularly, when $\varepsilon\gtrsim h_T$), the functions $\eta_{F,\varepsilon}^T$ resp. $\eta_{F}^T$ are included to ensure stability of the discrete scheme, cf. Section~\ref{sec:stability} and, finally, the element bubble function is included to ensure the additional orthogonality property given in Theorem~\ref{thm:fortin:aug} below. 
In particular, the next result follows by combining~\cite[Corollary~3.3 and Theorem~3.7]{FuehrerHeuerFortin}.
\begin{theorem}\label{thm:fortin:aug}
  There exists $\Pi_U\colon U\to U_{h}$ satisfying~\eqref{eq:def:fortin} with constant $C_U$ independent of $\varepsilon$ and $h$, and satisfying the additional property
  \begin{align*}
    \ip{1}{v-\Pi_Uv}_T = 0 \quad\forall T\in \cT, \,v\in U.
  \end{align*}
  \qed
\end{theorem}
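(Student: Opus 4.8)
The plan is to construct $\Pi_U$ as a two-stage operator: first a stabilized operator $\widetilde\Pi_U$ satisfying the Fortin conditions~\eqref{eq:def:fortin} — which exists by the already-cited results from~\cite{FuehrerHeuerFortin} applied to the space $\widetilde U_{hp,\varepsilon}$ with $p=0$ (note $\widetilde U_{h0,\varepsilon}(T)\subseteq U_{h,\varepsilon}(T)$, so the Fortin property is inherited) — and then a correction that enforces the extra mean-value orthogonality $\ip{1}{v-\Pi_Uv}_T=0$ using the element bubble functions $\eta_T$, which live in $U_{h,\varepsilon}(T)$ but do \emph{not} affect the trace conditions since $\eta_T|_{\partial T}=0$. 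This is precisely the structure of~\cite[Corollary~3.3 and Theorem~3.7]{FuehrerHeuerFortin}, so the proof is mainly bookkeeping to check that their hypotheses are met by our slightly different local spaces.

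First I would set, for each $T\in\cT$, the local correction
\begin{align*}
  \Pi_U v|_T = \widetilde\Pi_U v|_T + c_T(v)\,\eta_T, \qquad
  c_T(v) = \frac{\ip{1}{v-\widetilde\Pi_U v}_T}{\ip{1}{\eta_T}_T},
\end{align*}
so that $\ip{1}{v-\Pi_U v}_T = \ip{1}{v-\widetilde\Pi_U v}_T - c_T(v)\ip{1}{\eta_T}_T = 0$ by construction; here $\ip{1}{\eta_T}_T>0$ since $\eta_T\ge 0$ and $\eta_T\not\equiv 0$. Since $\eta_T|_{\partial T}=0$ we have $\dual{\lambda_h}{\eta_T}_{\partial T}=0$ for all $\lambda_h\in\Lambda_h$, hence $b(\lambda_h,v-\Pi_U v)=b(\lambda_h,v-\widetilde\Pi_U v)=0$, so the first condition in~\eqref{eq:def:fortin} is preserved. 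It remains to bound $\|\Pi_U v\|_{U,\varepsilon}$, which reduces to bounding $|c_T(v)|\,\|\eta_T\|_{1,\varepsilon,T}$.

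For the stability bound I would estimate $|c_T(v)| \le |T|^{1/2}\|v-\widetilde\Pi_U v\|_T / \ip{1}{\eta_T}_T \lesssim |T|^{-1/2}\|v-\widetilde\Pi_U v\|_T$ using $\ip{1}{\eta_T}_T\eqsim|T|$, and then $\|v-\widetilde\Pi_U v\|_T \le \|v\|_T + \|\widetilde\Pi_U v\|_T \lesssim \|v\|_{1,\varepsilon,T}$ (using the $L_2$-part of the $\widetilde\Pi_U$ bound — note $\|w\|_T\le\|w\|_{1,\varepsilon,T}$). Standard scaling gives $\|\eta_T\|_T\lesssim|T|^{1/2}$ and $\|\nabla\eta_T\|_T\lesssim|T|^{1/2}h_T^{-1}$, so $\|\eta_T\|_{1,\varepsilon,T}=(\|\eta_T\|_T^2+\varepsilon^2\|\nabla\eta_T\|_T^2)^{1/2}\lesssim|T|^{1/2}(1+\varepsilon^2 h_T^{-2})^{1/2}\lesssim|T|^{1/2}$ because $\varepsilon\le 1$ and we may assume $h_T\lesssim 1$ — actually more carefully one only needs $\varepsilon/h_T\lesssim 1$ in the layer regime and $\varepsilon\le 1$ always, but in any case $\varepsilon\|\nabla\eta_T\|_T\lesssim\varepsilon h_T^{-1}|T|^{1/2}\lesssim h_T^{-1}|T|^{1/2}$, and combined with $|c_T(v)|\lesssim|T|^{-1/2}\|v\|_{1,\varepsilon,T}$ this does \emph{not} immediately close unless we are more careful with the $\varepsilon$-weighting. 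The cleaner route, which I would actually take, is to bound $\|v-\widetilde\Pi_U v\|_T$ by an $\varepsilon$-weighted local quantity; one uses that $\widetilde\Pi_U$ can be chosen so that $\|(1-\widetilde\Pi_U)v\|_T\lesssim\min\{1,(h_T/\varepsilon)^{1/2}\}^{-1}\cdots$ — here I would simply invoke the sharper local estimates established in~\cite{FuehrerHeuerFortin} that make the bubble correction bounded, exactly as in their Corollary~3.3, rather than re-deriving them. Summing $\|\Pi_U v\|_{1,\varepsilon,T}^2\lesssim\|v\|_{1,\varepsilon,T}^2$ over $T$ yields $\|\Pi_U v\|_{U,\varepsilon}\le C_U\|v\|_{U,\varepsilon}$.

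The main obstacle is ensuring the bubble-correction term is bounded \emph{uniformly in $\varepsilon$}: naively, $\varepsilon\|\nabla\eta_T\|_T$ carries a factor $\varepsilon/h_T$ which is harmless, but the coefficient $c_T(v)$ must be controlled by the correct $\varepsilon$-weighted norm of $v$, and this is only possible because the first-stage operator $\widetilde\Pi_U$ already has an $L_2$-approximation property with the right $\varepsilon$-scaling on elements where $\varepsilon<h_T$ — this is the content of the cited lemmas and is precisely why the exponential (or polynomial-layer) face bubbles were introduced. Since the excerpt permits us to assume everything proved earlier and the theorem is explicitly attributed to~\cite[Corollary~3.3 and Theorem~3.7]{FuehrerHeuerFortin}, the honest proof is: verify $\widetilde U_{h0,\varepsilon}(T)+\linhull\{\eta_T\}\subseteq U_{h,\varepsilon}(T)$, invoke the existence of $\widetilde\Pi_U$, perform the bubble correction above, and cite the $\varepsilon$-uniform local bounds from~\cite{FuehrerHeuerFortin} to conclude boundedness.
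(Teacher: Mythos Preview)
Your proposal is correct and aligns with the paper's approach: the paper gives no proof at all beyond citing \cite[Corollary~3.3 and Theorem~3.7]{FuehrerHeuerFortin}, and your sketch (base Fortin operator $\widetilde\Pi_U$ into $\widetilde U_{h0,\varepsilon}\subseteq U_h$, followed by an element-bubble correction to enforce the mean-value constraint, with $\eta_T|_{\partial T}=0$ preserving the trace orthogonality) is precisely the construction behind those cited results. One remark: your stability discussion wanders a bit before you (correctly) defer to the cited estimates; the clean way to close the bound $|c_T(v)|\,\varepsilon\|\nabla\eta_T\|_T\lesssim\|v\|_{1,\varepsilon,T}$ is to use that the first-stage operator from \cite{FuehrerHeuerFortin} satisfies a local approximation estimate of the form $\|v-\widetilde\Pi_U v\|_T\lesssim \min\{h_T,\varepsilon\}\|\nabla v\|_T + \|v\|_T$ in the appropriate regimes, which absorbs the factor $\varepsilon h_T^{-1}$ when $\varepsilon\geq h_T$ and is harmless when $\varepsilon<h_T$.
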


For the remainder of this section let $(u_h,\lambda_h)\in U_{h}\times \Lambda_h$ denote the unique solution of~\eqref{eq:dglm:disc}.
Define for each $T\in\cT$ the (squared) indicator
\begin{align*}
  \rho(T)^2 &= \|(1-\Pi_T^0)(u_h-f)\|_T^2 + \varepsilon^2\|(1-\Pi_T^0)\nabla u_h\|_T^2 + \varepsilon\|\jump{u_h}\|_{\partial T}^2
  +\varepsilon^2 h_T \|\jump{\nabla u_h\times\normal}\|_{\partial T}^2.
\end{align*}
Note that $\rho(T)$ does not involve the approximation $\lambda_h$ of the Lagrange multiplier $\lambda$. 

In the next results we analyze reliability and efficiency of the estimator 
\begin{align*}
  \rho^2 = \sum_{T\in\cT} \rho(T)^2.
\end{align*}

\begin{theorem}\label{thm:dglm:apost}
  Let $(u,\lambda)\in U\times \Lambda$ and $(u_h,\lambda_h)\in U_h\times \Lambda_h$ denote the solutions of~\eqref{eq:dglm} and~\eqref{eq:dglm:disc}, respectively. 
  If $\Omega$ is convex, then estimator $\rho$ is reliable, i.e., there exists a constant $C_\mathrm{rel}$ independent of $\varepsilon$, $h$, $(u_h,\lambda_h)$ such that
  \begin{align*}
    \|u-u_h\|_{U,\varepsilon} + \|\lambda-\lambda_h\|_{\Lambda,\varepsilon} \leq C_\mathrm{rel} \rho.
  \end{align*}
\end{theorem}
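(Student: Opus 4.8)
The plan is to follow the standard residual-based a posteriori machinery for mixed methods, exploiting the quasi-optimality from Theorem~\ref{thm:discrete} together with the special orthogonality property of the augmented Fortin operator $\Pi_U$ from Theorem~\ref{thm:fortin:aug}. Since $a(\cdot,\cdot)$ induces the norm $\|\cdot\|_{U,\varepsilon}$ and the $\inf$--$\sup$ condition holds with $\varepsilon$-independent constant, the total error $\|u-u_h\|_{U,\varepsilon}+\|\lambda-\lambda_h\|_{\Lambda,\varepsilon}$ is (up to a constant) bounded by the dual norm of the residual of the mixed system~\eqref{eq:dglm}. Concretely, there is $(v,\mu)\in U\times\Lambda$ with $\|v\|_{U,\varepsilon}+\|\mu\|_{\Lambda,\varepsilon}\lesssim 1$ realizing the error norm up to a constant, and one estimates
\begin{align*}
  a(u-u_h,v)+b(\lambda-\lambda_h,v)+b(\mu,u-u_h) &= \ip{f}{v}_\Omega - a(u_h,v) - b(\lambda_h,v) - b(\mu,u_h).
\end{align*}
The term $b(\mu,u-u_h) = -\varepsilon\dual{\mu}{u_h}_{\partial\cT}$ (using $\dual{\mu}{u}_{\partial\cT}=0$) is controlled by the jump term $\varepsilon\|\jump{u_h}\|_{\partial T}^2$ via Lemma~\ref{lem:traceinfsup} and the characterization of $\|\cdot\|_{\Lambda,\varepsilon}$; indeed, since $\jump{u}=0$, $\varepsilon\dual{\mu}{u_h}_{\partial\cT} = \varepsilon\dual{\mu}{\jump{u_h}}_{\partial\cT}$, bounded by $\|\mu\|_{\Lambda,\varepsilon}$ times the appropriate weighted jump seminorm.

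\textbf{The primal residual.} For the remaining part, write $R(v) := \ip{f}{v}_\Omega - a(u_h,v) - b(\lambda_h,v)$ for $v\in U$. Using the Galerkin orthogonality~\eqref{eq:dglm:disc:a} we may subtract $R(\Pi_U v)=0$ for any $v$, so $R(v)=R(v-\Pi_U v)$. Here I would split $v$ by first subtracting a global conforming quasi-interpolant: write $v = (v - J_{\cT,0}\tilde v) + J_{\cT,0}\tilde v$ where $\tilde v$ is chosen so that $J_{\cT,0}\tilde v$ lies in $H_0^1(\Omega)$ and approximates the ``continuous part'' of $v$; actually it is cleaner to decompose $R$ into contributions and integrate by parts elementwise. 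On each $T$, integration by parts gives
\begin{align*}
  \ip{f}{\phi}_T - \varepsilon^2\ip{\nabla u_h}{\nabla\phi}_T - \ip{u_h}{\phi}_T + \varepsilon\dual{\lambda_h}{\phi}_{\partial T}
  = \ip{f - u_h + \varepsilon^2\Delta u_h}{\phi}_T + \varepsilon\dual{\lambda_h - \varepsilon\nabla u_h\cdot\normal_T}{\phi}_{\partial T}.
\end{align*}
Since $u_h|_T$ is (piecewise) polynomial of degree at most a fixed order plus the bubble corrections, $\Delta u_h$ is only an $L_2$ function, so I would keep the volume residual as $f - u_h + \varepsilon^2\Delta u_h$ but immediately use the orthogonality $\ip{1}{v-\Pi_U v}_T = 0$ to replace $f-u_h$ by $(1-\Pi_T^0)(f-u_h)$ — this is the crucial step that produces the oscillation-type volume term $\|(1-\Pi_T^0)(u_h-f)\|_T$ in $\rho(T)$ and avoids any uncontrolled $h_T^2\varepsilon^2$-weighted Laplacian term. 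For the boundary contribution one sums over elements, regroups into facet jumps, and here convexity of $\Omega$ enters: to bound the normal-flux jump $\jump{\varepsilon\nabla u_h\cdot\normal}$ and the remaining part of $\lambda_h$ one uses an $H^2$-regularity/duality argument (solve $-\varepsilon^2\Delta z + z = $ error, test appropriately) so that the tangential jump term $\varepsilon^2 h_T\|\jump{\nabla u_h\times\normal}\|_{\partial T}^2$ and the jump term $\varepsilon\|\jump{u_h}\|_{\partial T}^2$ together with $\varepsilon^2\|(1-\Pi_T^0)\nabla u_h\|_T^2$ absorb everything; the tangential-jump and gradient-oscillation terms arise because only the conforming part of $u_h$ is ``seen'' by the $H_0^1$ test function after applying $J_{\cT,0}$, and the nonconforming remainder is estimated by jumps in the usual way.

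\textbf{Main obstacle.} I expect the delicate point to be obtaining $\varepsilon$-robustness in the weights: naively, residual estimators for reaction--diffusion carry weights like $\min\{h_T,\varepsilon\}$ on volume residuals and $\min\{h_T^{1/2},\varepsilon^{1/2}\}/\varepsilon^{1/2}$-type factors on jumps, and one must verify that the specific combination in $\rho(T)$ — namely $1\cdot\|(1-\Pi_T^0)(u_h-f)\|_T^2$, $\varepsilon^2\|(1-\Pi_T^0)\nabla u_h\|_T^2$, $\varepsilon\|\jump{u_h}\|_{\partial T}^2$, $\varepsilon^2 h_T\|\jump{\nabla u_h\times\normal}\|_{\partial T}^2$ — emerges with constants independent of $\varepsilon$. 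This forces one to use the $\varepsilon$-weighted trace inequality stated in the auxiliary section and the orthogonality of $\Pi_U$ (to kill the zeroth moment) rather than crude bounds; the interplay of these with the convex-domain duality step, and ensuring the quasi-interpolation estimates~\eqref{eq:propquasiint} are applied with the right power of $\varepsilon$ versus $h_T$, is where the real work lies. The presence of the exponential (or piecewise-polynomial) bubbles in $U_h$ does not hurt here because for reliability one only needs $\Pi_U$ to exist with the bound~\eqref{eq:def:fortin} and the extra orthogonality, both granted by Theorem~\ref{thm:fortin:aug}.
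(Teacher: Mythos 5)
Your high-level skeleton (inf--sup stability to reduce to the residual, the augmented Fortin operator to kill the zeroth moment, a regularity argument where convexity enters) matches the paper, but the two steps you leave as sketches are exactly the places where your route does not close. First, the elementwise integration by parts: it produces the volume residual $f-u_h+\varepsilon^2\Delta u_h$ and the boundary residual $\lambda_h-\varepsilon\nabla u_h\cdot\normal_T$, and neither the Laplacian term nor any trace of $\lambda_h$ appears in $\rho(T)$. The Fortin operator gives only boundedness $\|\Pi_U v\|_{U,\varepsilon}\lesssim\|v\|_{U,\varepsilon}$, not an approximation estimate of the form $\|v-\Pi_Uv\|_T\lesssim h_T\|\nabla v\|_T$, so you cannot absorb $\varepsilon^2\ip{\Delta u_h}{v-\Pi_Uv}_T$ by an inverse estimate, and your remark that the moment orthogonality ``avoids any uncontrolled Laplacian term'' is not substantiated. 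The paper does \emph{not} integrate by parts: it keeps $\varepsilon^2\ip{\pwnabla u_h}{\pwnabla(v-\Pi_Uv)}_\Omega$ and observes that for any constant vector $\bq$ one has $\ip{\bq}{\pwnabla(v-\Pi_Uv)}_T=\dual{\bq\cdot\normal_T}{v-\Pi_Uv}_{\partial T}=0$ by the Fortin property (since $\bq\cdot\normal_T$ is facewise constant, hence admissible in $\Lambda_h$), which directly yields the estimator term $\varepsilon\|(1-\Pi_\cT^0)\pwnabla u_h\|_\Omega$.

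Second, the bound of $\sup_{\mu}b(\mu,u-u_h)/\|\mu\|_{\Lambda,\varepsilon}$. You assert that $\varepsilon\dual{\mu}{\jump{u_h}}_{\partial\cT}$ is ``bounded by $\|\mu\|_{\Lambda,\varepsilon}$ times the appropriate weighted jump seminorm,'' but a general $\mu\in H^{-1/2}(\cT)$ does not pair boundedly with the $L_2$ jump of $u_h$; this is the whole difficulty. The paper resolves it by taking a representative $\btau\in\Hdivset\Omega$ of $\mu$ of minimal norm, performing a Helmholtz decomposition $\btau=\nabla r+\curl q$ with $r\in H_0^1(\Omega)$, $\Delta r=\div\btau$, and invoking elliptic regularity $\|D^2r\|_\Omega\lesssim\|\div\btau\|_\Omega$ --- \emph{this} is where convexity is used, not in a dual problem with the error as datum as you suggest. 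Choosing $\mu_h=\trdiv_\cT(\Pi_\cT^{\div}\nabla r+\curl (J_\cT q))$, the gradient part (whose facet traces lie in $L_2$ because $r\in H^2(\Omega)$) produces the $\varepsilon^{1/2}\|\jump{u_h}\|$ term via the $\varepsilon$-weighted trace inequality, while the curl part, after elementwise integration by parts, produces the tangential jump term $\varepsilon h_T^{1/2}\|\jump{\nabla u_h\times\normal}\|$. Your alternative explanation for the tangential jump (a conforming/nonconforming splitting of $u_h$ via $J_{\cT,0}$) is not carried out and is not the mechanism at work here. As it stands the proposal identifies the right ingredients but omits the two arguments that actually make the terms of $\rho(T)$ appear with $\varepsilon$-uniform constants.
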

\begin{proof}
  Recall that the system~\eqref{eq:dglm} is $\inf$--$\sup$ stable, and therefore, 
  \begin{align*}
    \|u-u_h\|_{U,\varepsilon} + \|\lambda-\lambda_h\|_{\Lambda,\varepsilon}
    \eqsim \sup_{0\neq (v,\mu)\in U\times \Lambda} \frac{a(u-u_h,v) + b(\lambda-\lambda_h,v) + b(\mu,u-u_h)}{\|(v,\mu)\|_{U\times \Lambda}}.
  \end{align*}
  Let $v\in U$ be arbitrary and let $\Pi_U$ denote the operator from Theorem~\ref{thm:fortin:aug}.
  Using~\eqref{eq:dglm:a}, \eqref{eq:dglm:disc:a}, $b(\lambda_h,v-\Pi_U v)=0$, and $\ip{1}{v-\Pi_Uv}_T = 0$ for all $T\in\cT$,  we obtain
  \begin{align*}
    a(u-u_h,v) + b(\lambda-\lambda_h,v) &= a(u-u_h,v-\Pi_U v) + b(\lambda-\lambda_h,v-\Pi_Uv)\\
    &= \ip{f-u_h}{v-\Pi_Uv}_\Omega -\varepsilon^2\ip{\pwnabla u_h}{\pwnabla(v-\Pi_Uv)}_\Omega
    \\
    &\lesssim \|(1-\Pi_\cT^0)(f-u_h)\|_\Omega\|v\|_{U,\varepsilon} + |\varepsilon^2\ip{\pwnabla u_h}{\pwnabla(v-\Pi_Uv)}_\Omega|.
  \end{align*}
  Let $\bq\in P^0(T)^d$. From the properties of the Fortin operator it follows that
  \begin{align*}
    \ip{\bq}{\pwnabla(v-\Pi_U v)}_T = \dual{\bq\cdot\normal_T}{v-\Pi_U v}_{\partial T} = 0.
  \end{align*}
  Consequently, 
  \begin{align*}
    |\varepsilon^2\ip{\pwnabla u_h}{\pwnabla(v-\Pi_Uv)}_\Omega| &= |\varepsilon^2\ip{(1-\Pi_\cT^0)\pwnabla u_h}{\pwnabla(v-\Pi_Uv)}_\Omega|
    \\
    &\lesssim \varepsilon\|(1-\Pi_\cT^0)\pwnabla u_h\|_\Omega \|v\|_{U,\varepsilon}.
  \end{align*}
  Putting all estimates obtained so far together we see that
  \begin{align*}
    \|u-u_h\|_{U,\varepsilon} + \|\lambda-\lambda_h\|_{\Lambda,\varepsilon} 
    &\lesssim \|(1-\Pi_\cT^0)(f-u_h)\|_\Omega + \varepsilon\|(1-\Pi_\cT^0)\pwnabla u_h\|_\Omega 
    \\
    &\qquad + \sup_{0\neq \mu\in \Lambda} \frac{b(\mu,u-u_h)}{\|\mu\|_{\Lambda,\varepsilon}}.
  \end{align*}
  It remains to localize the last term on the right-hand side. Since $u\in H_0^1(\Omega)$ we have that $b(\mu,u)=0$ for all $\mu\in \Lambda$ by Lemma~\ref{lem:cont}.
  This and~\eqref{eq:dglm:disc:b} give $b(\mu,u-u_h) = \varepsilon\dual{\mu-\mu_h}{u_h}_{\partial\cT}$ for all $\mu\in \Lambda$, $\mu_h\in\Lambda_h$.
  Let $\mu\in \Lambda$ be given. 
  There exists $\btau\in \Hdivset\Omega$ with $\trdiv_\cT\btau = \mu$ and $\|\btau\|_{\div,\varepsilon} = \|\mu\|_{\Lambda,\varepsilon}$. We consider the Helmholtz decomposition
  \begin{align*}
    \btau = \nabla r + \curl q
  \end{align*}
  with $r\in H_0^1(\Omega)$ and $\Delta r = \div\btau$. Note that by elliptic regularity we get $r\in H^2(\Omega)$ with $\|D^2 r\|_\Omega \lesssim \|\div\btau\|_\Omega$.
  Let $\Pi_\cT^{\div}\colon \Hdivset\Omega \to \RT^0(\cT)$ denote the Raviart--Thomas interpolator.
  Set $\btau_h = \Pi_\cT^\div\nabla r + \curl(J_\cT q)$. Note that $\curl(J_\cT q)\in \RT^0(\cT)$. Therefore, $\mu_h = \trdiv_\cT\btau_h \in \Lambda_h$ and
  \begin{align}\label{eq:apost:dglm:rel:1}
    \varepsilon \dual{\mu-\mu_h}{u_h}_{\partial\cT} = \varepsilon\dual{\trdiv_\cT(\nabla r-\Pi_\cT^{\div}\nabla r)}{u_h}_{\partial\cT} + \varepsilon\dual{\trdiv_\cT(\curl(1-J_\cT)q)}{u_h}_{\partial\cT}.
  \end{align}
  To estimate the first term on the right-hand side we note that $\nabla r|_{\partial T} \in L^2(\partial T)$ and $\Pi^\div\nabla r\cdot\normal|_F = \Pi_F^0\nabla r\cdot\normal|_F$ for facets $F$. It follows that
  \begin{align*}
    \varepsilon\dual{\trdiv_\cT(\nabla r-\Pi_\cT^{\div}\nabla r)}{u_h}_{\partial\cT} &= \sum_{F\in\cF} \varepsilon\dual{(1-\Pi_F^0)\nabla r\cdot\normal}{[u_h]}_{F}
    \\
    &\leq \sum_{F\in\cF} \varepsilon^{1/2} \|(1-\Pi_F^0)\nabla r\cdot\normal\|_F  \varepsilon^{1/2}\|[u_h]\|_F.
  \end{align*}
  Using the trace inequality we further obtain for any $T\in\cT$ with $F\in\cF_T$ that
  \begin{align*}
    \varepsilon^{1/2} \|(1-\Pi_F^0)\nabla r\cdot\normal\|_F \lesssim \|\nabla r\|_T^{1/2} \varepsilon^{1/2}\|D^2 r\|_T^{1/2} \lesssim \|\nabla r\|_T + \varepsilon\|D^2 r\|_T 
  \end{align*}
  Summing over all $F\in\cF$ yields
  \begin{align*}
    \varepsilon^{1/2} \|(1-\Pi_F^0)\nabla r\cdot\normal\|_F &\lesssim (\|\nabla r\|_\Omega + \varepsilon\|D^2 r\|_\Omega)\varepsilon^{1/2}\|[u_h]\|_{\cF} \\
    &\lesssim (\|\btau\|_\Omega + \varepsilon\|\div\btau\|_\Omega)\varepsilon^{1/2}\|[u_h]\|_{\cF},
  \end{align*}
  where $\|[u_h]\|_{\cF}^2 := \sum_{F\in\cF} \|[u_h]\|_F^2 \eqsim  \sum_{T\in\cT} \|[u_h]\|_{\partial T}^2 =: \|[u_h]\|_{\partial\cT}^2$.

For the second term on the right-hand side of~\eqref{eq:apost:dglm:rel:1} we use elementwise integration by parts to see that
\begin{align*}
  \varepsilon\dual{\trdiv_\cT(\curl(1-J_\cT)q)}{u_h}_{\partial\cT} &= \sum_{T\in\cT} \varepsilon\ip{\curl(1-J_\cT)q}{\nabla u_h}_T + \varepsilon\ip{\div\curl(1-J_\cT)q}{u_h}_T 
 \\
  &= \sum_{T\in\cT} \varepsilon\dual{(1-J_\cT)q}{\nabla u_h\times\normal}_{\partial T} = \sum_{F\in\cF} \varepsilon\dual{(1-J_\cT)q}{[\nabla u_h\times\normal]}_F.
\end{align*}
Using that $\|(1-J_\cT)q\|_F\lesssim h_T^{1/2}\|\nabla q\|_{\Omega_T}$ for all $T$ with $F\in\cF_T$ we further get that
\begin{align*}
  \varepsilon\dual{\trdiv_\cT(\curl(1-J_\cT)q)}{u_h}_{\partial\cT} &\lesssim \sum_{F\in\cF} \varepsilon\dual{(1-J_\cT)q}{[\nabla u_h\times\normal]}_F
  \lesssim \|\nabla q\|_\Omega\varepsilon\|h_\cT^{1/2}[\nabla u_h\times\normal]\|_{\partial\cT} \\
  &\lesssim \|\btau\|_{\div,\varepsilon} \varepsilon\|h_\cT^{1/2}[\nabla u_h\times\normal]\|_{\partial\cT}.
\end{align*}
We thus have shown that
\begin{align*}
  b(\mu-\mu_h,u_h) &\lesssim (\varepsilon^{1/2}\|[u_h]\|_{\partial\cT} + \varepsilon\|h_\cT^{1/2}[\nabla u_h\times\normal]\|_{\partial\cT})\|\btau\|_{\div,\varepsilon}
  \\
  &= (\varepsilon^{1/2}\|[u_h]\|_{\partial\cT} + \varepsilon\|h_\cT^{1/2}[\nabla u_h\times\normal]\|_{\partial\cT})\|\mu\|_{\Lambda,\varepsilon}
\end{align*}
for all $\mu\in \Lambda$ which finishes the proof. 
\end{proof}

An efficiency-type estimate follows from the next technical result. 
\begin{lemma}
  Let $(u,\lambda)$ and $(u_h,\lambda_h)$ denote the unique solutions of~\eqref{eq:dglm} and~\eqref{eq:dglm:disc}, respectively. 
  Then, for any $T\in\cT$ we have 
  \begin{align*}
    \varepsilon^{1/2}\|[u_h]\|_{\partial T} + \varepsilon h_T^{1/2} \|[\nabla u_h\times\normal]\|_{\partial T} \lesssim \|u-u_h\|_{\Omega_T} + \varepsilon\|\pwnabla(u-u_h)\|_{\Omega_T}.
  \end{align*}
\end{lemma}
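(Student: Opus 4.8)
The strategy is the standard bubble-function argument of Verfürth~\cite{Verf94} adapted to the $\varepsilon$-weighted norms, with the modified face bubbles $\eta_{F,\varepsilon}^T$ replacing the polynomial ones where appropriate. There are two jump terms to bound: the jump $[u_h]$ of the (possibly discontinuous) primal approximation across facets, and the tangential jump $[\nabla u_h\times\normal]$. For each we localize to a facet $F\in\cF_T$, test the relevant residual against a bubble supported in $\Omega_F$, and undo the integration by parts that produced the jump.

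\emph{Step 1 (tangential jump).} For a facet $F$ with patch $\Omega_F$, let $\phi_F$ be a face bubble localized to $F$ (here the \emph{polynomial} bubble $\eta_F^T$ extended by zero suffices, since the tangential-jump term already carries the favourable weight $\varepsilon h_T^{1/2}$). Using the equivalence of norms on the finite-dimensional space of polynomials of bounded degree on $F$, $\|[\nabla u_h\times\normal]\|_F^2 \lesssim \dual{[\nabla u_h\times\normal]}{\phi_F}_F$ for a suitable scaling of $\phi_F$. Extend $\phi_F$ into $\Omega_F$ and integrate by parts elementwise: since $\curl$ applied to the lifted bubble is the object dual to the tangential jump, one obtains $\dual{[\nabla u_h\times\normal]}{\phi_F}_F = \sum_{T\supset F}\bigl(\ip{\curl\phi_F}{\nabla u_h}_T + \ip{\phi_F}{\curl\nabla u_h}_T\bigr)$, and $\curl\nabla u_h = 0$ elementwise. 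Hence the term reduces to $\ip{\curl\phi_F}{\pwnabla(u-u_h)}_{\Omega_F}$ after inserting $\curl\nabla u = \curl\nabla u$ for the exact $H_0^1$ solution $u$ (whose tangential jumps vanish), and is bounded by $\|\curl\phi_F\|_{\Omega_F}\|\pwnabla(u-u_h)\|_{\Omega_F}$. Standard inverse/scaling estimates $\|\curl\phi_F\|_{\Omega_F}\lesssim h_F^{-1}\|\phi_F\|_{\Omega_F}$ together with $\|\phi_F\|_{\Omega_F}\lesssim h_F^{1/2}\|[\nabla u_h\times\normal]\|_F$ close this term, giving $\varepsilon h_T^{1/2}\|[\nabla u_h\times\normal]\|_{\partial T}\lesssim \varepsilon\|\pwnabla(u-u_h)\|_{\Omega_T}$.

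\emph{Step 2 (jump of $u_h$).} For the term $\varepsilon^{1/2}\|[u_h]\|_{\partial T}$, proceed similarly but now use the \emph{modified} bubble $\eta_{F,\varepsilon}^T$: choose a bubble $\psi_F$ whose trace on $F$ equals $\Pi_F^0[u_h]$ (or a polynomial multiple of $[u_h]$ of bounded degree) and which decays exponentially into the interior, so that by Lemma~\ref{lem:propEtaFe} (resp. Lemma~\ref{ref:lem:propEtaFe:alt}) one has $\|\psi_F\|_{\Omega_F}\lesssim |\Omega_F|^{1/2}(\varepsilon/h_T)^{1/2}\|[u_h]\|_{\infty,F}$ and $\|\nabla\psi_F\|_{\Omega_F}\lesssim h_T^{-1}(h_T/\varepsilon)^{1/2}|\Omega_F|^{1/2}\|[u_h]\|_{\infty,F}$, while $\psi_F|_{\partial T}$ reproduces $[u_h]$ on $F$ and vanishes on the other facets. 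By norm equivalence on the facet and the scaling $|\Omega_F|\eqsim h_T^d$, $\|[u_h]\|_F^2\lesssim \dual{[u_h]}{\psi_F}_F$. Now $[u_h] = [u_h - u]$ since $u\in H_0^1(\Omega)$; undo the jump by recognizing $\dual{[u_h-u]}{\psi_F}_F$ as arising from the trace pairing $\dual{\trnabla_\cT(u-u_h)}{\cdot}$ — more concretely, lift $\psi_F$ and write $\dual{[u_h]}{\psi_F}_F = \sum_{T\supset F}\bigl(\ip{u_h-u}{\div(\psi_F\normal_T)}_T + \ip{\nabla(u_h-u)}{\psi_F\normal_T}_T\bigr)$, valid because the corresponding interior contributions from neighbouring elements cancel and the boundary-of-patch contributions vanish by the support of $\psi_F$. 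Bounding gives $\|[u_h]\|_F^2\lesssim \|u-u_h\|_{\Omega_F}\,\|\div(\psi_F\normal_T)\|_{\Omega_F} + \|\nabla(u-u_h)\|_{\Omega_F}\,\|\psi_F\|_{\Omega_F}$; inserting the bubble estimates and multiplying by $\varepsilon$, the first product contributes $\lesssim \|u-u_h\|_{\Omega_T}\cdot(h_T/\varepsilon)^{1/2}$ and the second $\lesssim \varepsilon\|\nabla(u-u_h)\|_{\Omega_T}\cdot(\varepsilon/h_T)^{1/2}$, so after dividing by $\|[u_h]\|_F$ and rescaling one arrives at $\varepsilon^{1/2}\|[u_h]\|_{\partial T}\lesssim \|u-u_h\|_{\Omega_T} + \varepsilon\|\pwnabla(u-u_h)\|_{\Omega_T}$. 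Summing the two steps over $F\in\cF_T$ and using shape-regularity (finite overlap of patches) yields the claim.

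\emph{Main obstacle.} The delicate point is the $u_h$-jump term: a naive polynomial face bubble would produce the factor $h_T^{1/2}$ (from the inverse trace inequality) and hence a bound $\varepsilon^{1/2} h_T^{1/2}\|[u_h]\|_{\partial T}\lesssim h_T^{1/2}\|u-u_h\|+\varepsilon h_T^{1/2}\|\nabla(u-u_h)\|$, which is \emph{not} $\varepsilon$-robust when $\varepsilon\ll h_T$ because the left-hand side we want carries only $\varepsilon^{1/2}$, not $\varepsilon^{1/2}h_T^{1/2}$. The resolution — and the reason the modified bubbles are essential here — is exactly that Lemma~\ref{lem:propEtaFe} trades the bad $h_T$-power for an $\varepsilon/h_T$-power in the $L_2$-norm of the bubble and a compensating $h_T/\varepsilon$-power in the gradient, so that the weighted products above come out dimensionally balanced uniformly in $\varepsilon/h_T$. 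Making the bookkeeping of these fractional powers of $\varepsilon/h_T$ precise, and checking that the lifting of $\psi_F$ can be chosen inside the discrete space (so that the cancellations across the patch genuinely hold), is where the care is needed; everything else is routine scaling.
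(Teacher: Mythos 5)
Your proof is correct, but for the decisive term $\varepsilon^{1/2}\|[u_h]\|_{\partial T}$ you take a genuinely different route from the paper. The paper's argument is much shorter and uses no bubble functions at all for that term: since $\Lambda_h=P^0(\partial\cT)$, the discrete constraint \eqref{eq:dglm:disc:b} forces $\Pi_F^0[u_h]=0$ on every facet, so $[u_h]=(1-\Pi_F^0)[u_h]=(1-\Pi_F^0)[u_h-u]$ (using $[u]=0$), and the multiplicative trace inequality $\|(1-\Pi_F^0)v\|_F\lesssim\|v\|_T^{1/2}\|\nabla v\|_T^{1/2}$ applied elementwise, followed by Young's inequality with weight $\varepsilon^{1/2}$, gives $\varepsilon^{1/2}\|[u_h]\|_F\lesssim\|u-u_h\|_{T^\pm}+\varepsilon\|\nabla(u-u_h)\|_{T^\pm}$ in two lines; it is exactly the facet orthogonality that removes the $h_T^{-1/2}$ from the trace inequality and makes the bound $\varepsilon$-robust. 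You instead run a full duality/lifting argument with the modified bubbles $\eta_{F,\varepsilon}^T$, and your bookkeeping is right: with $\|[u_h]\|_{\infty,F}\lesssim|F|^{-1/2}\|[u_h]\|_F$ and Lemma~\ref{lem:propEtaFe} one gets $\|\psi_F\|_{\Omega_F}\lesssim\varepsilon^{1/2}\|[u_h]\|_F$ and $\|\nabla\psi_F\|_{\Omega_F}\lesssim\varepsilon^{-1/2}\|[u_h]\|_F$, which balances exactly as you claim (this is, in effect, the $H^1$-side analogue of the paper's Lemma~\ref{lem:effTraceSigma}, which does use the modified bubbles for the DHFEM estimator). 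Your "main obstacle" diagnosis is accurate for your own approach. What each buys: the paper's proof is elementary but hinges on $P^0(\partial\cT)\subseteq\Lambda_h$; yours never invokes \eqref{eq:dglm:disc:b} and so would survive without that orthogonality, at the price of the bubble machinery and of checking the lifting carefully --- in particular, to produce the \emph{jump} rather than the sum of the two traces you must lift with a fixed facet normal $\normal_F$ (your $\div(\psi_F\normal_T)$ with the element-dependent normal is a sign slip, easily repaired). For the tangential-jump term both arguments coincide with the standard Verf\"urth technique \cite{Verf94}, which the paper simply cites.
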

\begin{proof}
  Recall that $0=b(\mu_h,u_h) = -\sum_{F\in\cF} \varepsilon\dual{\mu_h}{[u_h]}_F$ for all $\mu_h\in\Lambda_h$. This means that $[u_h]$ is orthogonal to constants on  each facet $F$. For each $F\in\cF$ there exist at most two elements, $T^\pm\in\cT$ which share the edge $F$. Then, using continuity $[u] = 0$,
  \begin{align*}
    \varepsilon^{1/2}\|[u_h]\|_F &= \varepsilon^{1/2}\|(1-\Pi_F^0)[u_h]\|_F 
    \\
    &\leq \varepsilon^{1/2} \|(1-\Pi_F^0)(u-u_h)|_{T^+}\|_F + \varepsilon^{1/2} \|(1-\Pi_F^0)(u-u_h)|_{T^-}\|_F.
  \end{align*}
  We apply the trace inequality to obtain that 
  \begin{align*}
    \varepsilon^{1/2} \|(1-\Pi_F^0)(u-u_h)|_{T^\pm}\|_F &\lesssim \|u-u_h\|_{T^\pm}^{1/2} \varepsilon^{1/2}\|\nabla(u-u_h)\|_{T^\pm}^{1/2} \\
    &\lesssim \|u-u_h\|_{T^\pm} + \varepsilon\|\nabla(u-u_h)\|_{T^\pm}.
  \end{align*}
  Therefore, 
  \begin{align*}
    \varepsilon^{1/2}\|[u_h]\|_{\partial T} \lesssim \|u-u_h\|_{\Omega_T} + \varepsilon\|\pwnabla(u-u_h)\|_{\Omega_T}.
  \end{align*}
  
  It remains to estimate term $\varepsilon h_T^{1/2} \|[\nabla u_h\times\normal]\|_{\partial T}$. For this we can use the usual bubble function technique by Verf\"urth, see, e.g.~\cite{Verf94}, which yields
  \begin{align*}
    \varepsilon h_T^{1/2} \|[\nabla u_h\times\normal]\|_{\partial T} \lesssim \varepsilon\|\pwnabla(u-u_h)\|_{\Omega_T}.
  \end{align*}
  This concludes the proof. 
\end{proof}

The next result shows an efficiency-type estimate, i.e., efficiency up to local best approximation and data oscillation terms. Its proof follows directly from the last result and the triangle inequality. 
\begin{corollary}
  Let $(u,\lambda)$ and $(u_h,\lambda_h)$ denote the unique solutions of~\eqref{eq:dglm} and~\eqref{eq:dglm:disc}, respectively. Then, for any $T\in\cT$ the inequality
  \begin{align*}
    \rho(T)^2 &\lesssim \|u-u_h\|_{\Omega_T}^2 + \varepsilon^2\|\pwnabla(u-u_h)\|_{\Omega_T}^2 
    \\
    &\qquad + \min_{v_h\in P^0(T)} \|u-v_h\|_{T}^2 + \min_{\btau_h\in P^0(T)^d}\varepsilon^2\|\pwnabla u-\btau_h\|_{T}^2
    + \|(1-\Pi_T^0)f\|_T^2
  \end{align*}
  holds.\qed
\end{corollary}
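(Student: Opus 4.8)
The plan is to obtain everything from the preceding lemma together with elementary stability properties of the $L_2$-orthogonal projection $\Pi_T^0$ and the triangle inequality, inserting the exact solution $u$ as an intermediate term. Recall that
\begin{align*}
  \rho(T)^2 &= \|(1-\Pi_T^0)(u_h-f)\|_T^2 + \varepsilon^2\|(1-\Pi_T^0)\nabla u_h\|_T^2 \\
  &\qquad + \varepsilon\|\jump{u_h}\|_{\partial T}^2 + \varepsilon^2 h_T\|\jump{\nabla u_h\times\normal}\|_{\partial T}^2 .
\end{align*}
The last two (facet) terms are, after squaring, precisely the square of the left-hand side in the previous lemma, hence bounded by $\|u-u_h\|_{\Omega_T}^2 + \varepsilon^2\|\pwnabla(u-u_h)\|_{\Omega_T}^2$. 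So only the two volume terms remain to be treated.

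For the first volume term I would use linearity of $1-\Pi_T^0$ to split $u_h-f = (u_h-u) + u - f$ and apply the triangle inequality,
\begin{align*}
  \|(1-\Pi_T^0)(u_h-f)\|_T \leq \|(1-\Pi_T^0)(u_h-u)\|_T + \|(1-\Pi_T^0)u\|_T + \|(1-\Pi_T^0)f\|_T .
\end{align*}
Since $\Pi_T^0$ is an $L_2$-orthogonal projection one has $\|(1-\Pi_T^0)(u_h-u)\|_T \leq \|u_h-u\|_T \leq \|u-u_h\|_{\Omega_T}$ and $\|(1-\Pi_T^0)u\|_T = \min_{v_h\in P^0(T)}\|u-v_h\|_T$. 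For the second volume term, using that $\pwnabla u|_T = \nabla u|_T$ because $u\in H_0^1(\Omega)$, the same argument gives
\begin{align*}
  \varepsilon\|(1-\Pi_T^0)\nabla u_h\|_T \leq \varepsilon\|\pwnabla(u-u_h)\|_{\Omega_T} + \varepsilon\min_{\btau_h\in P^0(T)^d}\|\pwnabla u-\btau_h\|_T .
\end{align*}
Squaring the three displayed bounds, adding them up together with the facet contribution, and absorbing constants into $\lesssim$ yields the asserted estimate.

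I do not expect a genuine obstacle here: the statement is essentially bookkeeping on top of the previous lemma, whose proof carried the real work (the Verf\"urth bubble-function argument for the tangential-jump term and the orthogonality-to-constants structure coming from~\eqref{eq:dglm:disc:b} for the face-jump term). The only conceptual point worth noting is the standard one that the oscillation of the discrete solution, $\|(1-\Pi_T^0)u_h\|_T$ and $\varepsilon\|(1-\Pi_T^0)\nabla u_h\|_T$, is controlled by the oscillation of the exact solution plus the local energy error $\|u-u_h\|_{\Omega_T} + \varepsilon\|\pwnabla(u-u_h)\|_{\Omega_T}$ — exactly what inserting $u$ and using the triangle inequality produces — so that $\rho(T)$ is efficient only up to the unavoidable best-approximation and data-oscillation terms.
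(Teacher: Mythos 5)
Your proof is correct and follows exactly the route the paper indicates: the paper itself only remarks that the corollary ``follows directly from the last result and the triangle inequality,'' and your argument fills in precisely that bookkeeping --- the facet terms via the preceding lemma, the volume terms via inserting $u$, the stability of $1-\Pi_T^0$, and the identification $\|(1-\Pi_T^0)u\|_T=\min_{v_h\in P^0(T)}\|u-v_h\|_T$. No gaps.
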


\subsection{Error estimator for the DHFEM}\label{sec:apost:dualdglm}
For given $T\in\cT$ fix some vertex $z_T\in \cV_T$ and let $\cE_{T,z_T}$ denote the set of all $d$ distinct edges that share vertex $z_T$. 
To each of these edges we associate the tangential vector $\tangential_E^T\in\R^d$, $E\in \cE_{T,z_T}$ with unit length. Note that $\set{\tangential_E^T}{E\in\cE_{T,z_T}}$ spans $\R^d$.
Furthermore, let $\eta_E^T = \prod_{z\in\cV_E} \eta_z^T$ denote the edge bubble function.

Throughout this section we consider for each $T\in\cT$, 
\begin{align*}
  \Sigma_{h,\varepsilon}(T) = \begin{cases}
    \RT^0(T) + \linhull\set{\eeta_{F,\varepsilon}^T}{F\in\cF_T} + \linhull\set{\eta_E^T\tangential^T_E}{E\in\cE_{T,z_T}} & \text{if } \varepsilon<h_T, \\
    \RT^0(T) + \linhull\set{\eeta_{F}^T}{F\in\cF_T} + \linhull\set{\eta_E^T\tangential^T_E}{E\in\cE_{T,z_T}} & \text{if } \varepsilon\geq h_T.
  \end{cases}
\end{align*}
Define the product space
\begin{align*}
  \Sigma_h=\Sigma_{h,\varepsilon} = \prod_{T\in\cT} \Sigma_{h,\varepsilon}(T).
\end{align*}
We also consider space $W_h$ for $p=0$, i.e., $W_h = \trnabla_\cT(P^1(\cT)\cap H_0^1(\Omega))$. 

The following result follows from the analysis given in~\cite[Section~4]{FuehrerHeuerFortin}.

\begin{theorem}\label{thm:fortin:dual:aug}
  There exists $\Pi_\Sigma\colon \Sigma\to \Sigma_{h}$ satisfying~\eqref{eq:def:fortin} with constant $C_\Sigma$ independent of $\varepsilon$ and $h$, and satisfying the additional property
  \begin{align*}
    \ip{\bq}{\btau-\Pi_\Sigma \btau}_\Omega = 0 \quad\forall \bq\in P^0(\cT)^d, \,\btau\in \Sigma.
  \end{align*}
  \qed
\end{theorem}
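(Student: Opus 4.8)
The plan is to construct $\Pi_\Sigma$ elementwise --- which is legitimate since $\Sigma_h=\prod_{T\in\cT}\Sigma_{h,\varepsilon}(T)$ --- by starting from the $\varepsilon$-robust Fortin operator already furnished by the analysis of~\cite[Section~4]{FuehrerHeuerFortin} for the (sub)space $\RT^0(T)+\linhull\set{\eeta_{F,\varepsilon}^T}{F\in\cF_T}$ (resp.\ its purely polynomial analogue when $\varepsilon\geq h_T$), and then correcting it with the tangential edge bubbles $\eta_E^T\tangential^T_E$, $E\in\cE_{T,z_T}$, whose sole purpose is to enforce the additional $L_2$-orthogonality. The structural observation that makes these two steps independent is that the tangential bubbles carry no normal trace: since $\eta_E^T$ vanishes on $\partial T\setminus E$ and $\tangential^T_E\cdot\normal_T\equiv0$ along $E$, we get $(\eta_E^T\tangential^T_E)\cdot\normal_T\equiv0$ on $\partial T$, so adding multiples of them changes neither the boundary pairings that enter $b^\mathrm{dual}$ nor the mean values $\int_{\partial T}(\cdot)\cdot\normal_T$.

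Concretely, I would set $\widehat\Pi_\Sigma\btau|_T=\widehat\Pi_T(\btau|_T)$, where $\widehat\Pi_T$ is the operator obtained from the proofs of~\cite[Lemma~4.1 and Lemma~4.5]{FuehrerHeuerFortin} (with the Raviart--Thomas basis in place of the $\eeta_{\partial T,j}$, as noted before Theorem~\ref{thm:fortin:dual}); by the scaling bounds of Lemma~\ref{lem:propEtaFe} it is bounded in $\|\cdot\|_{\div,\varepsilon,T}$ uniformly in $\varepsilon$ and $h_T$, it reproduces $P^0(T)^d\subset\RT^0(T)$, and $b^\mathrm{dual}(w_h,\btau-\widehat\Pi_\Sigma\btau)=0$ for all $w_h\in W_h$. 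Because $\{\tangential^T_E\}_{E\in\cE_{T,z_T}}$ spans $\R^d$ and $\ip{\eta_E^T}{1}_T\eqsim|T|$, there is a unique $\btau^{\mathrm{corr}}_T\in\linhull\set{\eta_E^T\tangential^T_E}{E\in\cE_{T,z_T}}$ with $\int_T\btau^{\mathrm{corr}}_T=\int_T(\btau-\widehat\Pi_\Sigma\btau)$; I put $\Pi_\Sigma\btau|_T=\widehat\Pi_\Sigma\btau|_T+\btau^{\mathrm{corr}}_T$. By the normal-trace-free property the correction contributes nothing to $b^\mathrm{dual}(w_h,\cdot)$, so $\Pi_\Sigma$ still satisfies~\eqref{eq:def:fortin:dual}; by construction $\int_T(\btau-\Pi_\Sigma\btau)=0$ for every $T$, i.e.\ $\ip{\bq}{\btau-\Pi_\Sigma\btau}_\Omega=0$ for all $\bq\in P^0(\cT)^d$. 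Together with the stability bound discussed next this proves the theorem, and Theorem~\ref{thm:discrete:dual} then applies.

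The main obstacle is the uniform stability estimate $\|\Pi_\Sigma\btau\|_{\Sigma,\varepsilon}\lesssim\|\btau\|_{\Sigma,\varepsilon}$, and specifically the bound for the correction $\btau^{\mathrm{corr}}_T$ \emph{uniformly in the ratio $h_T/\varepsilon$}; the uniform bound for $\widehat\Pi_\Sigma$ is inherited from~\cite{FuehrerHeuerFortin}. The coefficients of $\btau^{\mathrm{corr}}_T$ are, up to shape-regularity constants, $|T|^{-1}\int_T(\btau-\widehat\Pi_\Sigma\btau)$, and since $\widehat\Pi_\Sigma$ reproduces $P^0(T)^d$ this residual equals $-\int_T\widehat\Pi_\Sigma(\btau-\Pi_T^0\btau)$ (with $\Pi_T^0$ the $L_2$-projection onto $P^0(T)^d$), so only the part of $\btau$ not already captured by $\RT^0(T)$ enters. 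In the regime $\varepsilon\lesssim h_T$ --- the one of interest --- one has $\|\eta_E^T\tangential^T_E\|_{\div,\varepsilon,T}\eqsim|T|^{1/2}$ and the crude bound $\bigl|\int_T(\btau-\widehat\Pi_\Sigma\btau)\bigr|\lesssim|T|^{1/2}\|\btau\|_{\div,\varepsilon,T}$ already gives $\|\btau^{\mathrm{corr}}_T\|_{\div,\varepsilon,T}\lesssim\|\btau\|_{\div,\varepsilon,T}$; the opposite regime $\varepsilon\geq h_T$, where the purely polynomial local spaces are used and $\|\eta_E^T\tangential^T_E\|_{\div,\varepsilon,T}$ grows like $(\varepsilon/h_T)|T|^{1/2}$, is the delicate one, and there one combines the $\|\cdot\|_{\div,\varepsilon,T}$-stability of $\widehat\Pi_\Sigma$ with the $\varepsilon$-explicit approximation estimates from~\cite[Section~4]{FuehrerHeuerFortin}. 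Carrying out this scaling bookkeeping --- making sure that forcing the constant vector moment to be reproduced never reintroduces a power of $h_T/\varepsilon$ --- is the only nontrivial point; everything else reduces to an application of~\cite{FuehrerHeuerFortin} and the inversion of the $d\times d$ matrix $[\,\tangential^T_E\mid E\in\cE_{T,z_T}\,]$.
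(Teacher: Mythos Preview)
Your proposal is correct and in fact considerably more detailed than the paper's own treatment, which consists solely of the sentence ``The following result follows from the analysis given in~\cite[Section~4]{FuehrerHeuerFortin}'' followed by a \qed. Your structural observations --- that the tangential edge bubbles $\eta_E^T\tangential_E^T$ have vanishing normal trace on $\partial T$ (in $d=3$: $\eta_E^T$ vanishes on the two faces not containing $E$, and on the two faces containing $E$ the vector $\tangential_E^T$ lies in the face plane), that $\{\tangential_E^T\}_{E\in\cE_{T,z_T}}$ spans $\R^d$, and that the correction therefore preserves the Fortin property~\eqref{eq:def:fortin:dual} while enforcing the mean --- are exactly the mechanism behind the cited construction.

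One remark on the point you yourself flag as delicate. In the regime $\varepsilon\geq h_T$ your crude bound $\bigl|\int_T(\btau-\widehat\Pi_\Sigma\btau)\bigr|\lesssim|T|^{1/2}\|\btau\|_{\div,\varepsilon,T}$ combined with $\|\eta_E^T\tangential_E^T\|_{\div,\varepsilon,T}\eqsim(\varepsilon/h_T)|T|^{1/2}$ would indeed produce a spurious factor $\varepsilon/h_T$; the resolution (implicit in the cited analysis) is that once $\widehat\Pi_\Sigma$ is built starting from $\Pi_T^0\btau$, the residual integral $\int_T(\btau-\widehat\Pi_\Sigma\btau)$ equals minus the integral of the face-bubble part alone, and the coefficients of that part are controlled by face moments of $\btau-\Pi_T^0\btau$, which carry the missing factor $h_T/\varepsilon$ through the trace and approximation estimates. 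You correctly identify this as ``scaling bookkeeping'' deferred to~\cite{FuehrerHeuerFortin}, which is precisely what the paper does as well.
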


For the remainder of this section let $(\bsigma_h,w_h)\in \Sigma_{h}\times W_h$ denote the unique solution of~\eqref{eq:dualdglm:disc}.
Define for each $T\in\cT$ the (squared) indicator
\begin{align*}
  \xi(T)^2 &= \|(1-\Pi_T^0)(\varepsilon\div\bsigma_h-f)\|_T^2 + \|(1-\Pi_T^0)\bsigma_h\|_T^2 + \min\{\varepsilon,h\}\|\jump{\bsigma_h\cdot\normal}\|_{\partial T\setminus\partial\Omega}^2.
\end{align*}
Note that $\xi(T)$ does not involve solution component $w_h$. 

In the next results we analyze reliability and efficiency of the estimator 
\begin{align*}
  \xi^2 = \sum_{T\in\cT} \xi(T)^2.
\end{align*}

\begin{theorem}\label{thm:dualdglm:apost}
  Let $(\bsigma,w)\in \Sigma\times W$ and $(\bsigma_h,w_h)\in \Sigma_h\times W_h$ denote the solutions of~\eqref{eq:dualdglm} and~\eqref{eq:dualdglm:disc}, respectively. 
  Estimator $\xi$ is reliable, i.e., there exists a constant $C_\mathrm{rel}$ independent of $\varepsilon$, $h$, $(\bsigma_h,w_h)$ such that
  \begin{align*}
    \|\bsigma-\bsigma_h\|_{\Sigma,\varepsilon} + \|w-w_h\|_{W,\varepsilon} \leq C_\mathrm{rel} \xi.
  \end{align*}
\end{theorem}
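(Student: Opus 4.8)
The plan is to mimic the reliability proof for the PHFEM (Theorem~\ref{thm:dglm:apost}) but now working with the dual mixed system. First I would invoke $\inf$--$\sup$ stability of~\eqref{eq:dualdglm} to write
\begin{align*}
  \|\bsigma-\bsigma_h\|_{\Sigma,\varepsilon} + \|w-w_h\|_{W,\varepsilon}
  \eqsim \sup_{0\neq(\btau,v)\in\Sigma\times W}
  \frac{a^\mathrm{dual}(\bsigma-\bsigma_h,\btau)+b^\mathrm{dual}(w-w_h,\btau)+b^\mathrm{dual}(v,\bsigma-\bsigma_h)}{\|(\btau,v)\|_{\Sigma\times W}},
\end{align*}
and then bound the two pieces of the numerator separately. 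For the part tested with $\btau\in\Sigma$, I would let $\Pi_\Sigma$ be the augmented Fortin operator from Theorem~\ref{thm:fortin:dual:aug}, use $b^\mathrm{dual}(w_h,\btau-\Pi_\Sigma\btau)=0$ together with~\eqref{eq:dualdglm:a}, \eqref{eq:dualdglm:disc:a}, and replace $f$ by $u_h^\mathrm{dual}=\varepsilon\pwdiv\bsigma_h+f$ where convenient. After subtracting the discrete equation this should collapse to
\begin{align*}
  a^\mathrm{dual}(\bsigma-\bsigma_h,\btau)+b^\mathrm{dual}(w-w_h,\btau)
  = \ip{-(\varepsilon\pwdiv\bsigma_h-f)}{\varepsilon\pwdiv(\btau-\Pi_\Sigma\btau)}_\Omega
    - \ip{\bsigma_h}{\btau-\Pi_\Sigma\btau}_\Omega,
\end{align*}
and here the extra orthogonality $\ip{\bq}{\btau-\Pi_\Sigma\btau}_\Omega=0$ for $\bq\in P^0(\cT)^d$ lets me insert $1-\Pi_\cT^0$ in both factors, yielding $\|(1-\Pi_\cT^0)(\varepsilon\div\bsigma_h-f)\|_\Omega$ and $\|(1-\Pi_\cT^0)\bsigma_h\|_\Omega$ times $\|\btau\|_{\Sigma,\varepsilon}$; note that no $H^2$-regularity of a Helmholtz potential enters here, which is why the dual estimator needs no convexity assumption.

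The remaining term is $b^\mathrm{dual}(v,\bsigma-\bsigma_h)=-\varepsilon\dual{(\bsigma-\bsigma_h)\cdot\normal}{v}_{\partial\cT}$ for $v\in W$, and this is where the jump term $\min\{\varepsilon,h\}\|\jump{\bsigma_h\cdot\normal}\|_{\partial T\setminus\partial\Omega}^2$ will appear. Since $\bsigma\in\Hdivset\Omega$, Lemma~\ref{lem:cont} gives $\dual{(\bsigma)\cdot\normal}{v}_{\partial\cT}=0$ for $v\in W$, so the term equals $\varepsilon\dual{\bsigma_h\cdot\normal}{v}_{\partial\cT}$; using $b^\mathrm{dual}(v_h,\bsigma_h)=0$ from~\eqref{eq:dualdglm:disc:b} for $v_h\in W_h$ I can subtract a discrete trace, rewriting the sum over facets as $\sum_F\varepsilon\dual{\jump{\bsigma_h\cdot\normal}}{w-w_h^{\mathrm{lift}}}_F$ for a suitable representative. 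Then, picking $v=\trnabla_\cT\phi$ with $\phi\in H^1_0(\Omega)$ realizing the minimum-energy norm and applying a Clément-type quasi-interpolant $J_{\cT,0}\phi\in P^1(\cT)\cap H^1_0(\Omega)$ (whose trace lies in $W_h$), the approximation properties~\eqref{eq:propquasiint} and the trace inequality give $\varepsilon\|(\phi-J_{\cT,0}\phi)\|_{\partial T}\lesssim \varepsilon h_T^{1/2}\|\nabla\phi\|_{\Omega_T}$; balancing this against the factor $\varepsilon^{1/2}$ and localizing yields the weight $\min\{\varepsilon,h_T\}$ on the jump (the $\min$ arises because on coarse elements, $\varepsilon<h_T$, one uses the $L^2$-part $\|\phi\|$ instead, contributing $\varepsilon^{1/2}h_T^{-1/2}\cdot\varepsilon^{1/2}$).

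The main obstacle I anticipate is the bookkeeping in the jump-term estimate: getting the correct weight $\min\{\varepsilon,h\}$ uniformly requires carefully splitting into the regimes $\varepsilon<h_T$ and $\varepsilon\geq h_T$ and using the two-sided trace inequality $\|v\|_{\partial T}\lesssim h_T^{-1/2}\|v\|_T+h_T^{1/2}\|\nabla v\|_T$ with the right interpolation between the two terms, together with the bound $\|\nabla\phi\|_\Omega\lesssim\|w\|_{W,\varepsilon}$ and $\varepsilon\|\nabla\phi\|_\Omega\lesssim\|w\|_{W,\varepsilon}$. A secondary subtlety is ensuring that the representative $w-w_h^{\mathrm{lift}}$ used to express $\varepsilon\dual{\jump{\bsigma_h\cdot\normal}}{\cdot}_F$ can legitimately be taken as $\phi-J_{\cT,0}\phi$ restricted to the skeleton — this follows because $w=\trnabla_\cT\phi$, $\trnabla_\cT(J_{\cT,0}\phi)\in W_h$, and the pairing $\dual{\jump{\bsigma_h\cdot\normal}}{\cdot}_F$ only sees facet traces, so the single-valued function $\phi-J_{\cT,0}\phi$ is an admissible tester after subtracting the discrete component; I would spell this reduction out using Lemma~\ref{lem:traceinfsup} to control the dual norm.
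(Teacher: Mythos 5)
Your proposal matches the paper's proof essentially step for step: the paper likewise reduces the volume residual via the augmented Fortin operator of Theorem~\ref{thm:fortin:dual:aug} (referring back to the PHFEM argument for those details) and localizes the remaining term $\varepsilon\dual{(\bsigma-\bsigma_h)\cdot\normal}{w}_{\partial\cT}$ by subtracting $\trnabla_\cT J_{\cT,0}\widetilde w\in W_h$ and combining the two trace-inequality bounds to produce the weight $\min\{\varepsilon,h_T\}$ on the normal jumps. Your remark that no Helmholtz decomposition or elliptic regularity enters is precisely why the paper states the dual reliability result without a convexity hypothesis.
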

\begin{proof}
  The first part of the proof is similar to the proof of Theorem~\ref{thm:dglm:apost} which yields the estimate
  \begin{align*}
    \|\bsigma-\bsigma_h\|_{\Sigma,\varepsilon} + \|w-w_h\|_{W,\varepsilon}
    &\lesssim \|(1-\Pi_h^0)(\varepsilon\pwdiv\bsigma_h-f)\|_\Omega^2 + \|(1-\Pi_h^0)\bsigma_h\|_\Omega^2
    \\
    &\qquad\qquad + \sup_{0\neq w\in W} \frac{\varepsilon\dual{(\bsigma-\bsigma_h)\cdot\normal}{w}_{\partial\cT}}{\|w\|_{W,\varepsilon}}.
  \end{align*}
  It only remains to localize the last term on the right-hand side. 
  Note that by continuity of normal components of $\bsigma$ we have that $\dual{\bsigma\cdot\normal}{w}_{\partial\cT} = 0$ for all $w\in W$.
  Given $w\in W$, let $\widetilde w\in H_0^1(\Omega)$ be such that $\trnabla_\cT \widetilde w=w$ and $\|\widetilde w\|_{1,\varepsilon,\Omega} = \|w\|_{W,\varepsilon}$.
  Further, let $w_h = \trnabla_\cT J_{\cT,0} \widetilde w \in W_h$.
  Then, \eqref{eq:dualdglm:disc:b} implies that $\dual{\bsigma_h\cdot\normal}{w_h}_{\partial\cT} = 0$. Therefore, 
  \begin{align*}
    \varepsilon\dual{(\bsigma-\bsigma_h)\cdot\normal}{w}_{\partial\cT} = -\varepsilon\dual{\bsigma_h\cdot\normal}{w-w_h}_{\partial\cT}
    &= \sum_{F\in\cF_\mathrm{int}} -\varepsilon\dual{\jump{\bsigma_h\cdot\normal}}{w-w_h}_F
    \\
    &\leq \sum_{F\in\cF_\mathrm{int}} \varepsilon\|\jump{\bsigma_h\cdot\normal}\|_F\|w-w_h\|_F.
  \end{align*}
  Here, $\cF_\mathrm{int}\subset \cF$ denotes the set of interior facets.
  With the properties of operator $J_{\cT,0}$ given in~\eqref{eq:propquasiint} we may estimate 
  \begin{align*}
    \varepsilon\|w-w_h\|_F \lesssim h_T^{1/2}\varepsilon\|\nabla \widetilde w\|_{\Omega_T} \quad \text{for } T \text{ with } F\in\cF_T.
  \end{align*}
  Alternatively, we may invoke the trace inequality in a different way and estimate the very same term by
  \begin{align*}
    \varepsilon\|w-w_h\|_F &\lesssim \frac{1}{h_T^{1/2}}\|(1-J_{\cT,0})\widetilde w\|_T^{1/2}(\|(1-J_{\cT,0})\widetilde w\|_T+h_T\|\nabla(1-J_{\cT,0})\widetilde w\|_T)^{1/2}
    \\
    &\lesssim 
    \varepsilon^{1/2}\|\widetilde w\|_{\Omega_T}^{1/2}\varepsilon^{1/2}\|\nabla\widetilde w\|_{\Omega_T}^{1/2}
    \lesssim \varepsilon^{1/2}(\|\widetilde w\|_{\Omega_T} + \varepsilon\|\nabla\widetilde w\|_{\Omega_T}).
  \end{align*}
  We conclude that
  \begin{align*}
    \varepsilon\|w-w_h\|_F \lesssim \min\{\varepsilon^{1/2},h_T^{1/2}\} \|\widetilde w\|_{1,\varepsilon,\Omega_T}.
  \end{align*}
  Consequently, 
  \begin{align*}
    \sup_{0\neq w\in W} \frac{\varepsilon\dual{(\bsigma-\bsigma_h)\cdot\normal}{w}_{\partial\cT}}{\|w\|_{W,\varepsilon}}
    \lesssim \left(\sum_{T\in\cT} \min\{\varepsilon,h_T\}\|\jump{\bsigma_h\cdot\normal}\|_{\partial T\setminus\partial\Omega}^2\right)^{1/2}.
  \end{align*}
  This finishes the proof.
\end{proof}

To analyze an efficiency-type estimate we need the following technical result. 
We follow similar steps as in~\cite{Verf94} but to tackle the parameter-dependent norms we make use of the modified face bubble functions defined above. 
\begin{lemma}\label{lem:effTraceSigma}
  Let $T\in\cT$. Then, 
  \begin{align*}
    \min\{\varepsilon^{1/2},h_T^{1/2}\}\|\jump{\bsigma_h\cdot\normal}\|_{\partial T} \lesssim \|\bsigma-\bsigma_h\|_{\Omega_T} + \varepsilon \|\div\bsigma-\pwdiv\bsigma_h\|_{\Omega_T}.
  \end{align*}
\end{lemma}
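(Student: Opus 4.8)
The plan is to adapt Verfürth's bubble function technique to the parameter-dependent norms, using the modified face bubble functions $\eeta_{F,\varepsilon}^T$ (or $\widetilde\eeta_{F,\varepsilon}^T$) in place of the usual polynomial ones. Fix an interior facet $F\in\cF_T$ shared by $T$ and a neighbour $T'$, and abbreviate $\patch_F = \Omega_F = \mathrm{int}(\overline T\cup\overline{T'})$. Since $\jump{\bsigma_h\cdot\normal}|_F$ is a constant (because $\bsigma_h\in\Sigma_h$ has lowest-order normal traces), I would write $J_F := \jump{\bsigma_h\cdot\normal}|_F$ and take as test function $\bphi_F := J_F\, \eeta_{F,\varepsilon}^{T}$ on $T$ and $\bphi_F := J_F\, \eeta_{F,\varepsilon}^{T'}$ on $T'$, extended by zero elsewhere; by construction $\eeta_{F,\varepsilon}^T|_F = \eeta_F^T|_F$ so the traces match up to the constant $J_F$ and $\bphi_F$ lies in $\Hdivset{\patch_F}$ with vanishing normal trace on $\partial\patch_F$. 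The scaling estimates from Lemma~\ref{lem:propEtaFe} (resp.~Lemma~\ref{ref:lem:propEtaFe:alt}) give $\|\bphi_F\|_{\patch_F}\lesssim |J_F|\,(h_T^{d-1}\varepsilon)^{1/2}$ and $\|\pwdiv\bphi_F\|_{\patch_F}\lesssim |J_F|\,h_T^{-1}(h_T^{d-1}\varepsilon/\varepsilon^2)^{1/2}\cdot\varepsilon = |J_F|\,(h_T^{d-1}/\varepsilon)^{1/2}$, hence
\begin{align*}
  \|\bphi_F\|_{\div,\varepsilon,\patch_F} \lesssim |J_F|\,(h_T^{d-1}\varepsilon)^{1/2} = \varepsilon^{1/2}\,\min\{\ldots\}^{-1}\cdot\bigl(\min\{\varepsilon^{1/2},h_T^{1/2}\}\,|J_F|\,h_T^{(d-1)/2}\bigr),
\end{align*}
i.e.\ $\|\bphi_F\|_{\div,\varepsilon,\patch_F}\lesssim \min\{\varepsilon^{1/2},h_T^{1/2}\}\,\|J_F\|_F\,h_T^{(d-1)/2}\cdot h_T^{-(d-1)/2}$ up to the right power bookkeeping; the point is that both contributions to $\|\bphi_F\|_{\div,\varepsilon}$ are bounded by $\min\{\varepsilon^{1/2},h_T^{1/2}\}\|J_F\|_F$.

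Next I would test the error equation with $\bphi_F$. Using the continuity of normal traces of $\bsigma$ (so $\dual{\jump{\bsigma\cdot\normal}}{\cdot}_F=0$) and integration by parts elementwise on $T$ and $T'$, one gets
\begin{align*}
  \|J_F\|_F^2 \eqsim \dual{\jump{(\bsigma_h-\bsigma)\cdot\normal}}{J_F\,\eeta_{F,\varepsilon}}_F
  = \sum_{T''\in\{T,T'\}} \Bigl(\ip{\bsigma_h-\bsigma}{\nabla\bphi_F}_{T''} + \ip{\div(\bsigma_h-\bsigma)}{\bphi_F}_{T''}\Bigr),
\end{align*}
where I also used an equivalence $\|J_F\,\eeta_{F,\varepsilon}\|_F^2\eqsim \|J_F\|_F^2\cdot\|\eeta_F\|_F^2 \eqsim h_T^{d-1}\|J_F\|_F^2$ coming from the fact that $\eeta_{F,\varepsilon}|_F=\eeta_F|_F$ is a fixed polynomial face bubble and a norm-equivalence on the finite-dimensional facet space. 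Then Cauchy–Schwarz gives
\begin{align*}
  h_T^{d-1}\|J_F\|_F^2 \lesssim \bigl(\|\bsigma-\bsigma_h\|_{\patch_F} + \varepsilon\|\div\bsigma-\pwdiv\bsigma_h\|_{\patch_F}\bigr)\cdot\varepsilon^{-1/2}\|\bphi_F\|_{\div,\varepsilon,\patch_F},
\end{align*}
and inserting the bound $\|\bphi_F\|_{\div,\varepsilon,\patch_F}\lesssim \min\{\varepsilon^{1/2},h_T^{1/2}\}\,h_T^{(d-1)/2}\|J_F\|_F$ and dividing yields
\begin{align*}
  \min\{\varepsilon^{1/2},h_T^{1/2}\}\,h_T^{(d-1)/2}\|J_F\|_F \lesssim \|\bsigma-\bsigma_h\|_{\patch_F} + \varepsilon\|\div\bsigma-\pwdiv\bsigma_h\|_{\patch_F}.
\end{align*}
Summing over the (at most $d+1$, by shape-regularity) facets $F\in\cF_T$ and absorbing the harmless $h_T^{(d-1)/2}$-scaling into the $\lesssim$ — or, more carefully, keeping track that $\|\jump{\bsigma_h\cdot\normal}\|_{\partial T}^2 = \sum_{F\in\cF_T}\|J_F\|_F^2 \eqsim \sum_F h_T^{d-1}|J_F|^2$ where $|J_F|$ is the pointwise constant value — gives exactly the claimed estimate.

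The main obstacle I anticipate is the careful power-counting in $h_T$ and $\varepsilon$: one must verify that the factor $\varepsilon^{-1/2}$ paid when passing from $\|\bphi_F\|_{\div,\varepsilon}$ into the duality pairing (the pairing on $F$ carries a weight $\varepsilon$, but the natural bound on $\bphi_F$ is in the $\div,\varepsilon$-norm, not a scaled $L_2$ norm) combines with the two scaling estimates of Lemma~\ref{lem:propEtaFe} to produce precisely $\min\{\varepsilon^{1/2},h_T^{1/2}\}$ and nothing worse — in particular the $\varepsilon\geq h_T$ branch must be checked separately, where $\eeta_{F,\varepsilon}=\eeta_F$ is the ordinary bubble and the classical estimates $\|\eeta_F\|_T\eqsim h_T^{d/2}$, $\|\nabla\eeta_F\|_T\eqsim h_T^{d/2-1}$ apply, leading to the $h_T^{1/2}$-weight. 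A secondary technical point is justifying the norm equivalence $\|J_F\,\eeta_{F,\varepsilon}\|_F^2\eqsim h_T^{d-1}\|J_F\|_F^2$ and that the $\div$-part of $\bphi_F$ does not blow up; this is where the exponential (or piecewise-polynomial) decay of $\eeta_{F,\varepsilon}$ into the interior is essential, exactly as in Lemma~\ref{lem:propEtaFe} and Lemma~\ref{ref:lem:propEtaFe:alt}.
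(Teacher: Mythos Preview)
Your overall strategy --- replace the polynomial face bubble in Verf\"urth's technique by the modified bubble $\eta_{F,\varepsilon}$ and split into the regimes $\varepsilon\ge h_T$ and $\varepsilon<h_T$ --- is exactly what the paper does. But two concrete errors in your construction prevent the argument from going through as written.

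First, the jump $J_F=\jump{\bsigma_h\cdot\normal}|_F$ is \emph{not} a constant. Recall that $\Sigma_{h,\varepsilon}(T)$ contains the functions $\eeta_{F,\varepsilon}^T=\eta_{F,\varepsilon}^T\normal_T|_F$, whose normal trace on $F$ equals $\eta_F^T|_F$ (the polynomial face bubble), while the edge functions $\eta_E^T\tangential_E^T$ contribute nothing to the normal trace. Hence $\bsigma_h\cdot\normal|_F$ lies in $\linhull\{1,\eta_F|_F\}\subset P^d(F)$, and so does the jump. You therefore cannot take $J_F$ as a scalar multiplying the bubble; you need an extension operator $P\colon P^d(F)\to C^0(\Omega_F)$ for the full jump, with the standard scaling $\|P\sigma\|_{\Omega_F}\eqsim h_T^{1/2}\|\sigma\|_F$, $\|\nabla P\sigma\|_{\Omega_F}\lesssim h_T^{-1/2}\|\sigma\|_F$, $\|P\sigma\|_{L_\infty(\Omega_F)}\lesssim |F|^{-1/2}\|\sigma\|_F$.

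Second, the test function must be \emph{scalar}, not the vector $\eeta_{F,\varepsilon}$. The pairing $\dual{\jump{(\bsigma_h-\bsigma)\cdot\normal}}{\cdot}_F$ acts on scalar traces, and the elementwise integration-by-parts identity you need is
\[
  \dual{\jump{(\bsigma_h-\bsigma)\cdot\normal}}{\phi}_F
  = \sum_{T''\subset\Omega_F}\bigl(\ip{\bsigma_h-\bsigma}{\nabla\phi}_{T''}+\ip{\pwdiv(\bsigma_h-\bsigma)}{\phi}_{T''}\bigr)
\]
for $\phi\in H^1_0(\Omega_F)$. With a vector $\bphi_F$ the expression $\ip{\bsigma_h-\bsigma}{\nabla\bphi_F}$ is ill-typed, and your claim $\bphi_F\in\Hdivset{\patch_F}$ is in fact false: the normal component of $J_F\eeta_{F,\varepsilon}^{T}$ and $J_F\eeta_{F,\varepsilon}^{T'}$ on $F$ point in opposite directions, so the normal jump does not vanish. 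The correct choice is the scalar $\phi=\eta_{F,\varepsilon}\,P\sigma$ with $\sigma=\jump{\bsigma_h\cdot\normal}|_F$; then the product rule for $\nabla(\eta_{F,\varepsilon}P\sigma)$ combined with Lemma~\ref{lem:propEtaFe} and the scaling of $P$ gives precisely the $\varepsilon^{1/2}$-weight in the regime $\varepsilon<h_T$, while the classical bubble $\eta_F$ handles $\varepsilon\ge h_T$. Your power-counting concern is then resolved automatically; it is the test-function construction, not the bookkeeping, that needs repair.
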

\begin{proof}
  First, observe that $\jump{\bsigma_h\cdot\normal}|_F \in P^2(F)$. We recall that there exists an extension operator $P\colon P^2(F)\to C^0(\Omega_F)$ such that
  \begin{align*}
    \diam(F)\|\sigma\|_F^2 &\eqsim \diam(F) \|\sigma\eta_F^{1/2}\|_F^2 \eqsim \|P\sigma\|_{\Omega_F}^2, \qquad
    \|\nabla P\sigma\|_{\Omega_F} \lesssim \diam(F)^{-1/2} \|\sigma\|_F, \\
    \|P\sigma\|_{L_\infty(\Omega_F)} &\lesssim |F|^{-1/2}\|\sigma\|_F.
  \end{align*}
  We consider two cases, $h_T\leq \varepsilon$ and $\varepsilon<h_T$. 
  First, suppose $h_T\leq \varepsilon$. Verf\"urth's bubble function technique yields
  \begin{align*}
    h_T^{1/2}\|\jump{\bsigma_h\cdot\normal}\|_{\partial T} \lesssim \|\bsigma-\bsigma_h\|_{\Omega_T} + h_T\|\div\bsigma-\pwdiv\bsigma\|_{\Omega_T} \leq \|\bsigma-\bsigma_h\|_{\Omega_T} + \varepsilon \|\div\bsigma-\pwdiv\bsigma\|_{\Omega_T}.
  \end{align*}
  Second, suppose $\varepsilon < h_T$. Let $F\in \cF_T$ be given. 
  Set $\sigma = \varepsilon^{1/2}\jump{\bsigma\cdot\normal}|_F\in P^2(F)$. Then, 
  \begin{align*}
    \varepsilon \|\jump{\bsigma\cdot\normal}\|_F^2 &= \varepsilon^{1/2}\dual{\jump{\bsigma\cdot\normal}}{\sigma}_F
    \eqsim \varepsilon^{1/2}\dual{\jump{\bsigma\cdot\normal}}{\sigma\eta_{F,\varepsilon}}_F
    \\
    &= \varepsilon^{1/2}\ip{\bsigma-\bsigma_h}{\nabla(\eta_{F,\varepsilon}P\sigma)}_{\Omega_F} + 
    \varepsilon^{1/2}\ip{\pwdiv(\bsigma-\bsigma_h)}{\eta_{F,\varepsilon}P\sigma}_{\Omega_F}.
  \end{align*}
  Here we set $\eta_{F,\varepsilon}|_{T'}= \eta_{F,\varepsilon}^{T'}$ for all $T'\in\cT$ with $F\in\cF_{T'}$.
  Using the scaling properties (Lemma~\ref{lem:propEtaFe}) of the modified face bubble functions and the properties of the extension operator $P$ given at the beginning of the proof, we estimate the last terms on the right-hand side as follows:
  \begin{align*}
    \varepsilon^{1/2}\ip{\pwdiv(\bsigma-\bsigma_h)}{\eta_{F,\varepsilon}P\sigma}_{\Omega_F}
    &\lesssim \varepsilon^{1/2}\|\pwdiv(\bsigma-\bsigma_h)\|_{\Omega_F}\|P\sigma\|_{L_\infty(\Omega_F)}\frac{\varepsilon^{1/2}}{h_T^{1/2}} |\Omega_F|^{1/2}\\
    &\lesssim \varepsilon \|\pwdiv(\bsigma-\bsigma_h)\|_{\Omega_F} \|\sigma\|_F, \\
    \varepsilon^{1/2}\ip{\bsigma-\bsigma_h}{\nabla(\eta_{F,\varepsilon}P\sigma)}_{\Omega_F} &=
    \varepsilon^{1/2}\ip{\bsigma-\bsigma_h}{\nabla(\eta_{F,\varepsilon})P\sigma}_{\Omega_F}
    + \varepsilon^{1/2}\ip{\bsigma-\bsigma_h}{\eta_{F,\varepsilon}\nabla P\sigma}_{\Omega_F}
    \\
    &\lesssim \|\bsigma-\bsigma_h\|_{\Omega_F}\varepsilon^{1/2}\frac{h_T^{1/2}}{\varepsilon^{1/2}} |T|^{1/2} h_T^{-1}\|P\sigma\|_{L_\infty(\Omega_F)} 
    \\
    &\qquad+ \|\bsigma-\bsigma_h\|_{\Omega_F}\varepsilon^{1/2}\|\nabla P\sigma\|_{\Omega_F}
    \\
    &\lesssim \|\bsigma-\bsigma_h\|_{\Omega_F}\|\sigma\|_F.
  \end{align*}
  In the last estimate we also used that $\varepsilon<h_T$. Putting all estimates together concludes the proof.
\end{proof}

The next results follows directly from the last one and the triangle inequality.
\begin{corollary}
  Let $(\bsigma,w)$ and $(\bsigma_h,w_h)$ denote the unique solutions of~\eqref{eq:dualdglm} and~\eqref{eq:dualdglm:disc}, respectively. Then, for any $T\in\cT$ the inequality
  \begin{align*}
    \xi(T)^2 &\lesssim \|\bsigma-\bsigma_h\|_{\Omega_T}^2 + \varepsilon^2\|\pwdiv(\bsigma-\bsigma_h)\|_{\Omega_T}^2 
    \\
    &\qquad + \min_{\btau_h\in P^0(T)^d} \|\bsigma-\btau_h\|_{T}^2 + \min_{v_h\in P^0(T)}\varepsilon^2\|\pwdiv \bsigma-v_h\|_{T}^2
    + \|(1-\Pi_T^0)f\|_T^2
  \end{align*}
  holds.\qed
\end{corollary}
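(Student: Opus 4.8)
The plan is to bound the three summands of $\xi(T)^2$ one at a time, in each case inserting the exact solution $(\bsigma,w)$ and using that $1-\Pi_T^0$ is a contraction on $L_2(T)$ (so that $\|(1-\Pi_T^0)v\|_T$ equals the best $L_2(T)$-approximation of $v$ by constants). For the \emph{jump term} I would simply square the estimate of Lemma~\ref{lem:effTraceSigma} and bound $\|\jump{\bsigma_h\cdot\normal}\|_{\partial T\setminus\partial\Omega}\le\|\jump{\bsigma_h\cdot\normal}\|_{\partial T}$, which yields
\begin{align*}
  \min\{\varepsilon,h_T\}\,\|\jump{\bsigma_h\cdot\normal}\|_{\partial T\setminus\partial\Omega}^2 \lesssim \|\bsigma-\bsigma_h\|_{\Omega_T}^2 + \varepsilon^2\|\pwdiv(\bsigma-\bsigma_h)\|_{\Omega_T}^2 .
\end{align*}
This is the only step that uses the modified face bubble functions, and the technical work behind it has already been carried out in Lemma~\ref{lem:effTraceSigma}.

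For the \emph{flux term} I would write $(1-\Pi_T^0)\bsigma_h=(1-\Pi_T^0)(\bsigma_h-\bsigma)+(1-\Pi_T^0)\bsigma$ and obtain, by the triangle inequality and $L_2(T)$-contractivity of $\Pi_T^0$,
\begin{align*}
  \|(1-\Pi_T^0)\bsigma_h\|_T^2 \lesssim \|\bsigma-\bsigma_h\|_T^2 + \min_{\btau_h\in P^0(T)^d}\|\bsigma-\btau_h\|_T^2 .
\end{align*}
For the \emph{residual term} I would insert $\pm\varepsilon\pwdiv\bsigma$ and split once more,
\begin{align*}
  \|(1-\Pi_T^0)(\varepsilon\pwdiv\bsigma_h-f)\|_T
  &\lesssim \varepsilon\|\pwdiv(\bsigma-\bsigma_h)\|_T + \varepsilon\|(1-\Pi_T^0)\pwdiv\bsigma\|_T + \|(1-\Pi_T^0)f\|_T \\
  &= \varepsilon\|\pwdiv(\bsigma-\bsigma_h)\|_T + \varepsilon\min_{v_h\in P^0(T)}\|\pwdiv\bsigma-v_h\|_T + \|(1-\Pi_T^0)f\|_T .
\end{align*}
Squaring these displays, summing the three contributions to $\xi(T)^2$ and enlarging $T$ to $\Omega_T$ where convenient then gives exactly the claimed bound.

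The statement is genuinely a corollary, so there is no substantial obstacle beyond Lemma~\ref{lem:effTraceSigma}; the only points that require a little care are keeping the $\varepsilon$-powers straight in the residual term and pairing each discrete quantity with the exact quantity that turns it into a best-approximation term, so that $\|(1-\Pi_T^0)f\|_T$ is the only genuinely data-dependent leftover.
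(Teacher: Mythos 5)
Your argument is correct and is exactly the paper's intended proof: the paper states only that the corollary ``follows directly from the last one [Lemma~\ref{lem:effTraceSigma}] and the triangle inequality,'' and your term-by-term splitting (jump term via squaring Lemma~\ref{lem:effTraceSigma}, the other two terms via inserting the exact solution and using that $\Pi_T^0$ is the $L_2(T)$-orthogonal projection) fills in precisely those routine steps.
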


\section{Numerical experiments}\label{sec:numeric}
In this section we present numerical experiments using the two hybrid methods introduced above in Sections~\ref{sec:dglm} and~\ref{sec:dualdglm}, respectively. 
We compare our results to the lowest-order continuous Galerkin method given by: Find $\uhcg\in P^1(\cT)\cap H_0^1(\Omega)$ such that
\begin{align}\label{eq:fem}
  \varepsilon^2\ip{\nabla\uhcg}{\nabla v}_\Omega + \ip{\uhcg}v_\Omega = \ip{f}{v}_\Omega \quad\forall v\in P^1(\cT)\cap H_0^1(\Omega).
\end{align}
Throughout, we solve the PHFEM with spaces $U_h\times\Lambda_h$ as given in Section~\ref{sec:apost:dglm}, and the DHFEM with spaces $\Sigma_h\times W_h$ as given in Section~\ref{sec:apost:dualdglm}.

\subsection{Manufactured solution on square domain}\label{sec:numeric:ex1}
Let $\Omega = (0,1)^2$. We consider the manufactured solution (see also~\cite[Section~5.2]{FuehrerHeuerFortin})
\begin{align*}
  u(x,y) = v(x)v(y), 
  \quad
  v(t) = 1-(1-e^{-1/(\sqrt{2}\varepsilon)})\frac{e^{-(1-t)/(\sqrt{2}\varepsilon)}+e^{-t/(\sqrt{2}\varepsilon)}}{1-e^{-2/(\sqrt{2}\varepsilon)}}.
\end{align*}
Function $u$ satisfies model problem~\eqref{eq:model} with
\begin{align*}
  f(x,y) = \frac12(v(x)+v(y)). 
\end{align*}

In Figure~\ref{fig:exp1:dglmFEM} we compare $\Pi_h^0 u_h$ where $(u_h,\lambda_h)\in U_h\times \Lambda_h$ is the solution of~\eqref{eq:dglm:disc}, to the solution $\uhcg$ of~\eqref{eq:fem}. Figure~\ref{fig:exp1:dualdglmFEM} visualizes $\Pi_h^0u_h^\mathrm{dual}$ on two meshes where we recall that $u_h^\mathrm{dual} = \varepsilon\pwdiv\bsigma_h+f$ with $(\bsigma_h,w_h)\in \Sigma_h\times W_h$ the solution of~\eqref{eq:dualdglm:disc}. In all cases we have used $\varepsilon=10^{-4}$.
We find that both $\Pi_h^0 u_h$ and $\Pi_h^0 u_h^\mathrm{dual}$ only show marginal oscillations whereas $\uhcg$ shows considerable oscillations close to the boundary of the domain.

\begin{figure}
  \begin{center}
    \begin{tikzpicture}
  \begin{groupplot}[
      group style={group size=2 by 2, horizontal sep=2cm, vertical sep=2cm},
      width=0.5\textwidth,
      ylabel={$y$},
      xlabel={$x$},
    ]
    \nextgroupplot[title={$\Pi_h^0u_h$, $\#\cT=64$},zmin=0,zmax=1]
      \addplot3[patch] table{data/Example1DGLM_solP0_00064.dat};
    \nextgroupplot[title={$\uhcg$, $\#\cT=64$}]
      \addplot3[patch] table{data/Example1DGLM_solS1_00064.dat};
    \nextgroupplot[title={$\Pi_h^0u_h$, $\#\cT=1024$},zmin=0,zmax=1]
      \addplot3[patch] table{data/Example1DGLM_solP0_01024.dat};
    \nextgroupplot[title={$\uhcg$, $\#\cT=1024$}]
      \addplot3[patch] table{data/Example1DGLM_solS1_01024.dat};
  \end{groupplot}
\end{tikzpicture}
  \end{center}
  \caption{Comparison of the projected PHFEM solution component $\Pi_h^0u_h$ and the lowest-order continuous Galerkin solution $\uhcg$ on meshes with $\#\cT = 64$ (upper row) and $\#\cT=1024$ (lower row).}
  \label{fig:exp1:dglmFEM}
\end{figure}
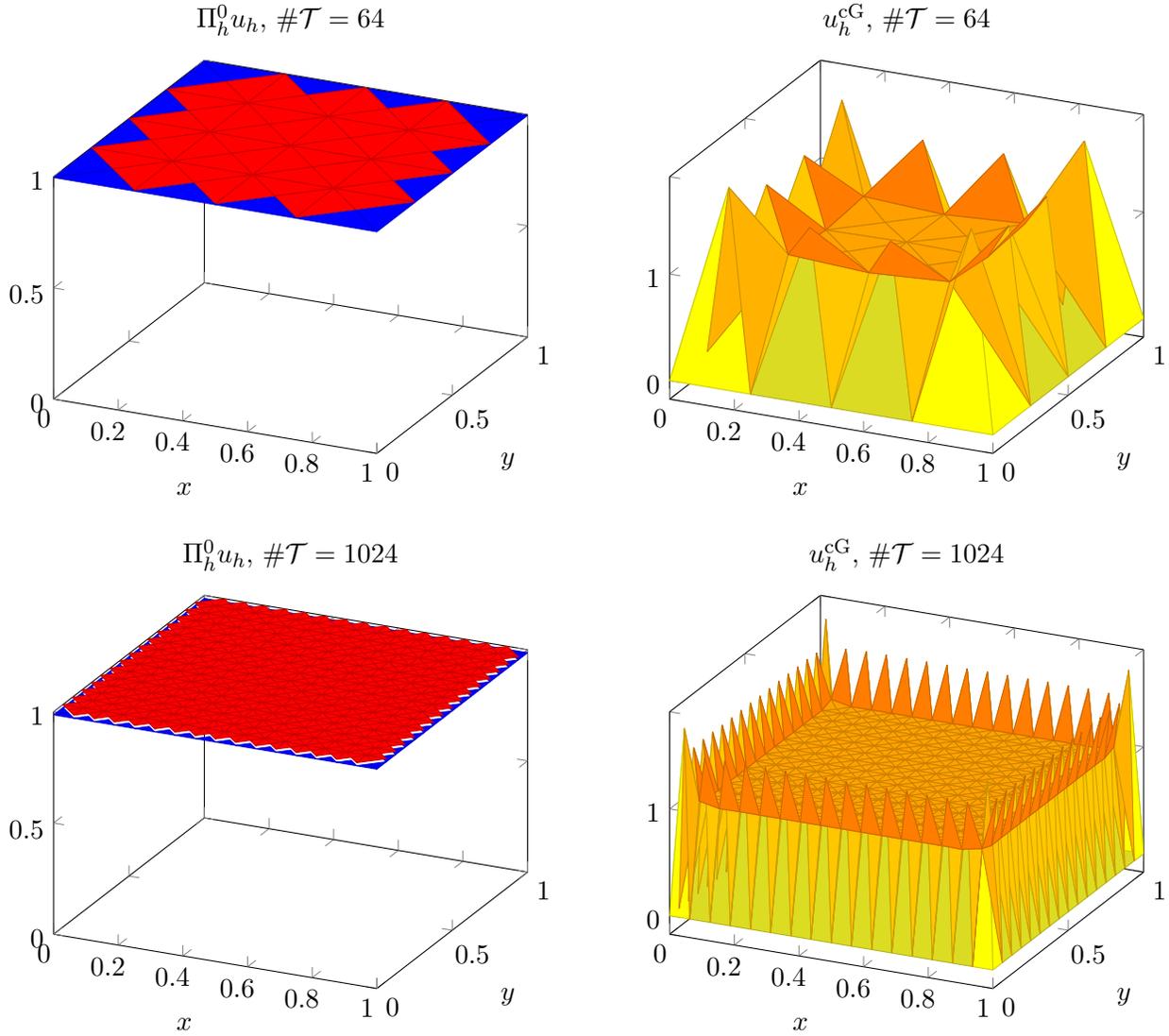

\begin{figure}
  \begin{center}
    \begin{tikzpicture}
  \begin{groupplot}[
      group style={group size=2 by 1, horizontal sep=2cm, vertical sep=2cm},
      width=0.5\textwidth,
      ylabel={$y$},
      xlabel={$x$},
      zmin=0,
      zmax=1.1,
    ]
    \nextgroupplot[title={$\Pi_h^0u_h^\mathrm{dual}$, $\#\cT=64$, $\varepsilon=10^{-4}$}]
      \addplot3[patch] table{data/Example1DualDGLM_solP0_00064.dat};
      \nextgroupplot[title={$\Pi_h^0u_h^\mathrm{dual}$, $\#\cT=1024$, $\varepsilon=10^{-4}$}]
      \addplot3[patch] table{data/Example1DualDGLM_solP0_01024.dat};
  \end{groupplot}
\end{tikzpicture}
  \end{center}
  \caption{Postprocessed solution $\Pi_h^0u_h^\mathrm{dual}$ of the DHFEM on two meshes and $\varepsilon=10^{-4}$.}
  \label{fig:exp1:dualdglmFEM}
\end{figure}
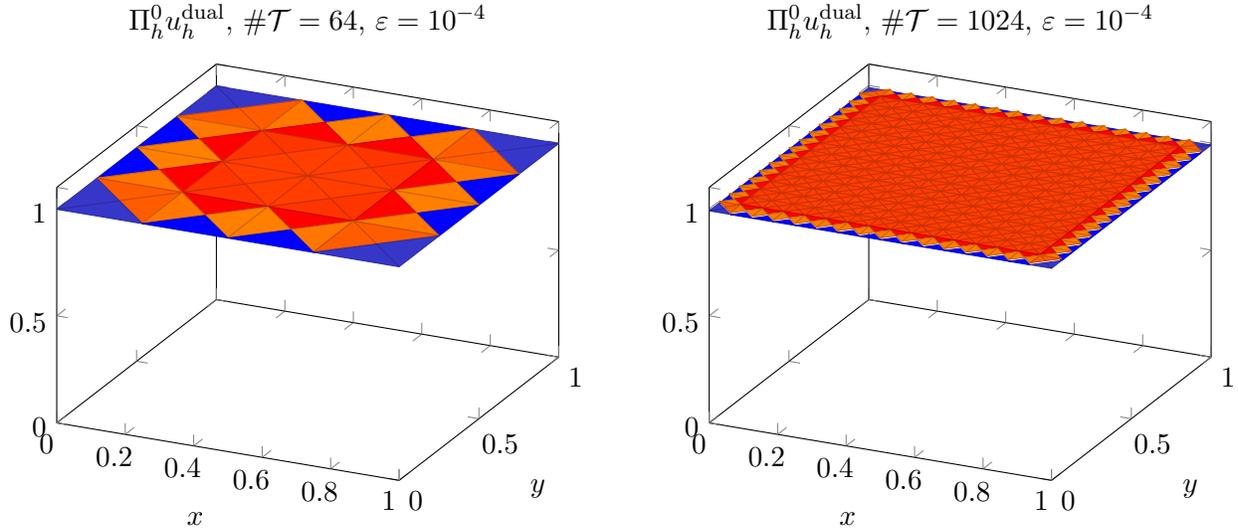

This is also reflected in the $L_2(\Omega)$ errors as depicted in Figure~\ref{fig:exp1:errL2}. For coarse meshes there is a significant difference between the continuous Galerkin method and the PHFEM resp. DHFEM. 
Note that $\#\mathrm{dof} = \dim(\Lambda_h) = \#\cF$ for the PHFEM, whereas $\#\mathrm{dof} = \dim(W_h) = \dim(P^1(\cT)\cap H_0^1(\Omega)) = \#\cV_0$ (number of interior vertices of the mesh $\cT$).
Note that the left plot shows the $L_2(\Omega)$ errors of projections onto $P^0(\cT)$, whereas the right plot shows the errors of the projections onto $P^1(\cT)$.

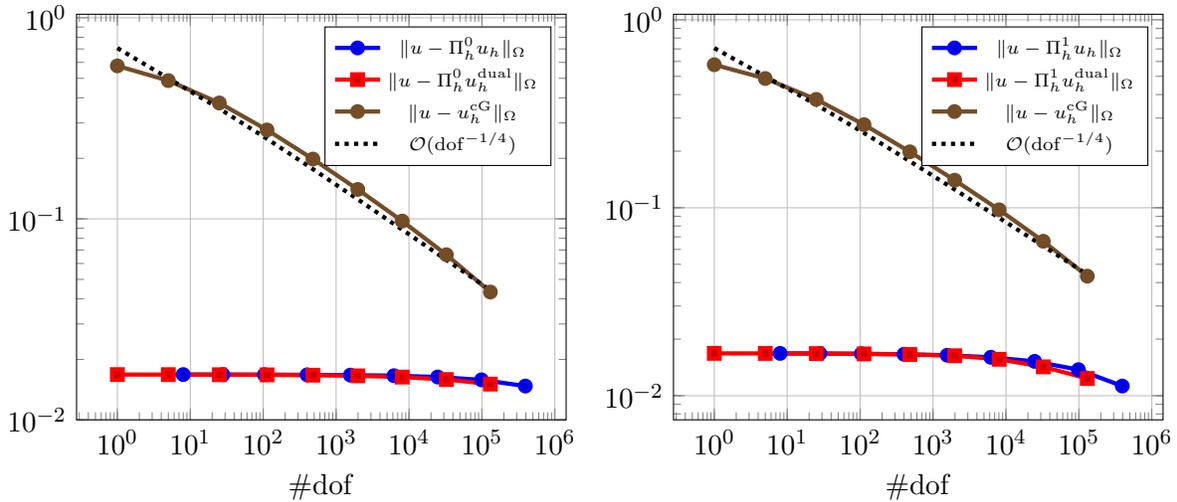
\begin{figure}
  \begin{center}
    \begin{tikzpicture}
\begin{loglogaxis}[
width=0.49\textwidth,
xlabel={$\#\mathrm{dof}$},
grid=major,
legend entries={{\tiny $\|u-\Pi_h^0u_h\|_\Omega$},{\tiny $\|u-\Pi_h^0u_h^\mathrm{dual}\|_\Omega$},{\tiny $\|u-\uhcg\|_\Omega$},{\tiny $\OO(\mathrm{dof}^{-1/4})$}},
legend pos=north east,
every axis plot/.append style={ultra thick},
]
\addplot table [x=dof,y=errUP0L2] {data/Example1_DGLM_errors.dat};
\addplot table [x=dof,y=errUP0L2] {data/Example1_DualDGLM_errors.dat};
\addplot table [x=dof,y=errUS1L2] {data/Example1_DualDGLM_errors.dat};
\addplot [black,dotted,mark=none] table [x=dof,y expr={sqrt(\thisrowno{1})^(-1/2)}] {data/Example1_DualDGLM_errors.dat};

\end{loglogaxis}
\end{tikzpicture}
\begin{tikzpicture}
\begin{loglogaxis}[
width=0.49\textwidth,
xlabel={$\#\mathrm{dof}$},
grid=major,
legend entries={{\tiny $\|u-\Pi_h^1u_h\|_\Omega$},{\tiny $\|u-\Pi_h^1u_h^\mathrm{dual}\|_\Omega$},{\tiny $\|u-\uhcg\|_\Omega$},{\tiny $\OO(\mathrm{dof}^{-1/4})$}},
legend pos=north east,
every axis plot/.append style={ultra thick},
]
\addplot table [x=dof,y=errUP1L2] {data/Example1_DGLM_errors.dat};
\addplot table [x=dof,y=errUP1L2] {data/Example1_DualDGLM_errors.dat};
\addplot table [x=dof,y=errUS1L2] {data/Example1_DualDGLM_errors.dat};
\addplot [black,dotted,mark=none] table [x=dof,y expr={sqrt(\thisrowno{1})^(-1/2)}] {data/Example1_DualDGLM_errors.dat};

\end{loglogaxis}
\end{tikzpicture}
  \end{center}
  \caption{Comparison between $L_2$ errors of the PHFEM, DHFEM solution and continuous Galerkin method for the example from Section~\ref{sec:numeric:ex1} with $\varepsilon=10^{-4}$.}
  \label{fig:exp1:errL2}
\end{figure}

\subsection{Example with unknown solution}
Let $\Omega = (-1,1)^2$ and define for $(x,y)\in \Omega)$, 
\begin{align*}
  f(x,y)  = \begin{cases}
    1 & \text{if } (x,y)\in (-1/2,1/2)^2, \\
    -1 &\text{else}.
  \end{cases}
\end{align*}
For this right-hand side datum we do not have an explicit representation of the solution $u$ to~\eqref{eq:model}. 
We use a simple adaptive loop which consists of the steps \textbf{Solve}, \textbf{Estimate}, \textbf{Mark}, \textbf{Refine}. 
To mark elements for refinement we use the bulk criterion: Find a (minimal) set $\mathcal{M}\subset \cT$ such that
\begin{align}\label{eq:bulk}
  \theta \est^2 \leq \sum_{T\in\mathcal{M}} \est(T)^2.
\end{align}
Here, $\est$ stands for an error estimator with local contributions $\est(T)$. For the PHFEM~\eqref{eq:dglm:disc} we use $\est=\rho$.
For this problem we expect the exact solution to have boundary layers and interior layers (at $\partial (-1/2,1/2)^2$). 
The adaptive algorithm seems to detect these layers and generates meshes that are highly refined in those regions.
We stress that we use the newest-vertex bisection algorithm for mesh-refinement which generates shape-regular triangulations.
Figure~\ref{fig:expUnknown:dglmFEM} shows $\Pi_h^0 u_h$ and $\uhcg$ on a locally refined mesh generated by the adaptive algorithm and with $\varepsilon=10^{-8}$. 
We find that $\Pi_h^0u_h$ does not show any visible oscillations, whereas $\uhcg$ does.
We note that similar observations are true for $\Pi_h^0u_h^\mathrm{dual}$ and using estimator $\est = \xi$. To keep a shorter presentation we do not report the results here. 

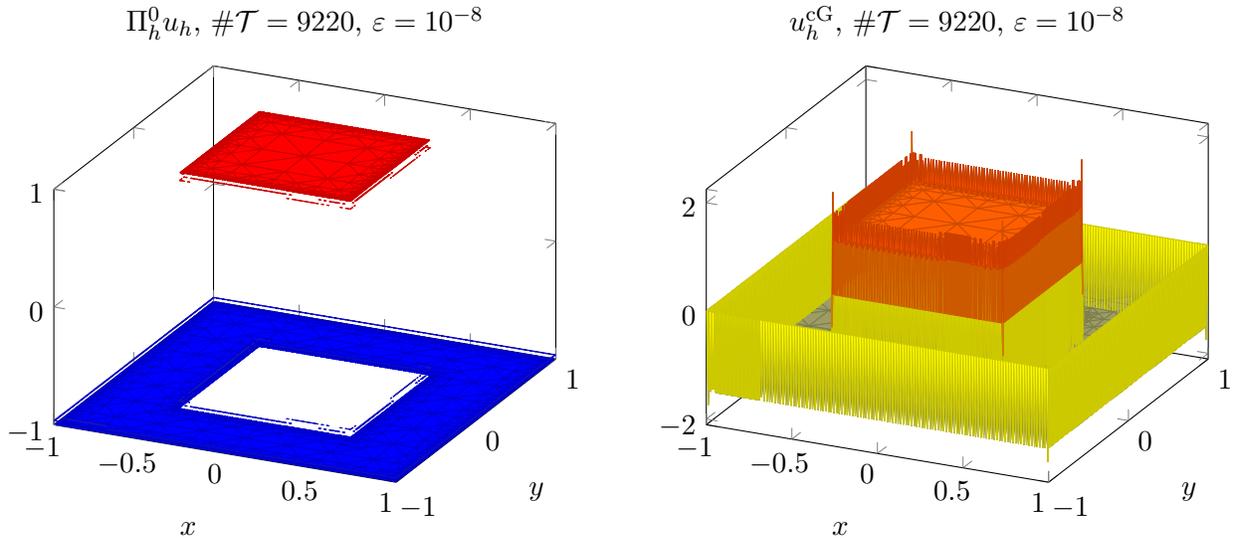
\begin{figure}
  \begin{center}
    \begin{tikzpicture}
  \begin{groupplot}[
      group style={group size=2 by 1, horizontal sep=2cm, vertical sep=2cm},
      width=0.5\textwidth,
      ylabel={$y$},
      xlabel={$x$},
    ]
    \nextgroupplot[title={$\Pi_h^0u_h$, $\#\cT=9220$, $\varepsilon=10^{-8}$},zmin=-1,zmax=1]
      \addplot3[patch] table{data/ExampleUnknown_DGLM_solP0_t1e-4_09153.dat};
      \nextgroupplot[title={$\uhcg$, $\#\cT=9220$, $\varepsilon=10^{-8}$}]
      \addplot3[patch] table{data/ExampleUnknown_DGLM_solS1_t1e-4_09153.dat};
  \end{groupplot}
\end{tikzpicture}
  \end{center}
  \caption{Comparison of the projected PHFEM solution component $\Pi_h^0u_h$ and the lowest-order continuous Galerkin solution $\uhcg$ on a locally refined mesh.}
  \label{fig:expUnknown:dglmFEM}
\end{figure}

Finally, Figure~\ref{fig:expUnknown:est} shows the error estimators for both proposed methods for $\varepsilon = 10^{-3}$, $\varepsilon = 10^{-4}$ on a sequence of uniformly refined meshes as well as adaptively refined meshes. Again, adaptive mesh-refinement is steered by estimator $\est = \rho$ resp. $\est=\xi$ and parameter $\theta =0.25$ in the bulk criterion~\eqref{eq:bulk}.
The use of adaptive mesh-refinements significantly reduces the pre-asymptotic convergence range. 
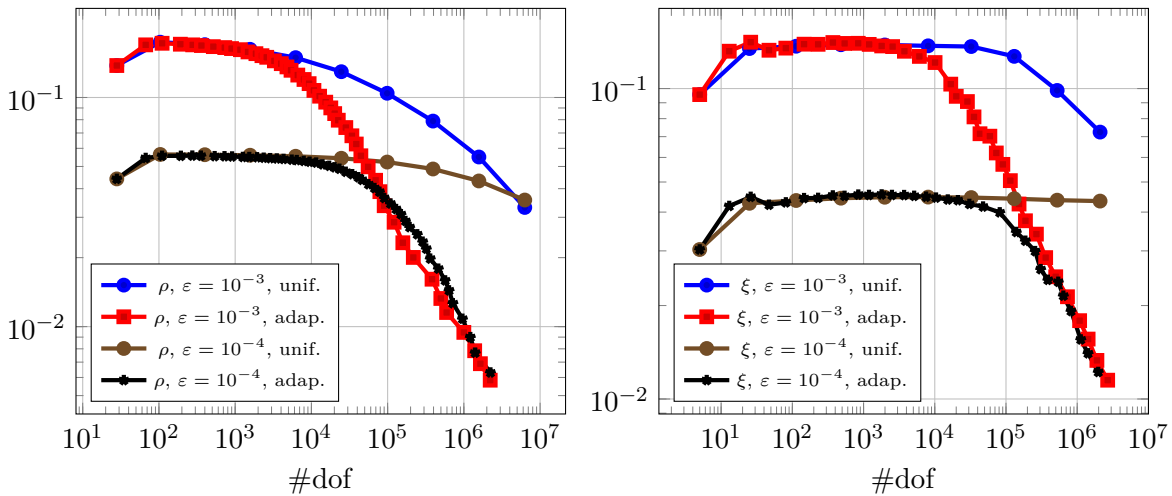
\begin{figure}
  \begin{center}
    \begin{tikzpicture}
\begin{loglogaxis}[
width=0.49\textwidth,
xlabel={$\#\mathrm{dof}$},
grid=major,
legend entries={{\tiny $\rho$, $\varepsilon=10^{-3}$, unif.},{\tiny $\rho$, $\varepsilon=10^{-3}$, adap.},{\tiny $\rho$, $\varepsilon=10^{-4}$, unif.},{\tiny $\rho$, $\varepsilon=10^{-4}$, adap.}},
legend pos=south west,
every axis plot/.append style={ultra thick},
]
\addplot table [x=dof,y=est] {data/ExampleUnknown_DGLM_est3unif.dat};
\addplot table [x=dof,y=est] {data/ExampleUnknown_DGLM_est3adap.dat};
\addplot table [x=dof,y=est] {data/ExampleUnknown_DGLM_est4unif.dat};
\addplot table [x=dof,y=est] {data/ExampleUnknown_DGLM_est4adap.dat};

\end{loglogaxis}
\end{tikzpicture}
\begin{tikzpicture}
\begin{loglogaxis}[
width=0.49\textwidth,
xlabel={$\#\mathrm{dof}$},
grid=major,
legend entries={{\tiny $\xi$, $\varepsilon=10^{-3}$, unif.},{\tiny $\xi$, $\varepsilon=10^{-3}$, adap.},{\tiny $\xi$, $\varepsilon=10^{-4}$, unif.},{\tiny $\xi$, $\varepsilon=10^{-4}$, adap.}},
legend pos=south west,
every axis plot/.append style={ultra thick},
]
\addplot table [x=dof,y=est] {data/ExampleUnknown_DualDGLM_est3unif.dat};
\addplot table [x=dof,y=est] {data/ExampleUnknown_DualDGLM_est3adap.dat};
\addplot table [x=dof,y=est] {data/ExampleUnknown_DualDGLM_est4unif.dat};
\addplot table [x=dof,y=est] {data/ExampleUnknown_DualDGLM_est4adap.dat};

\end{loglogaxis}
\end{tikzpicture}
  \end{center}
  \caption{Estimators for the PHFEM (left) and DHFEM (right) on a sequence of uniformly (unif.) and adaptively (adap.) refined meshes.}
  \label{fig:expUnknown:est}
\end{figure}

\section{Conclusions}
Two hybrid finite element methods have been presented and analyzed. 
The first one is based on a primal formulation and the second one is based on a dual formulation.
To achieve uniform stability on shape-regular simplicial meshes, i.e. $\inf$--$\sup$ constants independent of the mesh-size and the singular perturbation parameter $\varepsilon$, we enriched the local spaces by modified face bubble functions. These function are equal to the standard polynomial face bubble functions on element boundaries but decay exponentially in the interior of elements. An alternative for defining modified face bubble functions involving only polynomial functions has been presented as well. 

For both methods the algebraic system to solve is of the dimension of the (discrete) Lagrange multiplier space, hence, comparable to common finite element methods.
Numerical experiments show no significant spurious oscillations which is also reflected in much smaller errors compared to the continuous Galerkin method. 

A posteriori error estimators are derived that control the error in the naturally induced norms of the problem independent of the singular perturbation parameter $\varepsilon$. 
Numerical experiments indicate that adaptive mesh-adaptation based on these estimators detects boundary and interior layers and is more efficient compared to uniform mesh-refinements. 

For future works we plan to study non-polynomial discretizations for the Lagrange multiplier spaces.
Furthermore, we like to investigate if some techniques developed in this work can be applied to other singularly perturbed problems, e.g., advection-dominated diffusion problems, as well as to more 
general multiscale problems,  for instance by setting highly oscillatory data parameters.

\bibliographystyle{alpha}
\bibliography{literature}
\end{document}